\documentclass[12pt]{amsart}
\usepackage{hyperref}
\usepackage[margin=25mm]{geometry}
\IfFileExists{lmodern.sty}{\usepackage[T1]{fontenc}\usepackage{lmodern}}{%
 \DeclareTextCommand{\dh}{OT1}{\leavevmode\setbox0\hbox{d}%
 \rlap{\raise.6\ht0\hbox to\wd0{\hss\vrule width.32em height.045em\kern.02em}}d}}
\usepackage{microtype}
\usepackage{amsmath,amssymb,amsthm,mathtools}
\usepackage{enumitem}
\usepackage{hyperref}
\hypersetup{colorlinks=true,linkcolor=black,citecolor=black,urlcolor=blue}
\allowdisplaybreaks[1]
\numberwithin{equation}{section}
\setlist[enumerate]{itemsep=3pt,topsep=4pt}

\newtheorem{theorem}{Theorem}[section]
\newtheorem{proposition}[theorem]{Proposition}
\newtheorem{lemma}[theorem]{Lemma}
\newtheorem{corollary}[theorem]{Corollary}
\newtheorem{mainthm}{Theorem}

\theoremstyle{definition}
\newtheorem{example}[theorem]{Example}
\newtheorem{remark}[theorem]{Remark}

\newcommand{\C}{\mathbb C}
\newcommand{\R}{\mathbb R}
\newcommand{\N}{\mathbb N}
\newcommand{\Q}{\mathbb Q}
\newcommand{\T}{\mathbb T}
\newcommand{\cP}{\mathcal P}
\newcommand{\cB}{\mathcal B}
\newcommand{\cR}{\mathcal R}
\DeclareMathOperator{\Log}{Log}
\DeclareMathOperator{\conv}{conv}
\DeclareMathOperator{\pr}{pr}
\DeclareMathOperator{\spn}{span}

\title{Oka-Weil approximation theorem for polynomials with exponents\\
in compact convex sets in \texorpdfstring{$\mathbb R^n$}{R^n}}
\author{Duong Thi Thuy Van and Kieu Phuong Chi}
\date{\today}

\address{Duong Thi Thuy Van,  \newline
Faculty of Natural Science Education, Dong Nai University, Vietnam}
\email{\href{mailto:vandtt@dnpu.edu.vn}{vandtt@dnpu.edu.vn}}

\address{Kieu Phuong Chi,  \newline
Department of Mathematics and Applications, Saigon  University, 273  An Duong Vuong, Ho Chi Minh City, Vietnam}
\email{\href{mailto:kieuphuongchi@sgu.edu.vn}{kieuphuongchi@sgu.edu.vn}}
\begin{document}
\subjclass[2020]{32E30, 32E20, 32A08}
\keywords{ Oka--Weil theorem, polynomial convexity, sparse
polynomials, convex cones, Oka-Stolzenberg criterion.}

\begin{abstract}
Let $S\subset\R^n_+$ be a compact convex set with $0\in S$, and let
$\cP^S$ be the ring of polynomials whose exponents of the $m$-th degree
lie in the dilates $mS$. Sigurðardóttir proved an Oka--Weil theorem for
compact $\cP^S$-convex subsets of $(\C^*)^n$ and asked whether it holds for
all compact $\cP^S$-convex subsets of $\C^n$. We answer this question
affirmatively when the cone $\R_+S$ is rational polyhedral, for every $S$
when $n=2$, and for several classes of nonpolyhedral cones in all
dimensions; in all these cases the compact sets may meet the coordinate
hyperplanes. For arbitrary $S$ we prove that the Oka--Weil property of a
compact $\cP^S$-convex set is equivalent to the compactness of its hull
with respect to a finitely generated monomial subring of $\cP^S$, to a
corona-type certificate, and to the existence of a bounded neighbourhood
defined by finitely many inequalities $|p_j|<c_j$ with $p_j\in\cP^S$.
The proofs use monomial charts, the classical Oka--Weil theorem in an
auxiliary affine space, and elementary Gelfand theory. As applications we
obtain an Oka--Stolzenberg criterion for $\cP^S$-hulls and the
corresponding rational approximation theorem.
\end{abstract}
\maketitle

\section{Introduction}

The Oka--Weil theorem states that a function holomorphic in a neighbourhood
of a compact polynomially convex set $K\subset\C^n$ can be approximated
uniformly on $K$ by polynomials, and its rational version gives
approximation by rational functions without poles on $K$ when $K$ is
rationally convex. The polynomial version is due to Oka \cite{Oka} and
Weil \cite{Weil}; see \cite{Hor,Stout} for modern expositions of both
versions. Stolzenberg \cite{Stolz} later
characterized the polynomial and the rational hull of a compact set by
families of complex hypersurfaces, and the Oka--Weil theorem is the
analytic ingredient of his criterion. The purpose of this paper is to
establish the Oka--Weil theorem, and then Stolzenberg's criterion, when the
ring of all polynomials is replaced by a ring of polynomials whose
exponents are restricted by a compact convex set.

Let $S\subset\R^n_+$ be compact and convex with $0\in S$. For an integer
$m\ge0$ let $\cP^S_m$ be the space of all polynomials of the form
\[
 p(z)=\sum_{\alpha\in mS\cap\N^n}c_\alpha z^\alpha ,
\]
and let
\[
 \cP^S=\bigcup_{m\ge0}\cP^S_m .
\]
When $S$ is the standard simplex, $\cP^S_m$ is the space of polynomials of
degree at most $m$; other choices of $S$ give sparse polynomial spaces
which are useful in multivariate approximation \cite{BL,Tref}, and a
systematic pluripotential theory for these spaces has been developed by
Magnússon, Sigurðardóttir, Sigurðsson and Snorrason \cite{MSSS,MSS,MSiS}.
Since $\cP^S_k\cP^S_m\subset\cP^S_{k+m}$, the space $\cP^S$ is a ring; it
consists of the polynomials whose exponents lie in the convex cone
$\Gamma_S=\R_+S$, and it is a proper subring of $\C[z_1,\ldots,z_n]$ unless
$S$ is a neighbourhood of $0$ in $\R^n_+$. The hull of a compact set $K$
with respect to $\cP^S$ is
\[
 \widehat K^S=\bigl\{z\in\C^n:\ |p(z)|\le\|p\|_K\ \text{for every }
 p\in\cP^S\bigr\},
 \qquad \|p\|_K=\sup_K|p|,
\]
and $K$ is called \emph{$S$-convex} if $\widehat K^S=K$. The hull
$\widehat K^S$ and the term $S$-convex are those of Sigurðardóttir
\cite{Sig}, and the description of $\cP^S$ as the ring of polynomials with
exponents in the cone $\R_+S$ is taken from \cite[Section~1]{Sig}.

Sigurðardóttir \cite[Theorem~1.1]{Sig} proved, by Hörmander's $L^2$
method with weights constructed from Siciak--Zakharyuta functions, that
every function holomorphic in a neighbourhood of a compact $S$-convex set
$K\subset(\C^*)^n$ is a uniform limit on $K$ of elements of $\cP^S$. She
asked whether the same is true for all compact $S$-convex subsets of
$\C^n$ \cite[Section~1]{Sig}. The restriction to the torus is not a technical one, as the following
example, which is also taken from \cite[Section~1]{Sig}, shows. For
\[
 S=\conv\{0,e_1,e_1+e_2\}\qquad\text{we have}\qquad
 \cP^S=\C[z_1,\,z_1z_2],
\]
so every element of $\cP^S$ is constant on the line $\{z_1=0\}$, and a
compact $S$-convex set cannot meet this line. On the other hand, the set
\[
 \bigl\{z\in\C^2:\ |z_1-2|\le1,\ |z_1z_2|\le1\bigr\}
\]
is compact, $S$-convex, and meets the line $\{z_2=0\}$. The question is
whether the Oka--Weil theorem holds on sets of this kind, which meet some
coordinate hyperplanes but not others.

Our approach rests on an elementary observation. If $z^\beta$ and
$z^{\beta+e_1},\ldots,z^{\beta+e_n}$ belong to $\cP^S$, then on the set
$\{z^\beta\ne0\}$ every coordinate is a quotient of two elements of
$\cP^S$:
\[
 z_j=\frac{z^{\beta+e_j}}{z^\beta},\qquad j=1,\ldots,n.
\]
We call such exponents $\beta$ \emph{good}, denote their set by $G_S$,
and put
\[
 U_S=\bigcup_{\beta\in G_S}\{z\in\C^n:\ z^\beta\ne0\}.
\]
A compact $S$-convex set is always contained in $U_S$
(Corollary~\ref{cor:goodcover}); this is the place where the convexity of
$S$, through the saturation of the semigroup $\R_+S\cap\N^n$, is used. The
remaining difficulty is of a global nature. The $S$-convexity of $K$
controls point evaluations at points of $\C^n$, but the uniform closure of
$\cP^S$ on $K$ may a priori have characters that are not point
evaluations. When $\R_+S$ is rational polyhedral, these characters
correspond to points of the affine toric variety
$\operatorname{Spec}\cP^S$ that do not lie in the image of $\C^n$. We show
that the classical Oka--Weil theorem in an auxiliary space $\C^N$, combined
with the most basic facts of Gelfand theory, excludes such characters for
finitely generated monomial algebras (Theorem~\ref{thm:chart}). Unlike the proof of \cite[Theorem~1.1]{Sig}, the argument does not use
the $L^2$ method. Its two ingredients are classical: the formula for $z_j$ above is the localization of a semigroup
algebra at a monomial, which in the rational polyhedral case describes the
affine charts of $\operatorname{Spec}\cP^S$ \cite[Sections~1.1--1.3]{CLS},
and the passage to an ambient space $\C^N$ through a finite set of
generators is the standard description of the spectrum of a finitely
generated uniform algebra \cite{Gam,Stout}. The good charts allow us to combine the two without any global
information on $\operatorname{Spec}\cP^S$. Our first result answers Sigurðardóttir's question for rational
polyhedral cones.

\begin{mainthm}\label{thmA}
Let $S\subset\R^n_+$ be compact and convex with $0\in S$, and assume that $\R_+S$ is a rational polyhedral cone, that is, the set
of nonnegative linear combinations of finitely many vectors with rational
coordinates. If $K\subset\C^n$ is compact and $S$-convex, then every function holomorphic in a neighbourhood of $K$ can
be approximated uniformly on $K$ by elements of $\cP^S$. Moreover, a
compact set $K$ is $S$-convex if and only if $K\subset U_S$ and the image
of $K$ under a monomial map defined by generators of $\cP^S$ is
polynomially convex.
\end{mainthm}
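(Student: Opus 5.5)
The plan is to reduce everything to the classical Oka--Weil theorem in an auxiliary affine space through a monomial map. Since $\R_+S$ is rational polyhedral, Gordan's lemma makes the saturated semigroup $\R_+S\cap\N^n$ finitely generated. Choose generators $\alpha^1,\dots,\alpha^N$ and set
\[
 \Phi(z)=(z^{\alpha^1},\dots,z^{\alpha^N}):\C^n\to\C^N .
\]
Then $\cP^S=\{q\circ\Phi:\ q\in\C[w_1,\dots,w_N]\}$. For compact $K$ this gives $\widehat K^S=\Phi^{-1}\bigl(\widehat{\Phi(K)}\bigr)$, where the hat on the right denotes the ordinary polynomial hull in $\C^N$. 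The whole argument rests on this identity together with the local invertibility of $\Phi$ on $U_S$.

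We first establish the characterization.
\begin{enumerate}
 \item By Corollary~\ref{cor:goodcover}, an $S$-convex $K$ lies in $U_S$.
 \item To show $\Phi(K)$ is polynomially convex, take $w\in\widehat{\Phi(K)}$. Evaluation at $w$ is a character of the uniform closure $A$ of $\cP^S$ on $K$, and we must show it is induced by a point of $K$.
 \item Fix a good $\beta$ with $z^\beta\ne0$ somewhere on $K$. The monomials $z^\beta$ and $z^{\beta+e_j}$ are polynomials $q_0,q_j$ in the $w$-coordinates. If $q_0(w)\ne0$, put $\zeta_j=q_j(w)/q_0(w)$. Multiplicativity of the character gives $z^\alpha(\zeta)=(z^\alpha\circ\Phi^{-1})(w)$ for every generator: indeed $z^{\alpha}z^{\beta k}$ is a polynomial in the $z^{\beta+e_j}$ with total degree $|\alpha|$ over $z^{\beta\cdot}$ powers, after multiplying by a high enough power of $z^\beta$. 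Hence $\Phi(\zeta)=w$, so $\zeta\in\widehat K^S=K$.
 \item It remains to rule out $q_0(w)=0$ for every good $\beta$. Here we use compactness of $K$ in $U_S$. Pick finitely many good $\beta_1,\dots,\beta_r$ with $K\subset\bigcup\{z^{\beta_i}\ne0\}$. The functions $z^{\beta_i}$ have no common zero on $K$, and it is not obvious that they have none on $\widehat{\Phi(K)}$. This is the main obstacle (the ``Theorem~\ref{thm:chart}'' step).
 \item To handle it, first try a corona-type argument. Use the equality $\widehat K^S=K$ and the maximum principle on the set $\{w\in\widehat{\Phi(K)}:\ q_{\beta_i}(w)=0\ \forall i\}$, a closed subset of the hull. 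Show it is empty by producing, via Rossi's local maximum principle, a point where some $|q_{\beta_i}|$ is small but positive, which is then a genuine point of $K$ by step 3. Continuity and compactness of $K$ give a contradiction.
 \item Conversely, if $K\subset U_S$ and $\Phi(K)$ is polynomially convex, the identity above gives $\widehat K^S=\Phi^{-1}(\Phi(K))$. A point $z$ there with $\Phi(z)=\Phi(k)$ satisfies $z^\beta\ne0$ for a good $\beta$ with $k^\beta\ne0$, and the quotient formula gives $z=k$.
\end{enumerate}

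Approximation then follows. On $\{z^\beta\ne0\}$ the map $\Phi$ has the holomorphic left inverse $w\mapsto(q_j(w)/q_0(w))_j$. Hence $\Phi$ restricted to a neighbourhood of $K$ is a biholomorphism onto a closed-in-a-neighbourhood submanifold piece near $\Phi(K)$. Given $f$ holomorphic near $K$, define $g=f\circ\Phi^{-1}$ on $\Phi(V)\cap W$, locally given by $f(q/q_0)$, which is holomorphic in an open set of $\C^N$ about each point of $\Phi(K)$. These local extensions agree on $\Phi(V)$ but need not glue. To fix this, work on the compact polynomially convex set $\Phi(K)$ and use that a polynomially convex compact set has a neighbourhood basis of polynomial polyhedra. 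On such a polyhedron, $\Phi(V)\cap W$ is a closed subvariety (its Zariski closure $\overline{\Phi(\C^n)}$ is locally $\Phi(V)$ near $\Phi(K)$ by injectivity). Cartan's Theorem B then extends $g$ to a holomorphic function on a neighbourhood of $\Phi(K)$. Classical Oka--Weil gives polynomials $q$ with $q\to g$ uniformly on $\Phi(K)$, so $q\circ\Phi\in\cP^S$ approximates $f$ on $K$.
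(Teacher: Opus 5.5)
Your proposal is correct in outline, and it departs from the paper at two points.

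\textbf{Excluding $E$.} To show that the set $E$ of points of $\widehat{\Phi(K)}$ where all good monomials vanish is empty, you invoke Rossi's local maximum modulus principle. The paper uses only Lemma~\ref{lem:nopieces}: the classical Oka--Weil theorem applied to the locally constant function equal to $0$ near $\Phi(K)$ and $1$ near $E$, which is a special case of Shilov's idempotent theorem. Your version works, but you should state why the principle applies. By your step~3, every point of $\widehat{\Phi(K)}\setminus E$ lies in $\Phi(K)$, and $\max_i|q_{\beta_i}|\ge c>0$ on $\Phi(K)$. Hence $E$ is open and closed in $\widehat{\Phi(K)}$ and disjoint from $\Phi(K)$, and the local maximum principle with $U=E$, whose boundary is empty, gives $E=\varnothing$.

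\textbf{Approximation.} You work in $\C^N$: you extend $f\circ\Phi^{-1}$ off a submanifold by Cartan's Theorem~B and then apply Oka--Weil to $\Phi(K)$. The paper stays in $\C^n$. Since $\widehat{\Phi(K)}=\Phi(K)$, every character of the closure $A$ is a point evaluation on $K$. Then (G3) gives $h_k\in A$ with $\sum_k z^{\beta^k}h_k=1$ on $K$, so $z_j=\sum_k z^{\beta^k+e_j}h_k\in A$. As $\widehat K\subset\widehat K^S=K$, the classical Oka--Weil theorem in $\C^n$ finishes.

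The paper's route needs no sheaf cohomology. More importantly, its Gelfand-theoretic formulation (characters are point evaluations, hence a corona-type certificate) is exactly what it reuses for non-polyhedral cones in Theorem~\ref{thm:B}. Your route is the standard argument for Oka--Weil on Stein subvarieties. It makes the geometry visible, namely a smooth piece of the toric variety near $\Phi(K)$, at the cost of heavier machinery.

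\textbf{A step that needs a better reason.} In your route, that $\Phi(V)$ is a closed submanifold of a neighbourhood of $\Phi(K)$ does not follow from injectivity: an injective immersion can have an image that is not locally closed. It follows instead from the left inverse $\rho_\beta(w)=(q_j(w)/q_0(w))_j$ being defined and holomorphic on the open set $\{q_0\ne0\}\subset\C^N$. If $\Phi(z^{(\nu)})\to w$ with $q_0(w)\ne0$, then $z^{(\nu)}=\rho_\beta(\Phi(z^{(\nu)}))\to\rho_\beta(w)$. Hence the closure of $\Phi(\C^n)$ meets $\{q_0\ne0\}$ exactly in $\Phi(\{z^\beta\ne0\})=\{w:\ q_0(w)\ne0,\ \Phi(\rho_\beta(w))=w\}$. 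This is a closed submanifold of $\{q_0\ne0\}$, on which $\Phi^{-1}=\rho_\beta$ is holomorphic; equivalently, apply your step~3 to points of the Zariski closure.

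Consequently $\Phi(V)$ is closed in the union of the finitely many sets $\{q_0\ne0\}$ attached to $\beta_1,\dots,\beta_r$. A polynomial polyhedron around $\Phi(K)$ inside this union, and inside the region where $f\circ\Phi^{-1}$ is defined, is then a Stein neighbourhood on which Theorem~B gives the extension.
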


For general $S$ the ring $\cP^S$ need not be finitely generated, and we
reduce the problem to finite stages. A \emph{sparse polyhedron} around a
compact set $K$ is a set of the form
\begin{equation}\label{eq:polyhedron}
 D=\bigl\{z\in\C^n:\ |p_j(z)|<c_j,\ j=1,\ldots,N\bigr\},
 \qquad p_j\in\cP^S,\quad c_j>\|p_j\|_K .
\end{equation}

\begin{mainthm}\label{thmB}
Let $S\subset\R^n_+$ be compact and convex with $0\in S$, and let
$K\subset\C^n$ be a nonempty compact $S$-convex set. The following
conditions are equivalent.
\begin{enumerate}[label=\textup{(\roman*)}]
\item Every function holomorphic in a neighbourhood of $K$ is a uniform
limit on $K$ of elements of $\cP^S$.
\item There is a finitely generated monomial subring $\cB\subset\cP^S$
such that $\widehat K^{\cB}$ is compact.
\item There are $\beta^1,\ldots,\beta^r\in G_S$ and
$q_1,\ldots,q_r\in\cP^S$ such that
\[
 \Bigl\|\,1-\sum_{k=1}^r z^{\beta^k}q_k\Bigr\|_K<1 .
\]
\item $K$ is contained in a bounded sparse polyhedron.
\end{enumerate}
\end{mainthm}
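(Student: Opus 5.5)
We plan to prove the cycle (ii)$\Rightarrow$(i)$\Rightarrow$(iii)$\Rightarrow$(iv)$\Rightarrow$(ii). We use two facts announced in the introduction. The first is Corollary~\ref{cor:goodcover}: a compact $S$-convex set lies in $U_S$. The second is Theorem~\ref{thm:chart}, which we read as follows: if $\cB\subset\cP^S$ is a finitely generated monomial subring containing $z^\beta$ and $z^{\beta+e_1},\dots,z^{\beta+e_n}$ for enough good $\beta$, and $\widehat K^{\cB}$ is compact, then the uniform closure of $\cB$ on $K$ has only point evaluations as characters, and Oka--Weil holds for $\cB$.

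\textbf{(ii)$\Rightarrow$(i).} Let $\cB=\C[z^{\alpha^1},\dots,z^{\alpha^N}]$ with $\widehat K^{\cB}$ compact. Since $K\subset U_S$ is compact, finitely many good exponents $\beta^1,\dots,\beta^r$ satisfy $K\subset\bigcup_k\{z^{\beta^k}\ne0\}$. Enlarge $\cB$ by the monomials $z^{\beta^k}$ and $z^{\beta^k+e_j}$. The hull of the enlarged ring is contained in the old one, so it is still compact.

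Set $\Phi=(z^{\alpha})\colon\C^n\to\C^{N'}$ and $L=\Phi(\widehat K^{\cB})$; then $L$ is polynomially convex. Take $f$ holomorphic near $K$. Near a point of $K$ with $z^{\beta^k}\ne0$, the coordinates $z_j=\Phi_{\beta^k+e_j}/\Phi_{\beta^k}$ show that $\Phi$ is injective there and has a holomorphic left inverse. Hence $f=F\circ\Phi$ locally, with $F$ holomorphic near $\Phi(K)$.

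We must also know that $\widehat{\Phi(K)}=\Phi(K)$, i.e.\ that $K$ is $\cB$-convex and not merely $S$-convex. Here we apply Theorem~\ref{thm:chart}: the compactness of $\widehat K^{\cB}$ excludes nonpoint characters. Any remaining gap between $\widehat K^{\cB}$ and $K=\widehat K^S$ is handled by adding finitely many separating elements of $\cP^S$ to $\cB$, using compactness of $\widehat K^{\cB}\setminus(\text{nbhd of }K)$. Classical Oka--Weil on $\Phi(K)$ then approximates $F$ by polynomials, and pulling back gives elements of $\cP^S$.

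This step is the main obstacle. It requires gluing the local inverses into a single holomorphic $F$ on a neighbourhood of $\Phi(K)$. We do this by showing that $\Phi$ is a biholomorphism of a neighbourhood of $K$ onto a closed submanifold near $\Phi(K)$, and then extending $F$ via Cartan's Theorem B on a Stein neighbourhood of $\Phi(K)$.

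\textbf{(i)$\Rightarrow$(iii).} By Corollary~\ref{cor:goodcover}, pick good $\beta^1,\dots,\beta^r$ with $K\subset\bigcup_k\{z^{\beta^k}\ne0\}$. The ideal generated by the $z^{\beta^k}$ in $\mathcal O(W)$, for a neighbourhood $W$ of $K$ where these monomials have no common zero, contains $1$. To see this, solve the Cousin-type problem on a Stein neighbourhood basis: $K$ is polynomially convex, since it is $S$-convex and $\cP^S\subset\C[z]$. This gives $h_k$ holomorphic near $K$ with $\sum z^{\beta^k}h_k=1$. Approximate each $h_k$ by $q_k\in\cP^S$ using (i); the norm in (iii) is then small.

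\textbf{(iii)$\Rightarrow$(iv).} Put $g=1-\sum z^{\beta^k}q_k$, so $\|g\|_K<1$. Consider the sparse polyhedron defined by $|g|<c_0$ with $\|g\|_K<c_0<1$, together with $|z^{\beta^k}|<c$, $|z^{\beta^k+e_j}|<c$ and $|q_k|<c$. On it, $|\sum z^{\beta^k}q_k|>1-c_0$, so some $|z^{\beta^k}|\ge(1-c_0)/(rc)$. Then $|z_j|=|z^{\beta^k+e_j}|/|z^{\beta^k}|\le rc^2/(1-c_0)$, and the polyhedron is bounded.

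\textbf{(iv)$\Rightarrow$(ii).} Let $\cB$ be generated by all monomials occurring in $p_1,\dots,p_N$. Then $\widehat K^{\cB}\subset\{|p_j|\le\|p_j\|_K\}$, which lies in the bounded polyhedron $D$. As a hull it is closed, hence compact.
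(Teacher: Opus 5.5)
Your implications (iii)$\Rightarrow$(iv) and (iv)$\Rightarrow$(ii) are the paper's. Your (i)$\Rightarrow$(iii) is also correct, by a different route. Since $\widehat K\subset\widehat K^S=K$, the set $K$ has polynomial-polyhedron (hence Stein) neighbourhoods inside $\bigcup_k\{z^{\beta^k}\ne0\}$. Cartan's Theorem~B produces the $h_k$ there, and (i) approximates them. The paper obtains the $h_k$ more cheaply in $P(K)$ from (G3).

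The gap is in (ii)$\Rightarrow$(i), and it has three parts.
\begin{enumerate}
\item You read Theorem~\ref{thm:chart} as needing only compactness of $\widehat K^{\cB}$. Its actual hypotheses are that the compact set is itself $\cB$-convex and lies in $U_\cB$. When $\widehat K^\cB\supsetneq K$ the conclusion fails: evaluation at $w\in\widehat K^\cB\setminus K$ is a character of $A_\cB(K)$, and because $K\subset U_\cB$ it agrees with no evaluation at a point of $K$.
\item For the same reason, your assertion that $L=\Phi(\widehat K^\cB)$ is polynomially convex is unsupported. The hull $\widehat L=\widehat{\Phi(K)}$ can contain points at which all good coordinates $w_{\beta^k}$ vanish. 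Lemma~\ref{lem:nopieces} disposes of these only when that set is disjoint from $L$, that is, only when $\widehat K^\cB\subset\bigcup_k\{z^{\beta^k}\ne0\}$, which you have not arranged.
\item The compactness argument you invoke, with finitely many $p_w\in\cP^S$ separating the points of $\widehat K^\cB\setminus W$ from $K$, only gives $\widehat K^{\cB'}\subset W$ for the enlarged algebra $\cB'$. It does not make $K$ itself $\cB'$-convex, so Oka--Weil on $\Phi(K)$ is not yet available.
\end{enumerate}

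The missing idea is to apply the chart theorem to $K'=\widehat K^{\cB'}$ rather than to $K$. First choose $W\subset\Omega\cap\bigcup_k\{z^{\beta^k}\ne0\}$, where $\Omega$ is the domain of $f$. Then $K'$ is compact, $\cB'$-convex (because $\|b\|_{K'}=\|b\|_K$ for $b\in\cB'$), and contained in $U_{\cB'}$. Theorem~\ref{thm:chart}(a) therefore gives $f\in A_{\cB'}(K')$, and restricting to $K\subset K'$ finishes. Up to replacing the exhaustion $\cB_j$ by a finite cover, this is the paper's proof of (v)$\Rightarrow$(i) in Theorem~\ref{thm:B}.

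Your desired $\cB'$-convexity of $K$ does hold a posteriori. Every polynomial $q$ lies in $A_{\cB'}(K')$, so $|q(w)|\le\|q\|_K$ for $w\in K'$, whence $K'\subset\widehat K=K$. But this follows from the chart theorem applied to $K'$, not from compactness.

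What you call the main obstacle is not one. Once $\Phi(K')$ is known to be polynomially convex, your extension via Cartan's Theorem~B works. It is, however, a heavy substitute for the end of the proof of Theorem~\ref{thm:chart}(a), where $z_j=\sum_k z^{\beta^k+e_j}h_k$ puts the coordinates in $A_{\cB'}(K')$ and the classical Oka--Weil theorem is applied directly in $\C^n$. The real difficulty is excluding the points of $\widehat{\Phi(K')}$ at which all good coordinates vanish, and that is what Lemmas~\ref{lem:chartlemma} and~\ref{lem:nopieces} do.
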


Condition (iii) is a condition of corona type: by elementary Gelfand
theory it says that the spectrum of the uniform closure of $\cP^S$ on $K$
is $K$ itself, and no point of a ``corona'' outside $K$ is present
(Remark~\ref{rem:open}). Thus Sigurðardóttir's question is equivalent to
the assertion that every
compact $S$-convex set lies in a bounded set defined by finitely many
inequalities $|p_j|<c_j$ with $p_j\in\cP^S$. We verify these conditions in
several cases.

\begin{mainthm}\label{thmC}
Let $S\subset\R^n_+$ be compact and convex with $0\in S$, and let $K$ be a
compact $S$-convex set. The conditions of Theorem~\ref{thmB} hold, and
hence the Oka--Weil theorem holds on $K$, in each of the following cases:
\begin{enumerate}[label=\textup{(\alph*)}]
\item $n=2$;
\item $K\subset(\C^*)^n$;
\item there is a nonempty proper subset $I\subset\{1,\ldots,n\}$ such that
 \begin{itemize}
 \item $e_i\in\R_+S$ for every $i\in I$;
 \item for all $i\in I$ and $k\notin I$ there is an integer $m\ge1$ with
 $me_i+e_k\in\R_+S$;
 \item every nonzero $\alpha\in\R_+S\cap\N^n$ satisfies
 $\alpha-\delta e_i\in\R_+S$ for some $i\in I$ and some $\delta>0$.
 \end{itemize}
\end{enumerate}
Moreover, for arbitrary $n$ and $S$ the Oka--Weil theorem holds on every
compact $S$-convex set contained in a coordinate subspace of dimension at
most two.
\end{mainthm}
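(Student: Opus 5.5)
Throughout I take Theorem~\ref{thmB} as available and, in each case, verify condition (iv): I will build a bounded sparse polyhedron around $K$. I use two earlier facts. First, $K\subset U_S$ (Corollary~\ref{cor:goodcover}). Second, for any $p\in\cP^S$ and any $z\notin K$ there is $q\in\cP^S$ with $|q(z)|>\|q\|_K$.

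Boundedness is always the issue near coordinate hyperplanes. Away from them, the quotients $z_j=z^{\beta+e_j}/z^\beta$ with $\beta\in G_S$ already control the coordinates.

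\textbf{Case (b), $K\subset(\C^*)^n$.} Here I use the monomials $z^{\beta}$ and $z^{\beta+e_j}$ for a single good $\beta$ in the interior of the cone. Such a $\beta$ exists: if $\R_+S$ has nonempty interior, take any lattice point deep inside. Otherwise $\cP^S$ only sees a subtorus, and I would reduce to the span, using that $S$-convexity forces $K$ to be a fibre-like set there.

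Since $|z^\beta|\ge\delta>0$ on $K$, I want the polyhedron to contain the conditions $|z^{\beta+e_j}|<c$ together with a lower bound on $|z^\beta|$. A lower bound is not of the form $|p|<c$, so I replace it as follows. Choose finitely many $\beta^k$, $q_k$ with $\|1-\sum z^{\beta^k}q_k\|_K<1$, which is condition (iii). I expect to obtain these by compactness of $K$ and Stone--Weierstrass-type density of $\cP^S$ on the compact torus part. The main work is producing such a corona certificate. My first attempt is a Runge-type argument: approximate $1/z^\beta$ on $K$ by elements of $\cP^S$, in the spirit of Sigurðardóttir's theorem, which applies in $(\C^*)^n$. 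Then take $q=$ that approximant.

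\textbf{Case (c).} The coordinates $z_i$, $i\in I$, lie in $\cP^S$, so they are bounded on the polyhedron. For $k\notin I$, the monomials $z_i^mz_k$ bound $z_k$ wherever some $z_i$, $i\in I$, is bounded away from zero.

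The third hypothesis says every nonconstant monomial in $\cP^S$ is divisible, within the cone, by some $z_i$ with $i\in I$. Hence on the set $\{z_i=0\ \forall i\in I\}$ every $p\in\cP^S$ is constant. An $S$-convex $K$ therefore misses this set, so $\sum_{i\in I}|z_i|^{2m}\ge\delta$ on $K$.

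I then write the certificate $1-\sum_i z_i^m\cdot(\text{approximant of }\bar z_i^m/\sum)$. To avoid conjugates, I instead use (iii) directly: a polynomial $h$ in $z_i$, $i\in I$, with $\|1-\sum z_i^{m}h_i\|_K<1$. This exists by the classical Oka--Weil theorem for the projection of $K$ to $\C^I$, whose polynomial hull avoids $0$, since $K$ is $S$-convex and the $z_i$ generate the relevant functions. Finally, $z_i^m$ is good once $m$ is large, by the second hypothesis.

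\textbf{Case (a) and the final statement.} For $n=2$, the cone is either rational polyhedral, handled by Theorem~\ref{thmA}, or has an irrational edge. In the irrational case I split according to the two edges. An edge on an axis makes case (c) applicable with $I$ equal to that axis. An irrational edge in the open quadrant forces $K$ to avoid both axes, reducing to case (b). I check these three subcases by hand.

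For sets in a coordinate subspace $L$ of dimension at most two, I restrict $\cP^S$ to $L$. This gives $\cP^{S'}$ with $S'$ the projection of $S\cap L$, a compact convex set. I then apply case (a) or the one-dimensional case, and lift the polyhedron by adding $|z_k|$-type bounds for the killed coordinates. Making the lift legitimate is the second obstacle, since those $z_k$ need not lie in $\cP^S$. I would handle it by noting that $K$ is $S$-convex in $\C^n$ only if $\cP^S$ contains monomials bounding the transverse coordinates near $K$.
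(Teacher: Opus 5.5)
Your case (c) has a genuine gap at the claim that the polynomial hull of $\pr_I(K)$ in $\C^I$ avoids $0$. $S$-convexity and the third hypothesis do give $K\cap\{z_i=0,\ i\in I\}=\varnothing$, that is, $0\notin\pr_I(K)$. They do not give $0\notin\widehat{\pr_I(K)}$, and that claim is false in general.

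Here is a counterexample. Take $n=2$, $\Gamma_\pi=\{x\in\R^2_+:\ x_1\ge\pi x_2\}$ and $I=\{1\}$; all three hypotheses hold by Example~\ref{ex:pi}. Let $K=\{(e^{i\theta},e^{-4i\theta}):\ \theta\in\R\}$.
\begin{itemize}
\item The monomials $z_1$, $z_1^4z_2$, $z_1^7z_2^2$ lie in $\cP^{\Gamma_\pi}$, since $4\ge\pi$ and $7\ge2\pi$. On $K$ they equal $e^{i\theta}$, $1$, $e^{-i\theta}$.
\item Hence every $w\in\widehat K^{\Gamma_\pi}$ satisfies $w_1^4w_2=1$, $|w_1|\le1$ and $|w_1|^{-1}=|w_1^7w_2^2|\le1$. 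So $w\in K$, and $K$ is $\Gamma_\pi$-convex.
\item Yet $\pr_1(K)=\T$, whose hull is the closed unit disc. By the maximum principle, $\|1-z_1^mh(z_1)\|_K\ge1$ for every polynomial $h$.
\end{itemize}
So the certificate you propose, built from polynomials in $z_I$ alone, does not exist. This is why the paper separates Theorem~\ref{thm:anchor}(a), which assumes $0\notin\widehat{\pr_I(K)}$, from Theorem~\ref{thm:anchor}(b).

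The missing idea is a quantitative use of the third hypothesis. Take $p=c+\sum_{\alpha\in F}c_\alpha z^\alpha\in\cP^S$ with $|c|=|p(0)|>\|p\|_K$. For each $\alpha\in F$, use $\alpha-\delta e_{i(\alpha)}\in\R_+S$ to find integers $N_\alpha,j_\alpha\ge1$ with $\gamma_\alpha=N_\alpha\alpha-j_\alpha e_{i(\alpha)}\in\R_+S\cap\N^n$. Let $\cB$ be generated by the $z_i$, the $z_i^mz_k$, and the $z^\alpha$, $z^{\gamma_\alpha}$. On $\widehat K^{\cB}$ you get $|z^\alpha|\le\rho^{j_\alpha/N_\alpha}\|z^{\gamma_\alpha}\|_K^{1/N_\alpha}$ with $\rho=\max_{i\in I}|z_i|$, which forces $\rho\ge\varepsilon$ there. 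Only then do the $z_i^mz_k$ bound the remaining coordinates and yield condition (ii). The lower bound on $\rho$ must come from monomials outside $\C[z_I]$.

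Case (a) inherits this gap. You invoke (c) in the subcase $e_1\in\R_+S$, $e_2\notin\R_+S$, not rational polyhedral, which is exactly where the set $K$ above lives. Your split by edges also needs repair:
\begin{itemize}
\item ``Rational polyhedral or an irrational edge'' misses nonclosed cones with rational edges (Example~\ref{ex:nonclosed}).
\item An edge along $\R_+e_1$ does not give $e_1\in\R_+S$ when the cone is not closed there.
\item An irrational edge in the open quadrant does not keep $K$ off the axes when the other edge is $\R_+e_1\subset\R_+S$; $\Gamma_\pi$-convex sets meeting $\{z_2=0\}$ exist.
\end{itemize}
The correct split is by whether $e_1,e_2\in\R_+S$. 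If neither is, every good exponent has both coordinates positive, $U_S\subset(\C^*)^2$, and (b) applies. If only $e_1$ is, either the cone is generated by $e_1$ and one lattice point, or (A1) and (A2) hold for $I=\{1\}$, and the latter must actually be checked.

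Case (b) is fine once reduced to your ``first attempt'', which is the paper's argument (Corollary~\ref{cor:torus}). Drop the Stone--Weierstrass remark, which is wrong: holomorphic polynomials are not dense in $C(K)$. Drop also the ``subtorus'' branch, which is vacuous by Corollary~\ref{cor:goodcover}.

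For the final assertion no lifting is needed. Elements of $\cP^{S_J}$, viewed as polynomials on $\C^n$ independent of the variables outside $J$, belong to $\cP^S$. So approximating $f|_{\C^J}$ on $K\subset\C^J$ gives the Oka--Weil property directly. You only need $K$ to be $S_J$-convex, which follows from $\|p|_{\C^J}\|_K=\|p\|_K$ and surjectivity of restriction. Your remark about monomials bounding transverse coordinates near $K$ gives only local bounds, not a bounded polyhedron.
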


Case~(b) is Sigurðardóttir's theorem combined with Theorem~\ref{thmB}; the
new information there is the finite-stage compactness, which is used in
the proof of~(a) and in a block criterion (Proposition~\ref{prop:block})
that applies to cones containing no coordinate vector. Case~(c) contains
closed nonpolyhedral cones such as
\[
 \Bigl\{x\in\R^n_+:\ x_1\ge\pi\bigl(x_2^2+\cdots+x_n^2\bigr)^{1/2}\Bigr\},
\]
and the cones in case~(a) need not be closed
(Example~\ref{ex:nonclosed}). As an application of the Oka--Weil theorem
we obtain the following analogue of Stolzenberg's criterion
\cite[(1.2)]{Stolz}.

\begin{mainthm}\label{thmD}
Let $K\subset\C^n$ be compact and assume that $\widehat K^{\cB}$ is
compact for some finitely generated monomial subring $\cB\subset\cP^S$; by
Theorems~\ref{thmA} and~\ref{thmC} this holds, for instance, for every
compact $S$-convex set when $\R_+S$ is rational polyhedral or $n=2$. A
point $z\in\C^n$ lies outside $\widehat K^S$ if and only if there is a
continuous family $(p_t)_{t\ge0}$ in some $\cP^S_m$ such that
\begin{itemize}
\item $p_0(z)=0$ and no $p_t$ vanishes on $K$;
\item the hypersurfaces $\{p_t=0\}$ leave every compact subset of $\C^n$
as $t\to\infty$.
\end{itemize}
\end{mainthm}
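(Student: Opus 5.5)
We prove Theorem~\ref{thmD} in the spirit of Stolzenberg's argument, with the classical Oka--Weil theorem replaced by the Oka--Weil property that Theorem~\ref{thmB} gives for $\widehat K^S$. First we reduce to an $S$-convex set. Put $L=\widehat K^S$. Because $\widehat K^{\cB}\supset L$ is compact, $L$ is compact, and $\widehat L^S=L$. The set $\widehat K^{\cB}$ is a bounded set containing $L$ and defined by inequalities $|b|\le\|b\|_K$ with $b\in\cB$. Compactness lets us replace these by finitely many strict inequalities $|b_j|<\|b_j\|_K+\varepsilon$ that still cut out a bounded set. So condition (iv) of Theorem~\ref{thmB} holds for $L$, and $L$ has the Oka--Weil property~(i). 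Also $\|p\|_L=\|p\|_K$ for $p\in\cP^S$, and $p$ has no zero on $K$ if it has none on $L$. Hence it suffices to treat $L$.

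\emph{Sufficiency.} Suppose such a family exists, and suppose for contradiction that $z\in L$. Let $T$ be the set of $t\ge0$ for which $p_t$ has a zero in $L$. By continuity of $t\mapsto p_t$ in the finite-dimensional space $\cP^S_m$ and compactness of $L$, $T$ is closed. Since the zero sets leave every compact set, $T$ is bounded, and it contains $0$. We must show $T$ is open in $[0,\infty)$. Suppose $p_t$ vanishes at a point of $L$ and has no zero on $K$. Then $1/p_t$ is continuous on $K$ but not holomorphic near the zero. We want to show that $p_t$ has no zero on $L=\widehat K^S$ at all, which would make $T$ empty and give the contradiction.

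For this, fix $t$ and argue via the uniform algebra $A$, the closure of $\cP^S$ on $L$. By Oka--Weil for $L$ and the assumption, the maximal ideal space of $A$ is $L$. The algebras $A$ on $K$ and on $L$ coincide isometrically, since $L=\widehat K^S$. So $A$ restricted to $K$ has spectrum $L$, and $p_t$ vanishing at a point of $L$ means $p_t$ is not invertible in $A|_K$. The continuity argument goes as follows. The set of $t$ with $p_t$ invertible in $A|_K$ is open. Its complement is exactly $T$, which is closed and contains $0$. Then, in Stolzenberg's manner, for $t$ near $\sup T$ we use that $1/p_{t'}$ is holomorphic near $L$ for $t'>\sup T$ close to $\sup T$. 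Uniform approximation by $\cP^S$ on $L$ (Oka--Weil) and Rouché-type control then force invertibility at $\sup T$ itself, contradicting the closedness of $T$. The key input here is that $\{p_{t'}=0\}$ misses $L$ for $t'>\sup T$, while Hurwitz continuity places zeros of $p_{\sup T}$ in $L$ only as limits of zeros of $p_{t'}$ on $L$.

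\emph{Necessity.} Let $z\notin L$. Choose $q\in\cP^S$ with $|q(z)|>\|q\|_L$, and normalise so that $q(z)=1>\|q\|_L$. Set $p_t=q-1-t\cdot r$ for a suitable element $r$, interpolated continuously. Concretely, take $p_t(w)=q(w)^k-1$ for $t\le 1$ first. Then $p_0(z)=0$, and $p_0$ has no zero on $K$ because $|q|<1$ there. To push the hypersurfaces to infinity, use the bounded sparse polyhedron $\{|b_j|<c_j\}$ containing $L$ from the reduction above. The family
\[
 p_t=q^k-1-t\sum_j \bigl(b_j/c_j\bigr)^{M}\cdot c
\]
is not obviously right. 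Instead we follow Stolzenberg and take $p_t=q^k-(1+t)$. This never vanishes on $K$, vanishes at $z$ for $t=0$, and its zero set lies in $\{|q|^k=1+t\}$. That set leaves compacta as $t\to\infty$ because $q$ is bounded on compacta.

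\emph{Main obstacle.} The expected hard part is the sufficiency direction: identifying the spectrum of $A|_K$ with $L$ and running the connectedness argument. This is where the Oka--Weil property of $L$ from Theorem~\ref{thmB} is genuinely needed. It lets us show that if $p_{t'}$ is zero-free on $L$, then $1/p_{t'}\in A$, and that this invertibility persists to the endpoint.
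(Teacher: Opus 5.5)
Your strategy matches the paper's: pass to $L=\widehat K^S$, where Theorem~\ref{thmB} gives the Oka--Weil property, argue at $t_*=\max T$ for sufficiency, and take $p_t=q-(1+t)$ for necessity. However, the sufficiency direction has a genuine gap: the one estimate that produces the contradiction is missing. Fix $y\in L$ with $p_{t_*}(y)=0$. For $t>t_*$ the function $1/p_t$ is holomorphic near $L$, hence a uniform limit on $L$ of elements $a_j\in\cP^S$. Since $y\in\widehat K^S$, we have $|a_j(y)|\le\|a_j\|_K$, and therefore
\[
 \frac1{|p_t(y)|}\le\sup_K\frac1{|p_t|}\qquad(t>t_*).
\]
The right-hand side stays bounded for $t\in(t_*,t_*+1]$, because $(t,w)\mapsto p_t(w)$ is continuous and zero-free on the compact set $[t_*,t_*+1]\times K$. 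The left-hand side tends to $+\infty$ as $t\downarrow t_*$. In your Banach-algebra language, $\|p_t^{-1}\|_A=\sup_L|p_t|^{-1}=\sup_K|p_t|^{-1}$ stays bounded, and a norm limit of invertible elements with bounded inverses is invertible. So $p_{t_*}$ would be invertible in $A$, which is impossible since it vanishes at $y\in L$.

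This is the only place where the hypothesis that no $p_t$ vanishes on $K$ enters. Without such a bound, invertibility does not ``persist to the endpoint'', since limits of invertible elements need not be invertible. The ``Rouch\'e-type control'' you invoke plays no role. The ``key input'' you state is not valid either: Hurwitz's theorem gives zeros of $p_{t'}$ near $y$ in $\C^n$, not in $L$. For $t'>\sup T$ there are no zeros of $p_{t'}$ in $L$ at all, so saying that zeros of $p_{\sup T}$ in $L$ are limits of such zeros amounts to assuming the conclusion. You also announce that you will show $T$ is open and then drop that plan; the argument is really about $\max T$.

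There are two smaller points. First, in the reduction, compactness of $\widehat K^{\cB}$ does not by itself give finitely many inequalities $|b_j|<\|b_j\|_K+\varepsilon$ cutting out a \emph{bounded} set. A finite-subcover argument only excludes a compact set, such as a large sphere, and gives no control of the polyhedron outside it. For the full polynomial ring the coordinate functions supply that control, but they are not in $\cP^S$; producing a bounded sparse polyhedron is exactly the nontrivial content of Theorem~\ref{thmB}. You do not need this step. Since $\|b\|_L=\|b\|_K$ for $b\in\cB$, you have $\widehat L^{\cB}=\widehat K^{\cB}$, so condition (ii) of Theorem~\ref{thmB} holds directly for the $S$-convex set $L$ (Remark~\ref{rem:nonconvex}). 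Second, the necessity direction is correct once the false starts are discarded. Taking $k=1$ suffices, and all $p_t=q-(1+t)$ lie in the space $\cP^S_m$ containing $q$, because that space contains the constants.
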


We also prove the rational counterparts of these results
(Theorem~\ref{thm:rational}). The paper is organized as follows.
Section~\ref{sec:prelim} describes the good coordinate locus, and
Section~\ref{sec:chart} contains the chart theorem and the proof of
Theorem~\ref{thmA}. Theorem~\ref{thmB} is proved in
Section~\ref{sec:finite}, Theorem~\ref{thmC} in Section~\ref{sec:cases},
where we also discuss a block criterion, and
Section~\ref{sec:applications} is devoted to the Oka--Stolzenberg
criterion and to rational approximation. In dimensions $n\ge3$ we do not
know whether the conditions of Theorem~\ref{thmB} hold for every compact
$S$-convex set when $\R_+S$ is not rational polyhedral;
Remark~\ref{rem:open} gives an equivalent formulation of this problem in
terms of the spectrum of a uniform algebra.

\section{Exponent cones and the good coordinate locus}\label{sec:prelim}

Throughout, $\N=\{0,1,2,\ldots\}$, $\R_+=[0,\infty)$, and $e_1,\ldots,e_n$
is the standard basis of $\R^n$. A \emph{convex cone} is a convex subset
$\Gamma\subset\R^n_+$ with $0\in\Gamma$ and $t\Gamma\subset\Gamma$ for
$t\ge0$; it is closed under addition, but it need not be closed or
polyhedral. We put
\[
 M_\Gamma=\Gamma\cap\N^n,\qquad
 \cP^\Gamma=\spn\{z^\alpha:\ \alpha\in M_\Gamma\}=\C[M_\Gamma].
\]
If $S\subset\R^n_+$ is compact and convex with $0\in S$, then
$\Gamma_S=\R_+S$ is a convex cone and
\[
 \cP^S=\cP^{\Gamma_S}.
\]
Indeed, if $\alpha\in\Gamma_S$, then $\alpha=ts$ with $s\in S$ and
$t\ge0$, and for every integer $m\ge t$ we have
\[
 \alpha=m\cdot\Bigl(\frac tm\,s\Bigr)\in mS,
\]
because $0\in S$ and $S$ is convex; the converse inclusion is obvious.
All results below are stated for an arbitrary convex cone $\Gamma$; the
case $\Gamma=\Gamma_S$ gives the statements in the introduction. We write
$\widehat K^\Gamma$ for the hull with respect to $\cP^\Gamma$ and call $K$
\emph{$\Gamma$-convex} if $\widehat K^\Gamma=K$. We say that the
\emph{Oka--Weil property} holds on a compact set $K$ if every function
holomorphic in a neighbourhood of $K$ is a uniform limit on $K$ of
elements of $\cP^\Gamma$.

More generally, let $\cB\subset\C[z_1,\ldots,z_n]$ be a unital
subalgebra. For a compact set $K$ we use the polynomial and the rational
hull with respect to $\cB$,
\begin{align*}
 \widehat K^{\cB}&=\bigl\{z\in\C^n:\ |b(z)|\le\|b\|_K\ \text{for all }
 b\in\cB\bigr\},\\
 \cR_\cB(K)&=\bigl\{z\in\C^n:\ b(z)\in b(K)\ \text{for all }b\in\cB\bigr\}.
\end{align*}
Both are closed, $\cR_\cB(K)\subset\widehat K^\cB$, and both hulls become
smaller when $\cB$ becomes larger. For $\cB=\C[z]$ we obtain the ordinary
hulls $\widehat K$ and $\cR(K)$, so that
\[
 \widehat K\subset\widehat K^\cB,\qquad \cR(K)\subset\cR_\cB(K).
\]
A point $z$ lies outside $\cR_\cB(K)$ exactly when some $b\in\cB$ vanishes
at $z$ but has no zero on $K$. A \emph{monomial algebra} is a unital
subalgebra spanned by monomials. For such an algebra we define the set of
\emph{good exponents} and the \emph{good locus} by
\begin{align*}
 G(\cB)&=\bigl\{\beta\in\N^n:\ z^\beta\in\cB\ \text{and}\
 z^{\beta+e_j}\in\cB\ \text{for}\ j=1,\ldots,n\bigr\},\\
 U_\cB&=\bigcup_{\beta\in G(\cB)}\{z\in\C^n:\ z^\beta\ne0\}.
\end{align*}
For $\cB=\cP^\Gamma$ we write $G_\Gamma$ and $U_\Gamma$. On
$\{z^\beta\ne0\}$, $\beta\in G(\cB)$, we have
$z_j=z^{\beta+e_j}/z^\beta$, and this simple identity is the basis of the
paper. The next lemma describes $U_\Gamma$ stratum by stratum. Parts of it are
implicit in \cite[Section~2]{Sig}, where it is shown that every element of
$\cP^S$ is constant on the line $\C e_k$ if $e_k\notin\R_+S$, and on the
coordinate subspace $\C^J$ if $S\cap\R^J=\{0\}$; when $\Gamma$ is rational
polyhedral, the lemma can also be read off from the orbit--cone
correspondence \cite[Chapter~3]{CLS}. We give a self-contained proof. For
$J\subset\{1,\ldots,n\}$ we put
\[
 \R^J=\{x\in\R^n:\ x_i=0\ \text{for}\ i\notin J\},\qquad
 M_J=M_\Gamma\cap\R^J .
\]

\begin{lemma}\label{lem:strata}
Let $\Gamma\subset\R^n_+$ be a convex cone, let $z\in\C^n$, and let
$J=\{j:\ z_j\ne0\}$. The following conditions are equivalent.
\begin{enumerate}[label=\textup{(\arabic*)}]
\item There is $\beta\in G_\Gamma\cap\R^J$; equivalently, $z\in U_\Gamma$.
\item $M_J$ spans $\R^J$, and for every $i\notin J$ there is
$\alpha\in M_\Gamma\cap\R^{J\cup\{i\}}$ with $\alpha_i>0$.
\item The fibre
\[
 F(z)=\bigl\{w\in\C^n:\ w^\alpha=z^\alpha\ \text{for all}\
 \alpha\in M_\Gamma\bigr\}
\]
is bounded.
\end{enumerate}
If these conditions hold, then $F(z)=\{z\}$.
\end{lemma}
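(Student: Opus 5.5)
We prove the cycle $(1)\Rightarrow(3)$, $(3)\Rightarrow(2)$ (by contraposition), $(2)\Rightarrow(1)$, and obtain $F(z)=\{z\}$ along the way. The equivalence inside (1) is immediate: $z^\beta\neq0$ holds exactly when $\operatorname{supp}\beta\subset J$, that is, when $\beta\in\R^J$.

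\emph{$(1)\Rightarrow(3)$ and $F(z)=\{z\}$.} Fix $\beta\in G_\Gamma\cap\R^J$ and let $w\in F(z)$. Then $w^\beta=z^\beta\ne0$, and the basic identity gives
\[
 w_j=\frac{w^{\beta+e_j}}{w^\beta}=\frac{z^{\beta+e_j}}{z^\beta}=z_j,\qquad j=1,\ldots,n .
\]
Hence $F(z)=\{z\}$, which is bounded.

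\emph{$(3)\Rightarrow(2)$.} We assume (2) fails and exhibit an unbounded subset of $F(z)$. Note first that for every $w$ with $w_k=0$ for $k\notin J$ and every $\alpha\notin\R^J$ we have $w^\alpha=0=z^\alpha$. There are two ways (2) can fail.
\begin{enumerate}
\item Suppose $M_J$ does not span $\R^J$. Choose a nonzero real $v\in\R^J$ orthogonal to $M_J$. Put $w(t)_j=z_je^{tv_j}$ for $j\in J$ and $w(t)_k=0$ for $k\notin J$. For $\alpha\in M_J$ we get $w(t)^\alpha=z^\alpha e^{t\langle v,\alpha\rangle}=z^\alpha$, and the other exponents are handled by the remark above. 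So $w(t)\in F(z)$, and since $v\ne0$ the family is unbounded as $t\to\pm\infty$.
\item Suppose some $i\notin J$ admits no $\alpha\in M_\Gamma\cap\R^{J\cup\{i\}}$ with $\alpha_i>0$. Put $w=z+te_i$, $t\in\C$. If $\operatorname{supp}\alpha\subset J$, then $w^\alpha=z^\alpha$. If $\alpha$ has a coordinate outside $J\cup\{i\}$, both sides vanish. The remaining case, $\alpha\in\R^{J\cup\{i\}}$ with $\alpha_i>0$, is excluded by assumption. Hence the whole line $z+\C e_i$ lies in $F(z)$.
\end{enumerate}

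\emph{$(2)\Rightarrow(1)$.} This is the main step: we must build $\beta$ inside $\R^J$ such that $\beta+e_k\in\Gamma$ for every $k$, including $k\notin J$. Choose finitely many elements $a^1,\ldots,a^m\in M_J$ spanning $\R^J$, and let $C$ be the cone they generate. Then $C\subset\Gamma$ and $C$ has nonempty interior relative to $\R^J$.
\begin{enumerate}
\item Pick a rational point in the relative interior of $C$ and scale it to an integer point $\gamma\in M_J$ such that the ball in $\R^J$ of radius $R$ around $\gamma$ lies in $C$. Here $R$ is chosen larger than $1$ and than all the numbers $|\alpha^i_J|/\alpha^i_i$ introduced in the next item.
\item For $i\notin J$, take $\alpha^i$ as in (2), write $a_i=\alpha^i_i>0$, and let $\alpha^i_J$ be the projection of $\alpha^i$ onto $\R^J$. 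Then $e_i=(\alpha^i-\alpha^i_J)/a_i$.
\item Set $\beta=\gamma\in\R^J$. For $j\in J$, $\gamma+e_j\in C\subset\Gamma$ because $R>1$. For $i\notin J$,
\[
 \gamma+e_i=\tfrac1{a_i}\alpha^i+\Bigl(\gamma-\tfrac1{a_i}\alpha^i_J\Bigr).
\]
Both summands lie in $\Gamma$: the second lies in $C$ by the choice of $R$. Since a convex cone is closed under addition and positive scaling, $\gamma+e_i\in\Gamma$.
\end{enumerate}
All the vectors $\gamma+e_k$ are integer points, so $\beta\in G_\Gamma\cap\R^J$, which is (1).
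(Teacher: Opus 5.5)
Your proof is correct and follows essentially the same route as the paper's: the same chart argument for (1)$\Rightarrow$(3) and $F(z)=\{z\}$, the same two unbounded families (the exponential curve and the line $z+\C e_i$) for the contrapositive of (3)$\Rightarrow$(2), and for (2)$\Rightarrow$(1) the same idea of taking a sufficiently large interior lattice point of the cone generated by $M_J$ so that it absorbs the perturbations $e_j$ and $-\alpha^i_J/\alpha^i_i$. The only difference is cosmetic: you pick a scaled rational interior point $\gamma$ with a ball of radius $R$ inside $C$, while the paper uses $k\gamma$ with $\gamma$ a sum of linearly independent lattice points and $k$ large.
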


\begin{proof}
A good exponent $\beta$ satisfies $z^\beta\ne0$ if and only if
$\beta\in\R^J$, which gives the equivalence inside (1). If (1) holds and
$w\in F(z)$, then $w^\beta=z^\beta\ne0$ and
\[
 w_j=\frac{w^{\beta+e_j}}{w^\beta}=\frac{z^{\beta+e_j}}{z^\beta}=z_j
 \qquad(j=1,\ldots,n),
\]
so $F(z)=\{z\}$ and (3) holds.

Assume that (2) fails. If $M_J$ does not span $\R^J$, choose a nonzero
$v\in\R^J$ orthogonal to $M_J$ and put, for $t\in\R$,
\[
 w(t)_j=z_je^{tv_j}\quad(j\in J),\qquad w(t)_i=0\quad(i\notin J).
\]
If $\alpha\in M_\Gamma$ is supported in $J$, then
$w(t)^\alpha=z^\alpha e^{t\langle\alpha,v\rangle}=z^\alpha$; otherwise
both $w(t)^\alpha$ and $z^\alpha$ vanish. Thus the unbounded curve $w(t)$
lies in $F(z)$. If for some $i\notin J$ no
$\alpha\in M_\Gamma\cap\R^{J\cup\{i\}}$ has $\alpha_i>0$, then every
$\alpha\in M_\Gamma$ either has $\alpha_i=0$ or has a positive coordinate
outside $J\cup\{i\}$, and in both cases $w\mapsto w^\alpha$ is constant on
the line $z+\C e_i$; hence this line lies in $F(z)$. Therefore (3)
implies (2).

Assume (2). Choose linearly independent
$\gamma^1,\ldots,\gamma^{|J|}\in M_J$, let $C\subset\Gamma$ be the cone
they generate, and let $\gamma=\gamma^1+\cdots+\gamma^{|J|}$ (with
$\gamma=0$ if $J=\varnothing$). Then $\gamma$ lies in the interior of $C$ relative to $\R^J$: in the
basis $\gamma^1,\ldots,\gamma^{|J|}$ of $\R^J$ the cone $C$ is the
positive orthant, and $\gamma$ has all coordinates equal to $1$. Since
$\gamma+e_j/k\to\gamma$ as $k\to\infty$, for $j\in J$ we have
\[
 k\gamma+e_j=k\Bigl(\gamma+\frac{e_j}k\Bigr)\in C
 \qquad\text{for all large }k .
\]
For $i\notin J$ let $\alpha$ be as in (2) and write
$\alpha/\alpha_i=e_i+v_i$ with $v_i\in\R^J$. For large $k$ we have $k\gamma-v_i=k(\gamma-v_i/k)\in C$ for the same
reason, and hence, since $\Gamma$ is closed under addition and
$\alpha/\alpha_i\in\Gamma$,
\[
 k\gamma+e_i=(k\gamma-v_i)+\frac{\alpha}{\alpha_i}\in\Gamma .
\]
Taking one large integer $k$ for all indices, we obtain
$\beta=k\gamma\in G_\Gamma\cap\R^J$.
\end{proof}

\begin{corollary}\label{cor:goodcover}
Let $\Gamma\subset\R^n_+$ be a convex cone and let $K\subset\C^n$ be a
nonempty compact set with $\widehat K^\Gamma=K$ or $\cR_\Gamma(K)=K$.
Then $K\subset U_\Gamma$, so that there are finitely many
$\beta^1,\ldots,\beta^r\in G_\Gamma$ with
\[
 K\subset\bigcup_{k=1}^r\{z^{\beta^k}\ne0\}.
\]
Moreover, $\Gamma$ has nonempty interior, and $0\notin K$ unless
$\Gamma=\R^n_+$.
\end{corollary}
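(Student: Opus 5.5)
The plan is to apply Lemma~\ref{lem:strata} pointwise, using that a hull-convex set contains the fibre of each of its points. Let $z\in K$. If $w\in F(z)$, then $w^\alpha=z^\alpha$ for every $\alpha\in M_\Gamma$, so $p(w)=p(z)$ for every $p\in\cP^\Gamma$.

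\emph{Polynomial hull.} If $\widehat K^\Gamma=K$, then $|p(w)|=|p(z)|\le\|p\|_K$ for all $p$, hence $w\in\widehat K^\Gamma=K$.

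\emph{Rational hull.} If $\cR_\Gamma(K)=K$, then $p(w)=p(z)\in p(K)$ for all $p$, hence $w\in\cR_\Gamma(K)=K$.

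In either case $F(z)\subset K$, so $F(z)$ is bounded. Condition~(3) of Lemma~\ref{lem:strata} then gives (1), that is, $z\in U_\Gamma$. Since $U_\Gamma$ is a union of the open sets $\{z^\beta\ne0\}$, $\beta\in G_\Gamma$, compactness of $K$ yields the finite subcover $\beta^1,\ldots,\beta^r$.

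\emph{Interior of $\Gamma$.} Because $K$ is nonempty, $G_\Gamma$ contains some $\beta$. Then $\beta,\beta+e_1,\ldots,\beta+e_n\in\Gamma$. Their convex hull is an $n$-simplex, since the differences $e_j$ are linearly independent, and it lies in $\Gamma$ because $\Gamma$ is convex. So $\Gamma$ has nonempty interior.

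\emph{The point $0$.} If $0\in K$, apply the first part at $z=0$, where $J=\varnothing$. Condition~(2) of Lemma~\ref{lem:strata} requires, for every $i$, some $\alpha\in M_\Gamma\cap\R^{\{i\}}$ with $\alpha_i>0$. Such an $\alpha$ is a positive multiple of $e_i$, and since $\Gamma$ is a cone this gives $e_i\in\Gamma$ for every $i$. Hence $\Gamma\supset\R^n_+$, and so $\Gamma=\R^n_+$. Alternatively, $0\in\{z^\beta\ne0\}$ forces $\beta=0\in G_\Gamma$, which directly gives $e_j\in\Gamma$ for all $j$.

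None of these steps is a serious obstacle. The only point needing care is the observation that hull-convexity forces $F(z)\subset K$, and this holds for both hulls because the elements of $\cP^\Gamma$ are constant on fibres.
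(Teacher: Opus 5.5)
Your proof is correct and follows the paper's argument. Fibres of points of $K$ lie in either hull, Lemma~\ref{lem:strata} then gives $K\subset U_\Gamma$, compactness gives the finite cover, and condition (2) with $J=\varnothing$ handles the point $0$. The only cosmetic difference is that you exhibit the simplex $\conv\{\beta,\beta+e_1,\ldots,\beta+e_n\}\subset\Gamma$ explicitly, where the paper simply notes that $M_\Gamma$ spans $\R^n$.
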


\begin{proof}
Every element of $\cP^\Gamma$ is constant on $F(z)$, so $F(z)$ is
contained in both hulls of $K$ when $z\in K$. Hence $F(z)$ is bounded and
Lemma~\ref{lem:strata} gives $K\subset U_\Gamma$; the finite cover follows
from compactness. If $\beta\in G_\Gamma$, then $e_j=(\beta+e_j)-\beta$
lies in the linear span of $M_\Gamma$ for every $j$, so $\Gamma$ has
nonempty interior. If $0\in K$, then condition (2) of
Lemma~\ref{lem:strata} with $J=\varnothing$ gives $e_i\in\Gamma$ for all
$i$, so $\Gamma=\R^n_+$.
\end{proof}

In particular, nonempty compact $\Gamma$-convex sets exist only for cones
with nonempty interior, and it costs nothing to assume this.

\begin{example}\label{ex:triangle}
Let $S=\conv\{0,e_1,e_1+e_2\}$. Then
\[
 \Gamma_S=\{x\in\R^2_+:\ x_2\le x_1\},\qquad
 \cP^S=\C[z_1,\,z_1z_2],
\]
since $M_{\Gamma_S}$ is generated by $(1,0)$ and $(1,1)$. A pair
$\beta=(a,b)$ is good exactly when $b<a$, so $U_S=\{z_1\ne0\}$. For $0<r<2$
and $s>0$ the map $(z_1,z_2)\mapsto(z_1,z_1z_2)$ identifies the set
\[
 K_{r,s}=\bigl\{z\in\C^2:\ |z_1-2|\le r,\ |z_1z_2|\le s\bigr\}
\]
with a closed bidisc, so $K_{r,s}$ is compact and $S$-convex, and it meets
$\{z_2=0\}$. For the unit torus $\T^2$, however,
\[
 \widehat{\T^2}^{\,S}=\bigl\{z\in\C^2:\ |z_1|\le1,\ |z_1z_2|\le1\bigr\}
\]
contains the line $\{z_1=0\}$; the hull of a compact set need not be
bounded. This example is taken from \cite[Section~1]{Sig}, where it is
also observed that the hull of a single point $(0,z_2)$ is the whole line
$\{z_1=0\}$.
\end{example}

\section{Monomial charts and finitely generated exponent semigroups}
\label{sec:chart}

Let $\cB$ be a finitely generated monomial algebra, generated by $1$ and
monomials $g_1,\ldots,g_N$, and let
\[
 \pi=(g_1,\ldots,g_N):\C^n\longrightarrow\C^N .
\]
Every $b\in\cB$ has the form $b=Q\circ\pi$ with a polynomial $Q$ on
$\C^N$, and therefore
\begin{equation}\label{eq:pullback}
 \widehat K^\cB=\pi^{-1}\bigl(\widehat{\pi(K)}\bigr),\qquad
 \cR_\cB(K)=\pi^{-1}\bigl(\cR(\pi(K))\bigr).
\end{equation}
For a compact set $K$ we denote by $A_\cB(K)$ the uniform closure of
$\cB|_K$ in $C(K)$ and by $A^{\mathrm{rat}}_\cB(K)$ the uniform closure of
the set of quotients
\[
 \frac bc,\qquad b,c\in\cB,\quad c\ne0\ \text{on}\ K .
\]
Both are closed unital subalgebras of $C(K)$. A \emph{character} of such
an algebra $A$ is a nonzero multiplicative linear functional
$\psi:A\to\C$. We use only the following basic facts from Gelfand theory,
see \cite[Chapter~I]{Gam}.
\begin{enumerate}[label=\textup{(G\arabic*)}]
\item Every character satisfies $|\psi(f)|\le\|f\|_K$ for $f\in A$; in
particular it is continuous.
\item Every proper ideal of $A$ is contained in a maximal ideal, and every
maximal ideal is the kernel of a character.
\item Consequently, if $f_1,\ldots,f_r\in A$ and no character vanishes at
all of $f_1,\ldots,f_r$, then there are $h_1,\ldots,h_r\in A$ with
$f_1h_1+\cdots+f_rh_r=1$. In particular, an element on which no
character vanishes is invertible in $A$.
\end{enumerate}
Indeed, (G3) follows from (G2), because the ideal generated by
$f_1,\ldots,f_r$ cannot be proper.

The next lemma is the localization identity
$\cB[(z^\beta)^{-1}]=\C[z_1,\ldots,z_n][(z^\beta)^{-1}]$ for semigroup
algebras, familiar from toric geometry \cite[Sections~1.1--1.3]{CLS},
written in terms of characters.

\begin{lemma}\label{lem:chartlemma}
Let $\cB$ be a monomial algebra and let $\varphi:\cB\to\C$ be a unital
homomorphism with $\varphi(z^\beta)\ne0$ for some $\beta\in G(\cB)$. Put
\[
 z_j=\frac{\varphi(z^{\beta+e_j})}{\varphi(z^\beta)},\qquad j=1,\ldots,n.
\]
Then $z^\beta\ne0$, and $\varphi(b)=b(z)$ for every $b\in\cB$.
\end{lemma}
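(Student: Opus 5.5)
The statement has two parts: first show that $z^\beta\ne0$, and then show that $\varphi$ agrees with evaluation at $z$ on every monomial of $\cB$. Linearity then extends the agreement to all of $\cB$. The guiding idea is that multiplicativity of $\varphi$ lets us move the identity $z^{\beta+e_j}z^\beta{}^{\,\cdots}$ from $\cB$ over to the values of $\varphi$.

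\emph{Step 1: $z^\beta=\varphi(z^\beta)$.} Write $c=\varphi(z^\beta)\neq0$ and $\beta=(\beta_1,\ldots,\beta_n)$, so that $z_j=\varphi(z^{\beta+e_j})/c$. Let $|\beta|=\beta_1+\cdots+\beta_n$. In $\cB$ we have the identity between monomials
\[
 \prod_{j=1}^n\bigl(z^{\beta+e_j}\bigr)^{\beta_j}=\bigl(z^\beta\bigr)^{|\beta|}\,z^\beta ,
\]
since both sides have exponent $(|\beta|+1)\beta$. Every factor on either side lies in $\cB$. Applying the multiplicative map $\varphi$ gives
\[
 \prod_j (c z_j)^{\beta_j}=c^{|\beta|+1},
\]
that is, $c^{|\beta|}z^\beta=c^{|\beta|+1}$. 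Dividing by $c^{|\beta|}$ yields $z^\beta=c\neq0$.

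\emph{Step 2: arbitrary monomials.} Let $z^\alpha\in\cB$ and put $k=|\alpha|$. The exponents of $z^\alpha(z^\beta)^{k}$ and $\prod_j(z^{\beta+e_j})^{\alpha_j}$ are both $\alpha+k\beta$, so
\[
 z^\alpha\,(z^\beta)^{k}=\prod_{j}\bigl(z^{\beta+e_j}\bigr)^{\alpha_j}
\]
holds in $\cB$, with all factors in $\cB$. Applying $\varphi$ gives
\[
 \varphi(z^\alpha)\,c^{k}=\prod_j(cz_j)^{\alpha_j}=c^{k}z^\alpha .
\]
Since $c\ne0$, this gives $\varphi(z^\alpha)=z^\alpha$, where the right side is the monomial evaluated at the point $z$. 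Unitality of $\varphi$ handles $\alpha=0$ in the same way.

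\emph{Step 3: all of $\cB$.} Because $\cB$ is spanned by the monomials it contains, linearity of $\varphi$ gives $\varphi(b)=b(z)$ for every $b\in\cB$.

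There is no real obstacle here. The only point needing care is that every monomial used in the identities actually belongs to $\cB$, and this is exactly what $\beta\in G(\cB)$ guarantees. Step 1 is included so that $z^\beta\ne0$ is established directly rather than assumed; it is in fact the special case $\alpha=\beta$ of Step 2.
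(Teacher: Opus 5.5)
Your proof is correct and matches the paper's: the paper uses the same identity $z^\alpha(z^\beta)^{|\alpha|}=\prod_j(z^{\beta+e_j})^{\alpha_j}$, applies $\varphi$ to get $\varphi(z^\alpha)=z^\alpha$, and then reads off $z^\beta=\varphi(z^\beta)\ne0$ as the case $\alpha=\beta$. Your separate Step~1 is, as you note yourself, just that special case written out.
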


\begin{proof}
Let $z^\alpha\in\cB$. Since
$\alpha+|\alpha|\beta=\sum_j\alpha_j(\beta+e_j)$, we have the identity
\[
 z^\alpha\,(z^\beta)^{|\alpha|}=\prod_{j=1}^n(z^{\beta+e_j})^{\alpha_j}
\]
between products of elements of $\cB$; here
$|\alpha|=\alpha_1+\cdots+\alpha_n$, and all factors belong to $\cB$
because $z^\alpha\in\cB$ and $\beta\in G(\cB)$. Since $\varphi$ is
multiplicative and $\varphi(z^{\beta+e_j})=z_j\,\varphi(z^\beta)$ by the
definition of $z_j$, applying $\varphi$ gives
\[
 \varphi(z^\alpha)\,\varphi(z^\beta)^{|\alpha|}
 =\prod_{j=1}^n\bigl(z_j\,\varphi(z^\beta)\bigr)^{\alpha_j}
 =z^\alpha\,\varphi(z^\beta)^{|\alpha|},
\]
hence $\varphi(z^\alpha)=z^\alpha$. For $\alpha=\beta$ this gives
$z^\beta=\varphi(z^\beta)\ne0$, and the assertion follows by linearity.
\end{proof}

The following lemma is a classical consequence of the Oka--Weil theorem;
in the language of uniform algebras it is a special case of Shilov's
idempotent theorem \cite{Gam}. We include the short proof.

\begin{lemma}\label{lem:nopieces}
Let $L\subset\C^N$ be compact and let $E$ be a compact set disjoint from
$L$. If $\widehat L=L\cup E$ or $\cR(L)=L\cup E$, then $E=\varnothing$.
\end{lemma}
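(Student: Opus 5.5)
We argue by contradiction: assume $E\neq\varnothing$ and produce a polynomial that violates the defining inequality of the hull at a point of $E$. The basic tool is the classical Oka--Weil theorem in $\C^N$, applied to a locally constant function on the compact set $X=L\cup E$.

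We treat first the case $\widehat L=L\cup E$. Since $\widehat L$ is polynomially convex, $X$ is compact and polynomially convex. Because $L$ and $E$ are disjoint compact sets, they have disjoint open neighbourhoods $V\supset L$ and $W\supset E$. The function $h$ equal to $0$ on $V$ and $1$ on $W$ is holomorphic on the neighbourhood $V\cup W$ of $X$. By the Oka--Weil theorem there is a polynomial $P$ with $\|P-h\|_X<1/3$. Then $\|P\|_L<1/3$, while $|P(w)|>2/3$ for any $w\in E$. This contradicts $w\in\widehat L$, so $E=\varnothing$.

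The rational case $\cR(L)=L\cup E$ runs the same way. Now $X$ is rationally convex, and we use the rational Oka--Weil theorem to approximate $h$ within $1/3$ on $X$ by a rational function $P/Q$ with $Q\neq0$ on $X$. To contradict $w\in\cR(L)$, we need a polynomial that vanishes at $w$ but has no zero on $L$. We take
\[
 F=P-\frac{P(w)}{Q(w)}\,Q .
\]
Clearly $F(w)=0$. If $F(z)=0$ for some $z\in L$, then $P(z)/Q(z)=P(w)/Q(w)$. This is impossible, because the left side has modulus $<1/3$ and the right side has modulus $>2/3$. Hence $F$ has no zero on $L$, so $w\notin\cR(L)$, again a contradiction.

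Two background facts are used, and I regard them as standard rather than as obstacles. The first is that $\widehat L$ is polynomially convex. The second is that $\cR(L)$ is compact and rationally convex, i.e.\ $\cR(\cR(L))=\cR(L)$. For the latter: if $b(w)\notin b(\cR(L))$, then in particular $b(w)\notin b(L)$. The rest of the argument is routine.
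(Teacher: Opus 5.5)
Your proof is the paper's argument: Oka--Weil (polynomial or rational) applied on $X=L\cup E$ to the function that is $0$ near $L$ and $1$ near $E$, followed in the rational case by the polynomial $P-cQ$ with $c=P(w)/Q(w)$; only the constants differ. One small slip is in your background remark: the implication you wrote only gives the trivial inclusion $\cR(L)\subset\cR(\cR(L))$, whereas the inclusion actually needed is $\cR(\cR(L))\subset\cR(L)$, which follows from $b(\cR(L))=b(L)$ for every polynomial $b$ (so $b(w)\notin b(L)$ implies $b(w)\notin b(\cR(L))$).
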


\begin{proof}
Let $X$ be $\widehat L$ or $\cR(L)$; it is compact and, respectively,
polynomially or rationally convex. Since $L$ and $E$ are disjoint compact
sets, they have disjoint open neighbourhoods $V_L$ and $V_E$, and the
function equal to $0$ on $V_L$ and to $1$ on $V_E$ is holomorphic on the
neighbourhood $V_L\cup V_E$ of $X$. By the Oka--Weil theorem or
its rational version \cite[Ch.~1]{Stout} there is a polynomial,
respectively a rational function $G=P/Q$ with $Q\ne0$ on $X$, such that
\[
 |G|<\tfrac14\ \text{on}\ L,\qquad |G|>\tfrac34\ \text{on}\ E .
\]
In the polynomial case every $y\in E$ satisfies
$|G(y)|\le\|G\|_L<1/4$ because $y\in\widehat L$, a contradiction. In the
rational case let $y\in E$ and $c=G(y)$. The polynomial $P-cQ$ vanishes at
$y\in\cR(L)$, hence at some $x\in L$, and $G(x)=c$ because $Q(x)\ne0$;
this contradicts $|c|>3/4>|G(x)|$.
\end{proof}

The proof of the next theorem follows the classical identification of
the characters of a finitely generated uniform algebra with the points of
the polynomial, or rational, hull of the image of the generators
\cite{Gam,Stout}. The good charts allow us to identify this hull with
$\pi(K)$ itself.

\begin{theorem}[Chart theorem]\label{thm:chart}
Let $\cB$ be a finitely generated monomial algebra, and let
$K\subset\C^n$ be a compact set with $K\subset U_\cB$.
\begin{enumerate}[label=\textup{(\alph*)}]
\item If $\widehat K^\cB=K$, then $\pi(K)$ is polynomially convex, every
character of $A_\cB(K)$ is the evaluation at a point of $K$, and there
are $\beta^1,\ldots,\beta^r\in G(\cB)$ and $q_1,\ldots,q_r\in\cB$ with
\begin{equation}\label{eq:certificate}
 \Bigl\|\,1-\sum_{k=1}^r z^{\beta^k}q_k\Bigr\|_K<1 .
\end{equation}
Every function holomorphic in a neighbourhood of $K$ belongs to
$A_\cB(K)$.
\item If $\cR_\cB(K)=K$, then $\pi(K)$ is rationally convex, and every
function holomorphic in a neighbourhood of $K$ belongs to
$A^{\mathrm{rat}}_\cB(K)$.
\end{enumerate}
\end{theorem}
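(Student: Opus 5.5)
The plan is to work in the ambient space $\C^N$ through $\pi$ and show first that $\widehat{\pi(K)}=\pi(K)$, respectively $\cR(\pi(K))=\pi(K)$. After that, the classical Oka--Weil theorem on $\pi(K)$ and the good charts will give the approximation statements.

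\emph{Step 1: the hull of $\pi(K)$ splits.} Put $L=\pi(K)$ and $X=\widehat L$ (or $\cR(L)$ in case (b)). By \eqref{eq:pullback}, $\pi^{-1}(X)=\widehat K^\cB=K$, so $X\cap\pi(\C^n)=L$. We need to show that $X\setminus L$ is compact, so that Lemma~\ref{lem:nopieces} applies. Choose $\beta\in G(\cB)$ with $\beta^k$ covering $K$ as in Corollary~\ref{cor:goodcover}; its proof only uses $K\subset U_\cB$ and compactness. Each $z^{\beta}$ and $z^{\beta+e_j}$ is a polynomial $Q_\beta$, $Q_{\beta,j}$ in the generators $g_i$. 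Let $y\in X$ with $Q_\beta(y)\ne0$. Evaluation at $y$ composed with $\pi^*$ gives a unital homomorphism $\varphi:\cB\to\C$, well defined because every polynomial relation among the $g_i$ vanishes on $\pi(\C^n)$, hence on $X$ (the closure of $\pi(\C^n)$ is an affine variety containing $L$, and hulls stay in it). Lemma~\ref{lem:chartlemma} then yields $z\in\C^n$ with $\varphi(b)=b(z)$ for all $b\in\cB$, that is, $y=\pi(z)$. So $y\in L$. Hence
\[
 E:=X\setminus L\subset X\cap\bigcap_k\{Q_{\beta^k}=0\},
\]
which is compact. Since $\bigcup_k\{Q_{\beta^k}\ne0\}\supset L$, the set $E$ is also disjoint from $L$. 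Lemma~\ref{lem:nopieces} gives $E=\varnothing$, so $L$ is polynomially (rationally) convex. The main obstacle is the claim that $X$ lies in the Zariski closure of $\pi(\C^n)$, which is what makes $\varphi$ well defined. I will argue it directly: if $R(g_1,\dots,g_N)=0$ identically, then $R\equiv0$ on $L$, so $|R(y)|\le\|R\|_L=0$ for $y\in\widehat L$; in the rational case $R(y)\in R(L)=\{0\}$.

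\emph{Step 2: characters and the certificate in (a).} Let $\psi$ be a character of $A_\cB(K)$. By (G1), $y=(\psi(g_1),\dots,\psi(g_N))$ satisfies $|Q(y)|\le\|Q\circ\pi\|_K$, so $y\in\widehat L=L$. Thus $y=\pi(x)$ with $x\in K$, and $\psi$ agrees with evaluation at $x$ on $\cB$, hence on $A_\cB(K)$ by continuity. Since no point of $K$ is a common zero of the $z^{\beta^k}$, no character kills all of them. By (G3) there are $h_k\in A_\cB(K)$ with $\sum z^{\beta^k}h_k=1$. Approximating each $h_k$ by some $q_k\in\cB$ gives \eqref{eq:certificate}.

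\emph{Step 3: approximation.} Let $f$ be holomorphic near $K$. I will try to write $f=F\circ\pi$ near $K$ with $F$ holomorphic near $L$. On $\{z^\beta\ne0\}$ the map $\pi$ has the holomorphic local left inverse $y\mapsto(Q_{\beta,j}(y)/Q_\beta(y))_j$, defined on $\{Q_\beta\ne0\}$. So $F_k:=f\circ(Q_{\beta^k,j}/Q_{\beta^k})_j$ is holomorphic on a neighbourhood of $L\cap\{Q_{\beta^k}\ne0\}$, but only along the variety $V=\overline{\pi(\C^n)}$. Gluing these with a partition of unity would fail holomorphy, so I would rather avoid extension. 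Instead, apply Oka--Weil on the chart level: by the certificate, $w=\sum z^{\beta^k}q_k$ is invertible in $A_\cB(K)$. Each $f\,z^{\beta^k}(z^{\beta^k})^{N_k}$ is, on $K$, a holomorphic function of the good coordinates, and the aim is to write it as $F_k\circ\pi$ with $F_k$ holomorphic near $L$, using Cartan's Theorem B on the Stein neighbourhoods of the polynomially convex set $L$ in $V$ (the classical Oka--Weil theorem for subvarieties). Concretely, I would use Oka--Weil for holomorphic functions on a neighbourhood of $L$ in $V$ to get $f\circ\pi^{-1}|_{V}$ as a uniform limit of polynomials $Q\circ\pi$. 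The needed ingredient is that $\pi$ restricts to a biholomorphism from a neighbourhood of $K$ onto a neighbourhood of $L$ in $V$, which follows from the chart formula $z_j=z^{\beta+e_j}/z^\beta$. This last identification is the technical point I expect to need most care. The rational case is identical, with rational Oka--Weil on $V$.
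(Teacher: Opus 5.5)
Your Steps~1 and~2 are correct and are essentially the paper's argument. Using the polynomials $Q_{\beta^k}$, instead of enlarging the generators so that $z^{\beta^k}=g_{i_k}$, is harmless. The gap is Step~3, which is where the last assertion of (a) and the approximation statement of (b) have to be proved. What you give there is a plan whose decisive steps are left open.

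The sentence about $f\,z^{\beta^k}(z^{\beta^k})^{N_k}$ leads nowhere. The route you settle on needs two things you do not establish:
\begin{enumerate}
\item that $\pi$ maps a neighbourhood of $K$ biholomorphically onto a neighbourhood of $L$ in $V=\overline{\pi(\C^n)}$, which you flag but do not prove;
\item an Oka--Weil theorem for functions holomorphic near $L$ on $V$, which you invoke without proof. It is a heavier result than the classical one: it rests on Cartan's Theorem~B on a Stein polynomial-polyhedron neighbourhood of $L$, and on its rational analogue in case~(b).
\end{enumerate}
The route can be completed. Point~1 follows by applying Lemma~\ref{lem:chartlemma} to evaluation at any $y\in V$ with $Q_\beta(y)\ne0$, exactly as in your Step~1. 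This shows that $y\mapsto(Q_{\beta,j}(y)/Q_\beta(y))_j$ inverts $\pi$ from $V\cap\{Q_\beta\ne0\}$ onto $\{z^\beta\ne0\}$, so $V$ is smooth near $L$. But as written the conclusion is not reached.

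The missing idea is that no extension problem arises at all. In Step~2 you already have $h_k\in A_\cB(K)$ with $\sum_kz^{\beta^k}h_k=1$ on $K$. Multiplying by $z_j$ gives $z_j=\sum_kz^{\beta^k+e_j}h_k$ on $K$, and $z^{\beta^k+e_j}\in\cB$ because $\beta^k$ is good. Hence $A_\cB(K)$ contains every polynomial. Moreover $K$ is polynomially convex in $\C^n$, because $\widehat K\subset\widehat K^\cB=K$. The classical Oka--Weil theorem on $K$ itself then finishes (a).

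For (b), the paper first shows, as in your Step~2, that every character $\psi$ of $A^{\mathrm{rat}}_\cB(K)$ is evaluation at a point of $K$. Put $w=(\psi(g_i))_i$. If $Q(w)=0$ but $Q\circ\pi$ had no zero on $K$, then $1/(Q\circ\pi)\in A^{\mathrm{rat}}_\cB(K)$ would give $1=\psi(Q\circ\pi)\,\psi(1/(Q\circ\pi))=0$; applying this to $Q-Q(w)$ yields $w\in\cR(L)=L$. The same identity then puts the $z_j$ in $A^{\mathrm{rat}}_\cB(K)$. By (G3), $1/q\in A^{\mathrm{rat}}_\cB(K)$ for every polynomial $q$ without zeros on $K$. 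Finally, $\cR(K)\subset\cR_\cB(K)=K$ together with the rational Oka--Weil theorem in $\C^n$ completes the proof.
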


\begin{proof}
Choose $\beta^1,\ldots,\beta^r\in G(\cB)$ such that the sets
$\{z^{\beta^k}\ne0\}$ cover $K$. Enlarging the set of generators we may
assume that $z^{\beta^k}=g_{i_k}$, so that
\[
 z^{\beta^k}=w_{i_k}\circ\pi,\qquad k=1,\ldots,r,
\]
where $w_1,\ldots,w_N$ are the coordinates of $\C^N$. Put $L=\pi(K)$.

(a) Let $W=\widehat L$. For $w\in W$ and $b=Q\circ\pi\in\cB$ put
$\varphi_w(b)=Q(w)$. This is well defined, since $Q\circ\pi=Q'\circ\pi$
implies
\[
 |(Q-Q')(w)|\le\|Q-Q'\|_L=0 .
\]
Thus $\varphi_w$ is a unital homomorphism with $|\varphi_w(b)|\le\|b\|_K$.
Let
\[
 E=\bigl\{w\in W:\ w_{i_1}=\cdots=w_{i_r}=0\bigr\}.
\]
If $w\in W\setminus E$, then $\varphi_w(z^{\beta^k})\ne0$ for some $k$,
and Lemma~\ref{lem:chartlemma} gives a point $z$ with $\varphi_w(b)=b(z)$
for all $b\in\cB$. Then $|b(z)|\le\|b\|_K$ for all $b$, so
$z\in\widehat K^\cB=K$, and
\[
 w=\bigl(\varphi_w(g_1),\ldots,\varphi_w(g_N)\bigr)=\pi(z)\in L .
\]
Hence $W=L\cup E$. The set $E$ is closed in the compact set $W$, being a
common zero set of coordinate functions, and hence compact. It is disjoint
from $L$, because every point of $K$ has some $z^{\beta^k}\ne0$. Therefore
Lemma~\ref{lem:nopieces} gives $W=L$.

Let $\psi$ be a character of $A=A_\cB(K)$ and put
$w=(\psi(g_1),\ldots,\psi(g_N))$. For every polynomial $Q$ on $\C^N$ we
have $Q(w)=\psi(Q\circ\pi)$, hence
\[
 |Q(w)|\le\|Q\circ\pi\|_K=\|Q\|_L ,
\]
and $w\in W=L$. Thus $w=\pi(z)$ with $z\in K$, and $\psi$ agrees with the
evaluation at $z$ on the generators, hence on $\cB$ and, by continuity, on
$A$.

Since $K$ is covered by the sets $\{z^{\beta^k}\ne0\}$ and every
character is the evaluation at a point of $K$, no character of $A$
vanishes on all the functions $z^{\beta^k}$. By (G3) there are
$h_k\in A$ with
\[
 \sum_{k=1}^r z^{\beta^k}h_k=1\qquad\text{on }K .
\]
Choose $\eta>0$ with $\eta\sum_k\|z^{\beta^k}\|_K<1$ and $q_k\in\cB$
with $\|h_k-q_k\|_K<\eta$. Then
\[
 \Bigl\|\,1-\sum_{k=1}^r z^{\beta^k}q_k\Bigr\|_K
 =\Bigl\|\sum_{k=1}^r z^{\beta^k}(h_k-q_k)\Bigr\|_K
 \le\eta\sum_{k=1}^r\|z^{\beta^k}\|_K<1,
\]
which is \eqref{eq:certificate}. Moreover, on $K$ we have
\[
 z_j=\sum_{k=1}^r z^{\beta^k+e_j}h_k,\qquad z^{\beta^k+e_j}\in\cB,
\]
so every coordinate function belongs to the algebra $A$, and hence every
polynomial does.
Since $\widehat K\subset\widehat K^\cB=K$, the set $K$ is polynomially
convex, and the classical Oka--Weil theorem shows that every function
holomorphic near $K$ lies in the closure of the polynomials on $K$, which
is contained in $A$.

(b) Let $R=\cR(L)$. For $w\in R$ and $b=Q\circ\pi$ put again
$\varphi_w(b)=Q(w)$; if $Q\circ\pi=Q'\circ\pi$, then $Q-Q'$ vanishes on
$L$, so that $(Q-Q')(w)\in(Q-Q')(L)=\{0\}$. If $\varphi_w(z^{\beta^k})\ne0$
for some $k$, Lemma~\ref{lem:chartlemma} gives $z$ with
$\varphi_w(b)=b(z)$ for all $b$, and
\[
 b(z)=Q(w)\in Q(L)=b(K)\qquad\text{for all }b=Q\circ\pi\in\cB ,
\]
so $z\in\cR_\cB(K)=K$ and $w=\pi(z)\in L$. As before, $R=L\cup E$ with $E$
compact and disjoint from $L$, and Lemma~\ref{lem:nopieces} gives $R=L$.

Let $\psi$ be a character of $A^{\mathrm{rat}}=A^{\mathrm{rat}}_\cB(K)$
and $w=(\psi(g_1),\ldots,\psi(g_N))$. If $Q$ is a polynomial with
$Q(w)=0$, then $Q$ has a zero on $L$. Otherwise $b=Q\circ\pi$ has no zero
on $K$, so that $1/b\in A^{\mathrm{rat}}$ and
\[
 1=\psi(b)\,\psi(1/b)=Q(w)\,\psi(1/b)=0 .
\]
Applying this to $Q-Q(w)$ we get $w\in R=L$, so $w=\pi(z)$ with $z\in K$.
Then $\psi(b)=b(z)$ on $\cB$, and $\psi(b/c)=b(z)/c(z)$ for every
admissible quotient because $\psi(b)=\psi(c)\psi(b/c)$; by continuity
$\psi$ is the evaluation at $z$. The argument of~(a) now shows that the
coordinate functions belong to $A^{\mathrm{rat}}$. A polynomial $q$ without zeros on $K$ therefore belongs to
$A^{\mathrm{rat}}$, and no character vanishes on it because characters
are point evaluations at points of $K$; by (G3), $1/q\in A^{\mathrm{rat}}$.
Thus $A^{\mathrm{rat}}$ contains every rational function without poles on
$K$. Finally
$\cR(K)\subset\cR_\cB(K)=K$, and the rational Oka--Weil theorem
\cite[Ch.~1]{Stout} completes the proof.
\end{proof}

In the situation of Theorem~\ref{thm:chart}, the possible obstruction is
the set $E$ of points of $\widehat{\pi(K)}$ at which all good monomials
vanish. When $\cB=\cP^\Gamma$ for a rational polyhedral cone, $E$ lies in
the closed subset of the toric variety $\overline{\pi(\C^n)}$ that is not
covered by the charts $\{z^\beta\ne0\}$, $\beta\in G_\Gamma$; this subset
may contain points without preimage in $\C^n$ and may contain singular
points (Example~\ref{ex:threefaces}). The proof shows that such points
never lie in the hull, and it never uses the geometry of the toric
variety.

\begin{theorem}\label{thm:A}
Let $\Gamma\subset\R^n_+$ be a rational polyhedral cone, let
$g_1,\ldots,g_N$ be monomial generators of $\cP^\Gamma$, and let
$\pi=(g_1,\ldots,g_N)$. For a compact set $K\subset\C^n$ the following
hold.
\begin{enumerate}[label=\textup{(\alph*)}]
\item $K$ is $\Gamma$-convex if and only if $K\subset U_\Gamma$ and
$\pi(K)$ is polynomially convex. In this case the Oka--Weil property holds
on $K$.
\item If $\cR_\Gamma(K)=K$, then every function holomorphic in a
neighbourhood of $K$ is a uniform limit on $K$ of quotients $p/q$ with
$p,q\in\cP^\Gamma$ and $q$ without zeros on $K$.
\end{enumerate}
In particular, Theorem~\ref{thmA} holds.
\end{theorem}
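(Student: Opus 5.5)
The plan is to reduce everything to the Chart theorem (Theorem~\ref{thm:chart}) applied to $\cB=\cP^\Gamma$. This is legitimate because for a rational polyhedral cone the semigroup $M_\Gamma=\Gamma\cap\N^n$ is finitely generated by Gordan's lemma, so $\cP^\Gamma$ is a finitely generated monomial algebra with generators $1,g_1,\ldots,g_N$. By \eqref{eq:pullback} we then have $\widehat K^\Gamma=\pi^{-1}(\widehat{\pi(K)})$ and $\cR_\Gamma(K)=\pi^{-1}(\cR(\pi(K)))$. I will keep the same $\pi$ throughout. Enlarging the generating set by good monomials does not change the polynomial convexity of $\pi(K)$: each added coordinate is a polynomial in the old ones, so the image under the new map is the graph of a polynomial map over the old image, and graphs of polynomial maps preserve polynomial convexity. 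This remark is needed because the proof of the Chart theorem enlarges the generators.

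For (a), the forward direction goes as follows. If $K$ is $\Gamma$-convex, Corollary~\ref{cor:goodcover} gives $K\subset U_\Gamma$. Theorem~\ref{thm:chart}(a) then yields that $\pi(K)$ is polynomially convex and that the Oka--Weil property holds, since every function holomorphic near $K$ lies in $A_{\cP^\Gamma}(K)$.

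For the converse, assume $K\subset U_\Gamma$ and that $\pi(K)$ is polynomially convex. Then $\widehat K^\Gamma=\pi^{-1}(\pi(K))$, and I must show this equals $K$. Take $w\in\widehat K^\Gamma$. Then $\pi(w)=\pi(z)$ for some $z\in K$. Since every element of $\cP^\Gamma$ is a polynomial in the $g_i$, we get $w^\alpha=z^\alpha$ for all $\alpha\in M_\Gamma$, that is, $w\in F(z)$. Because $z\in U_\Gamma$, the last assertion of Lemma~\ref{lem:strata} gives $F(z)=\{z\}$, so $w=z\in K$. This is the only place the hypothesis $K\subset U_\Gamma$ enters, and it is exactly why it is needed: without it, fibres over points off $U_\Gamma$ are unbounded.

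For (b), Corollary~\ref{cor:goodcover} again gives $K\subset U_\Gamma$, and Theorem~\ref{thm:chart}(b) directly gives that every function holomorphic near $K$ lies in $A^{\mathrm{rat}}_{\cP^\Gamma}(K)$. This is the stated approximation by quotients $p/q$ with $q$ zero-free on $K$.

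Finally, Theorem~\ref{thmA} follows by taking $\Gamma=\Gamma_S$, since $\cP^S=\cP^{\Gamma_S}$. The only step I expect to need any care is the finite generation of $M_\Gamma$. For this I will cite Gordan's lemma: write $\Gamma$ as generated by finitely many integer vectors $v_1,\ldots,v_s$, after clearing denominators. The lattice points of the bounded zonotope $\{\sum t_iv_i:\ 0\le t_i\le1\}$ together with the $v_i$ then generate $M_\Gamma$, because any lattice point of $\Gamma$ is a nonnegative integer combination of the $v_i$ plus a lattice point of that zonotope. Everything else is a direct assembly of the earlier results.
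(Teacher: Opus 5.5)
Your proposal is correct and follows essentially the paper's proof. Like the paper, you use Gordan's lemma for finite generation, Corollary~\ref{cor:goodcover} together with Theorem~\ref{thm:chart} for the forward direction and for (b), and the good-chart identity $z_j=z^{\beta+e_j}/z^\beta$ for the converse; you reach that identity through the last assertion of Lemma~\ref{lem:strata}, while the paper writes it out. Your remark that adjoining good monomials as generators replaces $\pi(K)$ by the graph of a polynomial map over it, and so does not affect polynomial convexity, is correct; the paper leaves this implicit in the ``enlarging the set of generators'' step of the chart theorem.
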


\begin{proof}
We first show that $\cP^\Gamma$ is a finitely generated monomial algebra.
After clearing denominators, $\Gamma$ is the set of nonnegative linear
combinations of finitely many integer vectors $v_1,\ldots,v_m$. Let
$\alpha\in M_\Gamma$ and write $\alpha=\sum_lt_lv_l$ with $t_l\ge0$.
Splitting each $t_l$ into its integer part $\lfloor t_l\rfloor$ and its
fractional part $t_l-\lfloor t_l\rfloor\in[0,1)$, we get
\[
 \alpha=\sum_{l=1}^m\lfloor t_l\rfloor\,v_l+\gamma,\qquad
 \gamma=\sum_{l=1}^m\bigl(t_l-\lfloor t_l\rfloor\bigr)v_l .
\]
The vector $\gamma=\alpha-\sum_l\lfloor t_l\rfloor v_l$ has integer
coordinates and lies in $\Gamma\subset\R^n_+$, so $\gamma\in M_\Gamma$;
moreover $\gamma$ lies in the bounded set
$\Pi=\{\sum_ls_lv_l:\ 0\le s_l<1\}$, which contains only finitely many
lattice points. Hence $M_\Gamma$ is generated, as a semigroup, by
$v_1,\ldots,v_m$ and the finitely many points of $M_\Gamma\cap\Pi$, and
$\cP^\Gamma$ is generated by the corresponding monomials. (This is the
usual proof of Gordan's lemma \cite[Proposition~1.2.17]{CLS}.) If $K$ is $\Gamma$-convex, Corollary~\ref{cor:goodcover} gives
$K\subset U_\Gamma$, and Theorem~\ref{thm:chart}(a) shows that $\pi(K)$ is
polynomially convex and that the Oka--Weil property holds. Conversely,
assume that $K\subset U_\Gamma$ and that $\pi(K)$ is polynomially convex.
By \eqref{eq:pullback},
\[
 \widehat K^\Gamma=\pi^{-1}\bigl(\pi(K)\bigr).
\]
If $\pi(z)=\pi(z')$ with $z'\in K$, then every element of $\cP^\Gamma$
takes the same value at $z$ and at $z'$. Choosing $\beta\in G_\Gamma$ with
$z'^\beta\ne0$ we get $z^\beta=z'^\beta\ne0$ and
\[
 z_j=\frac{z^{\beta+e_j}}{z^\beta}=\frac{z'^{\beta+e_j}}{z'^\beta}=z'_j
 \qquad(j=1,\ldots,n).
\]
Hence $\widehat K^\Gamma=K$. Part (b) follows in the same way from
Corollary~\ref{cor:goodcover} and Theorem~\ref{thm:chart}(b).
\end{proof}

The hypothesis $K\subset U_\cB$ in Theorem~\ref{thm:chart} cannot be
omitted, and for algebras that do not come from cones it is not
automatic.

\begin{example}\label{ex:cusp}
This classical example shows what goes wrong without good charts. Let $\cB=\C[z^2,z^3]\subset\C[z]$ and let $K$ be the closed unit disc.
Since $|z^2|>1$ outside $K$, we have $\widehat K^\cB=K$. Every element of
$\cB$ has zero derivative at the origin, and so has every uniform limit on
$K$; hence $f(z)=z$ cannot be approximated. Here
\[
 G(\cB)=\{2,3,4,\ldots\},\qquad U_\cB=\C^*,
\]
so $K\not\subset U_\cB$. The semigroup $\{0,2,3,\ldots\}$ is not of the
form $\Gamma\cap\N$, which is why Corollary~\ref{cor:goodcover} does not
apply.
\end{example}

\begin{example}\label{ex:threefaces}
Let $\Gamma=\{x\in\R^3_+:\ x_3\le x_1+x_2\}$. Then
\[
 \cP^\Gamma=\C[z_1,\,z_2,\,z_1z_3,\,z_2z_3],\qquad
 \pi=(z_1,\,z_2,\,z_1z_3,\,z_2z_3),
\]
and $\pi(\C^3)$ is dense in the quadric cone
$Y=\{w_1w_4=w_2w_3\}\subset\C^4$, a standard example of an affine toric
variety \cite[Chapter~1]{CLS}, which is singular at the origin.
Moreover
\[
 G_\Gamma=\{\beta\in\N^3:\ \beta_3<\beta_1+\beta_2\},\qquad
 U_\Gamma=\{z\in\C^3:\ (z_1,z_2)\ne(0,0)\}.
\]
The points $(0,0,w_3,w_4)\in Y$ with $(w_3,w_4)\ne0$ have no preimage in
$\C^3$, and the corresponding characters of $\cP^\Gamma$ vanish on all
good monomials. By Theorem~\ref{thm:A}, for $0<r<2$ and $s>0$ the compact
set
\[
 \bigl\{z\in\C^3:\ z_1=0,\ |z_2-2|\le r,\ |z_2z_3|\le s\bigr\},
\]
whose image is a bidisc in the plane $\{w_1=w_3=0\}$, is $\Gamma$-convex
and meets $\{z_1=0\}$ and $\{z_3=0\}$. The two-point set
$\{(1,0,0),(0,1,0)\}$ is $\Gamma$-convex and meets each of the three
coordinate hyperplanes, while no compact $\Gamma$-convex set meets the axis
$\{z_1=z_2=0\}$, whose points have unbounded fibres.
\end{example}

\section{Arbitrary cones: reduction to finite stages}\label{sec:finite}

We now allow an arbitrary convex cone $\Gamma$, so that $\cP^\Gamma$ may
be infinitely generated. Sparse polyhedra around $K$ are defined by
\eqref{eq:polyhedron} with $\cP^\Gamma$ in place of $\cP^S$. The next
theorem contains Theorem~\ref{thmB}.

In the language of uniform algebras, condition (ii) of the next theorem
says that the uniform closure of $\cP^\Gamma$ on $K$ is natural on $K$,
that is, its spectrum is $K$ \cite{Gam}.

\begin{theorem}\label{thm:B}
Let $\Gamma\subset\R^n_+$ be a convex cone and let $K\subset\C^n$ be a
nonempty compact $\Gamma$-convex set. Let $A_\Gamma(K)$ be the uniform
closure of $\cP^\Gamma|_K$. The following conditions are equivalent.
\begin{enumerate}[label=\textup{(\roman*)}]
\item The Oka--Weil property holds on $K$.
\item Every character of $A_\Gamma(K)$ is the evaluation at a point of
$K$.
\item There are $\beta^1,\ldots,\beta^r\in G_\Gamma$ and
$q_1,\ldots,q_r\in\cP^\Gamma$ with
\[
 \Bigl\|\,1-\sum_{k=1}^r z^{\beta^k}q_k\Bigr\|_K<1 .
\]
\item $K$ is contained in a bounded sparse polyhedron.
\item There is a finitely generated monomial algebra
$\cB_0\subset\cP^\Gamma$ such that $\widehat K^{\cB_0}$ is compact.
\end{enumerate}
If these conditions hold, then every neighbourhood of $K$ contains a
bounded sparse polyhedron around $K$.
\end{theorem}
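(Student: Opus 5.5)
The plan is to prove the cycle (v)$\Rightarrow$(i)$\Rightarrow$(ii)$\Rightarrow$(iii)$\Rightarrow$(iv)$\Rightarrow$(v), and then read off the final assertion from the construction in (iii)$\Rightarrow$(iv).

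\emph{(v)$\Rightarrow$(i).} Let $\cB_0$ be given. Enlarge it to a finitely generated monomial algebra $\cB\subset\cP^\Gamma$ containing $\cB_0$ and, for finitely many $\beta^k\in G_\Gamma$ whose sets $\{z^{\beta^k}\ne0\}$ cover $K$ (Corollary~\ref{cor:goodcover}), the monomials $z^{\beta^k}$ and $z^{\beta^k+e_j}$. Then $K\subset U_\cB$, and $X=\widehat K^{\cB}\subset\widehat K^{\cB_0}$ is compact. However, $X$ need not equal $K$. To repair this, use $\Gamma$-convexity: for each $z\in X\setminus K$ there is $p\in\cP^\Gamma$ with $|p(z)|>\|p\|_K$, and the same strict inequality holds on a neighbourhood of $z$. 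By compactness of $X\setminus V$ for a small open $V\supset K$, finitely many such $p$ suffice. Adjoining their monomials gives a finitely generated $\cB'\supset\cB$ with $\widehat K^{\cB'}\subset V$. It remains to remove the points of $\widehat K^{\cB'}\cap V\setminus K$. My first attempt is to argue directly: $\widehat K^{\cB'}\subset U_{\cB'}$ if $V$ is chosen inside the good cover, and then apply the argument of Theorem~\ref{thm:chart}(a) with $K$ replaced by $X'=\widehat K^{\cB'}$, which is $\cB'$-convex. This gives the Oka--Weil property on $X'$ for $A_{\cB'}(X')$. Now take $V=V_1\cup V_2$, where $V_1$ is a small neighbourhood of $K$ and $V_2$ is a neighbourhood of the compact set $X'\setminus V_1$; since $X'\setminus V_1$ is compact and disjoint from $K$, this can be arranged after finitely many further monomials. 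The function equal to $0$ near $K$ and $1$ near $X'\setminus V_1$ then contradicts $X'$ being the $\cB'$-hull of $K$, exactly as in Lemma~\ref{lem:nopieces}. So $X'\setminus V_1=\varnothing$ for every $V_1$, and hence $X'=K$. Theorem~\ref{thm:chart}(a) then yields (i) and also (iii) with $q_k\in\cB'\subset\cP^\Gamma$. This step, making the finite-stage hull exactly $K$, is the main obstacle; the key points are compactness together with the idempotent argument.

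\emph{(i)$\Rightarrow$(ii).} Each coordinate $z_j$ lies in $A_\Gamma(K)$. A character $\psi$ therefore gives a point $w=(\psi(z_1),\dots,\psi(z_n))$, and $\psi$ equals evaluation at $w$ on all polynomials. By (G1) and density, $w$ lies in $\widehat K\subset\widehat K^\Gamma=K$.

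\emph{(ii)$\Rightarrow$(iii).} By Corollary~\ref{cor:goodcover}, no point of $K$ is a common zero of the $z^{\beta^k}$. So (G3) gives $h_k$ with $\sum_k z^{\beta^k}h_k=1$, and we approximate each $h_k$ by some $q_k\in\cP^\Gamma$ as in the proof of Theorem~\ref{thm:chart}.

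\emph{(iii)$\Rightarrow$(iv).} Set $f=1-\sum_k z^{\beta^k}q_k$ with $\|f\|_K<1$. Choose $c$ with $\|f\|_K<c<1$ and take the polyhedron
\[
 D=\{|f|<c,\ |z^{\beta^k}|<c_k,\ |q_k|<c_k',\ |z^{\beta^k+e_j}|<c_{kj}\},
\]
with constants slightly above the sup norms on $K$. On $D$ we have $|\sum_k z^{\beta^k}q_k|>1-c$, so some $|z^{\beta^k}|>(1-c)/(r\max_k c_k')$. Then $|z_j|=|z^{\beta^k+e_j}|/|z^{\beta^k}|$ is bounded, and $D$ is bounded.

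\emph{(iv)$\Rightarrow$(v).} Take $\cB_0$ generated by the monomials of the $p_j$. Then $\widehat K^{\cB_0}\subset\overline D$, which is compact.

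\emph{Final assertion.} Given a neighbourhood $W$ of $K$, the hull $\widehat K^{\cB'}=K$ from the first step shows that for each $z$ in the compact set $\overline D\setminus W$ there is $p\in\cB'$ with $|p(z)|>\|p\|_K$. Finitely many of these, added to the defining inequalities of $D$, give a bounded sparse polyhedron around $K$ inside $W$.
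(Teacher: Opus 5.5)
Your implications (i)$\Rightarrow$(ii)$\Rightarrow$(iii)$\Rightarrow$(iv)$\Rightarrow$(v) follow the paper's argument. So does the first half of (v)$\Rightarrow$(i), where you use $\Gamma$-convexity and compactness to find finitely many $p\in\cP^\Gamma$ that push $\widehat K^{\cB'}$ into a prescribed neighbourhood $V$ inside the good cover. The paper phrases this via the decreasing compact sets $K_j=\widehat K^{\cB_j}$ with $\bigcap_jK_j=K$, which is equivalent.

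The gap is the ``idempotent'' step you use to conclude $X'=\widehat K^{\cB'}=K$. The function equal to $0$ near $K$ and $1$ near $X'\setminus V_1$ is holomorphic on a neighbourhood of $X'$ only if $X'$ is a disjoint union of two compact pieces. You cannot take $V_2$ to be a neighbourhood of $X'\setminus V_1$ disjoint from $V_1$ before $X'$ exists. Separating the pieces requires adjoining further monomials, and that changes $\cB'$ and shrinks $X'$. What the argument actually yields is, for each $V_1$, a \emph{different} finite stage whose hull lies in $V_1$, which your compactness argument had already given. The conclusion ``$X'\setminus V_1=\varnothing$ for every $V_1$, hence $X'=K$'' is therefore not established for any single $\cB'$. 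Both your derivation of (i) and your proof of the final assertion rest on this identity.

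The repair is easy, in either of two ways.

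\emph{First fix (the paper's route).} You never need $X'=K$. Fix the open set $\Omega\supset K$ on which $f$ is holomorphic, and choose $V\subset\Omega$ inside the good cover from the start. Then $X'\subset V\subset U_{\cB'}$ is compact and $\cB'$-convex. Theorem~\ref{thm:chart}(a) applied to $X'$ gives $f\in A_{\cB'}(X')$, hence approximation on $K$.

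\emph{Second fix.} The identity $X'=K$ is in fact true, but for a different reason than the one you gave. Theorem~\ref{thm:chart}(a) applied to $X'$ puts every polynomial $P$ into $A_{\cB'}(X')$. Passing to the limit in $|b(z)|\le\|b\|_K$, for $z\in X'$ and $b\in\cB'$, gives $|P(z)|\le\|P\|_K$. Hence $X'\subset\widehat K\subset\widehat K^\Gamma=K$.

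With either fix the rest of your proof goes through. For the final assertion you can also, as the paper does, take the separating functions directly from the $\Gamma$-convexity of $K$, so no finite-stage hull identity is needed there.
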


\begin{proof}
(i)$\Rightarrow$(ii). By (i), $A_\Gamma(K)$ contains all polynomials, so
it is the uniform closure $P(K)$ of the polynomials on $K$. If $\psi$ is a
character and $w=(\psi(z_1),\ldots,\psi(z_n))$, then
\[
 |q(w)|=|\psi(q)|\le\|q\|_K\qquad\text{for every polynomial }q,
\]
so $w\in\widehat K\subset\widehat K^\Gamma=K$. Thus $\psi$ is the
evaluation at $w$ on the polynomials, and hence, by the continuity (G1)
of $\psi$, on $P(K)$.

(ii)$\Rightarrow$(iii). By Corollary~\ref{cor:goodcover}, $K$ is covered
by finitely many sets $\{z^{\beta^k}\ne0\}$ with $\beta^k\in G_\Gamma$. By
(ii), no character of $A_\Gamma(K)$ vanishes on all $z^{\beta^k}$, and (G3) gives $h_k\in A_\Gamma(K)$ with $\sum_kz^{\beta^k}h_k=1$ on
$K$. Choosing $q_k\in\cP^\Gamma$ with $\|h_k-q_k\|_K<\eta$, where
$\eta\sum_k\|z^{\beta^k}\|_K<1$, we obtain (iii) exactly as in the proof
of Theorem~\ref{thm:chart}.

(iii)$\Rightarrow$(iv). Put
\[
 F=\sum_{k=1}^r z^{\beta^k}q_k,\qquad \vartheta=\|1-F\|_K<1,
\]
and consider the set $D$ of all $z\in\C^n$ such that
\[
 |1-F(z)|<\frac{1+\vartheta}2,\qquad
 |q_k(z)|<\|q_k\|_K+1,\qquad
 |z^{\beta^k+e_j}|<\|z^{\beta^k+e_j}\|_K+1
\]
for all $k$ and $j$. It is a sparse polyhedron around $K$: the defining
functions $1-F$, $q_k$ and $z^{\beta^k+e_j}$ belong to $\cP^\Gamma$, and
each bound exceeds the norm on $K$ of the corresponding function, since
$(1+\vartheta)/2>\vartheta$. On $D$ the first inequality gives
$|F(z)|\ge1-|1-F(z)|>(1-\vartheta)/2$, and the second gives
$|F(z)|\le\sum_k|z^{\beta^k}|\,|q_k(z)|\le C\max_k|z^{\beta^k}|$. Thus
\[
 \delta:=\frac{1-\vartheta}2<|F(z)|\le C\max_k|z^{\beta^k}|,
 \qquad C=\sum_{k=1}^r\bigl(\|q_k\|_K+1\bigr),
\]
so some $k$ satisfies $|z^{\beta^k}|\ge\delta/C$, and then
\begin{equation}\label{eq:coordbound}
 |z_j|=\frac{|z^{\beta^k+e_j}|}{|z^{\beta^k}|}
 \le\frac C\delta\,\max_l\bigl(\|z^{\beta^l+e_j}\|_K+1\bigr),
 \qquad j=1,\ldots,n.
\end{equation}
Hence $D$ is bounded.

(iv)$\Rightarrow$(v). Let $\cB_0$ be generated by the monomials occurring
in the polynomials $p_j$ that define the polyhedron $D$. Every
$z\in\widehat K^{\cB_0}$ satisfies
\[
 |p_j(z)|\le\|p_j\|_K<c_j\qquad(j=1,\ldots,N),
\]
so the closed set $\widehat K^{\cB_0}$ lies in the bounded set $D$.

(v)$\Rightarrow$(i). Let $f$ be holomorphic in an open set
$\Omega\supset K$. By Corollary~\ref{cor:goodcover} there are
$\beta^1,\ldots,\beta^r\in G_\Gamma$ with
\[
 K\subset V:=\bigcup_{k=1}^r\{z^{\beta^k}\ne0\},
\]
and we may assume $\Omega\subset V$. Enumerate
$M_\Gamma=\{\alpha^1,\alpha^2,\ldots\}$ and let $\cB_j$ be the algebra
generated by $\cB_0$, the monomials $z^{\beta^k}$ and $z^{\beta^k+e_i}$
for all $k$ and $i$, and $z^{\alpha^1},\ldots,z^{\alpha^j}$. The sets
\[
 K_j=\widehat K^{\cB_j},\qquad j=1,2,\ldots,
\]
are closed subsets of the compact set $\widehat K^{\cB_0}$, they
decrease, and
\[
 \bigcap_jK_j=\widehat K^\Gamma=K,
\]
because $\bigcup_j\cB_j=\cP^\Gamma$. Hence $K_j\subset\Omega$ for some
$j$: otherwise the sets $K_j\setminus\Omega$ would form a decreasing
sequence of nonempty compact sets, and a point in their intersection
would be a point of $K\setminus\Omega$. For $b\in\cB_j$ we have
$\|b\|_{K_j}\le\|b\|_K$ by the definition of $K_j$, and
$\|b\|_K\le\|b\|_{K_j}$ because $K\subset K_j$; hence
$\widehat{K_j}^{\cB_j}=\widehat K^{\cB_j}=K_j$, that is, $K_j$ is
$\cB_j$-convex, and $K_j\subset V\subset U_{\cB_j}$ because every
$\beta^k$ belongs to $G(\cB_j)$. Theorem~\ref{thm:chart}(a) shows that $f$
is a uniform limit on $K_j$, and hence on $K$, of elements of
$\cB_j\subset\cP^\Gamma$.

For the last assertion, let $D$ be the bounded polyhedron constructed in
the proof of (iii)$\Rightarrow$(iv) and let $\Omega\supset K$ be open.
For each $w$ in the compact set $\overline D\setminus\Omega$ there are
$p_w\in\cP^\Gamma$ and $t_w>0$ with
\[
 |p_w(w)|>t_w>\|p_w\|_K ,
\]
because $w\notin K=\widehat K^\Gamma$. Finitely many of the open sets
$\{|p_w|>t_w\}$ cover $\overline D\setminus\Omega$, and intersecting $D$
with the corresponding sets $\{|p_w|<t_w\}$ gives a bounded sparse
polyhedron $D'$ around $K$. The closure of $D'$ is contained in
$\overline D\cap\bigcap\{|p_w|\le t_w\}$, which does not meet
$\overline D\setminus\Omega$; hence $\overline{D'}\subset\Omega$. This is
the sparse analogue of the classical fact that a polynomially convex set
has a basis of neighbourhoods consisting of polynomial polyhedra
\cite[Section~2.7]{Hor}.
\end{proof}

\begin{remark}\label{rem:nonconvex}
If $K$ is an arbitrary compact set and $H=\widehat K^\Gamma$, then
\[
 \|p\|_H=\|p\|_K\qquad\text{for all }p\in\cP^\Gamma .
\]
Hence conditions (iii)--(v) hold for $K$ if and only if they hold for $H$,
and in this case $H$ is compact and the Oka--Weil property holds on $H$.
In particular, condition (v) for $K$ alone implies that
$\widehat K^\Gamma$ is compact.
\end{remark}

\begin{remark}\label{rem:open}
Let $K$ be a compact $\Gamma$-convex set. By Lemma~\ref{lem:chartlemma}, a
character of $A_\Gamma(K)$ that does not vanish on all good monomials is
the evaluation at a point $z$ with $|b(z)|\le\|b\|_K$ for all
$b\in\cP^\Gamma$, that is, at a point of $K$. Therefore the Oka--Weil
property on $K$ fails exactly when $A_\Gamma(K)$ has a character that
annihilates every $z^\beta$, $\beta\in G_\Gamma$, that is, when the
spectrum of $A_\Gamma(K)$ is strictly larger than $K$. By
Theorem~\ref{thm:A} this does not happen for rational polyhedral cones,
and by Corollary~\ref{cor:dimtwo} below it does not happen when $n=2$. We
do not know whether it can happen for $n\ge3$.
\end{remark}

Condition (iii) is often the easiest to verify by hand, and it gives
explicit bounds through \eqref{eq:coordbound}.

\begin{example}\label{ex:certificate}
Let
\[
 \Gamma_0=\Bigl\{x\in\R^3_+:\ x_3\le x_1+x_2+\sqrt{x_1x_2}\Bigr\}.
\]
The function $x_1+x_2+\sqrt{x_1x_2}$ is concave and positively
homogeneous, so $\Gamma_0$ is a closed convex cone with nonempty
interior. It is not polyhedral, and therefore $\cP^{\Gamma_0}$ is not
finitely generated: otherwise the closed cone generated by $M_{\Gamma_0}$,
which equals $\Gamma_0$ because the rational points are dense in
$\Gamma_0$, would be polyhedral. The exponents $e_1$, $e_2$, $e_1+e_3$ and
$e_2+e_3$ lie in $\Gamma_0$, and $e_1,e_2\in G_{\Gamma_0}$; for instance
$e_1$, $2e_1$, $e_1+e_2$ and $e_1+e_3$ all satisfy the defining inequality.

Let $r>0$ and $s>0$, let
\[
 \mathbb B_r=\bigl\{(z_1,z_2)\in\C^2:\ |z_1-1|^2+|z_2-1|^2\le r^2\bigr\}
\]
be the closed Euclidean ball in $\C^2$ with centre $(1,1)$ and radius $r$,
and put
\begin{equation}\label{eq:Krs}
 K_{r,s}=\bigl\{z\in\C^3:\ (z_1,z_2)\in\mathbb B_r,\ |z_1z_3|\le s,\
 |z_2z_3|\le s\bigr\}.
\end{equation}
We assume
\[
 1<r<\sqrt2 ,
\]
and the two bounds play different roles. The distance from $(1,1)$ to
$(0,0)$ in $\C^2$ is $\sqrt2$, so $r<\sqrt2$ means exactly that
$(0,0)\notin\mathbb B_r$. Since $\mathbb B_r$ is compact, there is then
$c>0$ with $\max(|z_1|,|z_2|)\ge c$ on $\mathbb B_r$, and the last two
inequalities in \eqref{eq:Krs} give $|z_3|\le s/c$ on $K_{r,s}$; hence
$K_{r,s}$ is closed and bounded, that is, compact. On the other hand,
$r>1$ means that $\mathbb B_r$ meets the line $\{z_1=0\}$: the point
$(0,z_2)$ lies in $\mathbb B_r$ when $|z_2-1|^2\le r^2-1$. Thus $K_{r,s}$
contains the points $(0,z_2,0)$ with $|z_2-1|^2\le r^2-1$ and,
symmetrically, points $(z_1,0,0)$; in particular it meets the hyperplanes
$\{z_1=0\}$, $\{z_2=0\}$ and $\{z_3=0\}$, but not the axis
$\{z_1=z_2=0\}$.

We show that $K_{r,s}$ is $\cB$-convex for
\[
 \cB=\C[z_1,\,z_2,\,z_1z_3,\,z_2z_3]\subset\cP^{\Gamma_0}.
\]
First, the projection of $K_{r,s}$ to the first two coordinates is all of
$\mathbb B_r$, because $(z_1,z_2,0)\in K_{r,s}$ for every
$(z_1,z_2)\in\mathbb B_r$. Second, $\mathbb B_r$ is polynomially convex:
if $w\notin\mathbb B_r$, the Hahn--Banach theorem gives a complex linear
function $\ell$ on $\C^2$ with
$\operatorname{Re}\ell(w)>\max_{\mathbb B_r}\operatorname{Re}\ell$, and
then $|e^{\ell(w)}|>\max_{\mathbb B_r}|e^\ell|$, where $e^\ell$ is a
uniform limit of polynomials on compact sets. Now let
$z\in\widehat{K_{r,s}}^{\,\cB}$. Every polynomial $q(z_1,z_2)$ belongs to
$\cB$, so
\[
 |q(z_1,z_2)|\le\|q\|_{K_{r,s}}=\|q\|_{\mathbb B_r},
\]
and therefore $(z_1,z_2)\in\widehat{\mathbb B_r}=\mathbb B_r$. Moreover
\[
 |z_1z_3|\le\|z_1z_3\|_{K_{r,s}}\le s,\qquad
 |z_2z_3|\le\|z_2z_3\|_{K_{r,s}}\le s .
\]
Hence $z\in K_{r,s}$, that is, $\widehat{K_{r,s}}^{\,\cB}=K_{r,s}$. Since
$\cB\subset\cP^{\Gamma_0}$, the $\cP^{\Gamma_0}$-hull is contained in the
$\cB$-hull, and $K_{r,s}$ is also $\Gamma_0$-convex. In particular,
condition (v) of Theorem~\ref{thm:B} holds with $\cB_0=\cB$.

Condition (iii) can also be checked by hand, and this is where $r<\sqrt2$
enters a second time. With $q_1=q_2=1/2$ and $\beta^1=e_1$,
$\beta^2=e_2$, the Cauchy--Schwarz inequality gives
\[
 \Bigl\|\,1-\frac{z_1+z_2}2\Bigr\|_{K_{r,s}}
 =\frac12\,\bigl\|(1-z_1)+(1-z_2)\bigr\|_{K_{r,s}}
 \le\frac1{\sqrt2}\,\sup_{\mathbb B_r}\bigl(|1-z_1|^2+|1-z_2|^2\bigr)^{1/2}
 =\frac r{\sqrt2}<1 .
\]
Either way, the Oka--Weil property holds on $K_{r,s}$. Neither $|z_1|$ nor
$|z_2|$ is bounded below on $K_{r,s}$, but their maximum is.
\end{example}

\section{Cases in which the finite-stage condition holds}\label{sec:cases}

We first treat compact sets in the torus. For a compact Reinhardt set the
hull can be computed by an elementary argument, which also explains the
role of the lattice points of $\Gamma$. Let $\Gamma_M$ be the closed convex
cone generated by $M_\Gamma$, let
\[
 \Gamma_M^\vee=\bigl\{v\in\R^n:\ \langle v,s\rangle\ge0\ \text{for all}\
 s\in\Gamma_M\bigr\}
\]
be its dual cone, and let $h_A(s)=\max_{a\in A}\langle s,a\rangle$ be the
support function of a compact set $A\subset\R^n$. We write
$\Log z=(\log|z_1|,\ldots,\log|z_n|)$ for $z\in(\C^*)^n$.

\begin{proposition}\label{prop:reinhardt}
Let $\Gamma\subset\R^n_+$ be a convex cone and let $K\subset(\C^*)^n$ be
a compact Reinhardt set with $A=\Log K$. For $z\in(\C^*)^n$ the following
are equivalent:
\begin{enumerate}[label=\textup{(\arabic*)}]
\item $z\in\widehat K^\Gamma$;
\item $|z^\alpha|\le\|z^\alpha\|_K$ for all $\alpha\in M_\Gamma$;
\item $\Log z\in\conv(A)-\Gamma_M^\vee$.
\end{enumerate}
\end{proposition}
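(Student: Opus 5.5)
The plan is to prove (1)$\Rightarrow$(2)$\Rightarrow$(3)$\Rightarrow$(1).

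\emph{(1)$\Rightarrow$(2).} This is immediate, since each monomial $z^\alpha$ with $\alpha\in M_\Gamma$ belongs to $\cP^\Gamma$.

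\emph{(2)$\Rightarrow$(1).} I will use the Reinhardt symmetry through the standard Cauchy-estimate argument. Let $p=\sum c_\alpha z^\alpha\in\cP^\Gamma$. Since $K$ is invariant under $z\mapsto(e^{i\theta_1}z_1,\ldots,e^{i\theta_n}z_n)$, averaging $p(e^{i\theta}w)e^{-i\langle\alpha,\theta\rangle}$ over the torus gives
\[
 |c_\alpha|\,|w^\alpha|\le\|p\|_K \qquad\text{for every } w\in K .
\]
Hence $|c_\alpha|\,\|z^\alpha\|_K\le\|p\|_K$. This alone does not give $|p(z)|\le\|p\|_K$, because a sum of terms each bounded by $\|p\|_K$ can exceed $\|p\|_K$. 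The fix is to pass to powers: apply the estimate to $p^m$ and take $m$-th roots. The number of monomials in $p^m$ grows only polynomially in $m$, so
\[
 |p(z)|^m\le\sum_{\alpha}|c^{(m)}_\alpha|\,|z^\alpha|
 \le\sum_{\alpha}|c^{(m)}_\alpha|\,\|z^\alpha\|_K
 \le C m^{n}\|p^m\|_K .
\]
Taking $m$-th roots and letting $m\to\infty$ yields $|p(z)|\le\|p\|_K$. Note that the exponents of $p^m$ stay in $M_\Gamma$, since $M_\Gamma$ is a semigroup.

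\emph{(2)$\Leftrightarrow$(3).} Put $x=\Log z$. Then $\log\|z^\alpha\|_K=\max_{a\in A}\langle\alpha,a\rangle=h_A(\alpha)=h_{\conv A}(\alpha)$, so (2) reads
\[
 \langle\alpha,x\rangle\le h_{\conv A}(\alpha)\qquad\text{for all }\alpha\in M_\Gamma .
\]
This inequality is positively homogeneous in $\alpha$ and continuous, because support functions are continuous. It therefore extends from $M_\Gamma$ to the rational multiples $\Q_+M_\Gamma$, then to finite sums (by subadditivity of $h$), and by closure to all of $\Gamma_M$. Indeed, the closed convex cone generated by $M_\Gamma$ is the closure of nonnegative combinations of elements of $M_\Gamma$.

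It remains to identify $B=\conv(A)-\Gamma_M^\vee$. This set is closed and convex, being the sum of a compact convex set and a closed convex cone. Its support function is
\[
 h_B(s)=h_{\conv A}(s)+\sup_{v\in\Gamma_M^\vee}\langle s,-v\rangle ,
\]
which equals $h_{\conv A}(s)$ when $s\in(\Gamma_M^\vee)^\vee=\Gamma_M$ (bipolar theorem) and equals $+\infty$ otherwise. By Hahn--Banach, $x\in B$ exactly when $\langle s,x\rangle\le h_B(s)$ for all $s$, i.e.\ exactly when the extended inequality holds on $\Gamma_M$. Conversely, if $x\in B$ then the inequality holds on $\Gamma_M\supset M_\Gamma$ directly.

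\emph{Main obstacle.} I expect the delicate step to be the powers trick in (2)$\Rightarrow$(1). The monomial bounds from the Cauchy estimates must be combined with polynomial growth of the number of terms; the care needed is in bounding the count of lattice points of $p^m$, which lie in $m\cdot\conv(\operatorname{supp}p)$ and so number $O(m^n)$. The convex-analysis part is routine once the bipolar identity $(\Gamma_M^\vee)^\vee=\Gamma_M$ is used, which relies on $\Gamma_M$ being closed by definition.
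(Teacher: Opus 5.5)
Your overall route is the paper's. For (2)$\Rightarrow$(1) you use the Cauchy estimate on the Reinhardt set together with the power trick. For (2)$\Leftrightarrow$(3) you use the support-function and bipolar description of $\conv(A)-\Gamma_M^\vee$. Those parts are correct; your count of $O(m^n)$ lattice points in $m\conv(\operatorname{supp}p)$ serves as well as the paper's bound $\binom{k+N-1}{N-1}\le(k+1)^{N-1}$.

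The step that fails as written is the extension of $\langle\alpha,x\rangle\le h_A(\alpha)$ from $M_\Gamma$ to $\Gamma_M$ ``to finite sums (by subadditivity of $h$)''. Subadditivity, $h_A(s_1+s_2)\le h_A(s_1)+h_A(s_2)$, points the wrong way. From the inequalities at $s_1$ and $s_2$ you only get $\langle s_1+s_2,x\rangle\le h_A(s_1)+h_A(s_2)$, and this does not give $\langle s_1+s_2,x\rangle\le h_A(s_1+s_2)$. In fact the set $\{s:\ \langle s,x\rangle\le h_A(s)\}$ is in general not closed under addition. Take $A=\{e_1,e_2\}\subset\R^2$ and $x=(0.9,0.9)$: the inequality holds at $e_1$ and at $e_2$ but fails at $e_1+e_2$, since $1.8>1=h_A(e_1+e_2)$. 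So no property of $h_A$ alone can supply this step.

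What is needed is the semigroup property of $M_\Gamma$, which you already used in (2)$\Rightarrow$(1). Let $s=\sum_l(a_l/b)\alpha_l$ with $\alpha_l\in M_\Gamma$, $a_l\in\N$ and $b\ge1$. Then $bs=\sum_la_l\alpha_l\in M_\Gamma$, so (2) gives the inequality at $bs$, and positive homogeneity of both sides transfers it to $s$. Such rational combinations are dense in $\Gamma_M$, and both sides are continuous in $s$, so the inequality holds on all of $\Gamma_M$. This is exactly how the paper argues, and with this replacement your proof is complete. Note that the delicate point was in the convex-analysis part you called routine, not in the power trick.
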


\begin{proof}
Clearly (1) implies (2). Assume (2) and let
$p=\sum c_\alpha z^\alpha\in\cP^\Gamma$. For $w\in K$ the torus
$\{(w_1e^{i\theta_1},\ldots,w_ne^{i\theta_n})\}$ lies in $K$, and the Cauchy formula
\[
 c_\alpha w^\alpha=\frac1{(2\pi)^n}\int_{[0,2\pi]^n}
 p\bigl(w_1e^{i\theta_1},\ldots,w_ne^{i\theta_n}\bigr)\,
 e^{-i\langle\alpha,\theta\rangle}\,d\theta
\]
gives $|c_\alpha|\,|w^\alpha|\le\|p\|_K$. Hence
\[
 |c_\alpha|\,\|z^\alpha\|_K\le\|p\|_K
 \qquad\text{and}\qquad
 |p(z)|\le\sum_\alpha|c_\alpha|\,|z^\alpha|\le N(p)\,\|p\|_K,
\]
where $N(p)$ is the number of monomials of $p$. Applying this to $p^k\in\cP^\Gamma$, whose number of monomials is at most
$\binom{k+N-1}{N-1}\le(k+1)^{N-1}$ when $p$ has $N$ monomials, we get
\[
 |p(z)|^k\le(k+1)^{N-1}\,\|p\|_K^k ,
\]
and letting $k\to\infty$ after taking $k$-th roots gives
$|p(z)|\le\|p\|_K$. This use of high powers is the standard device for
passing from estimates of coefficients to estimates of values.

For (2)$\Leftrightarrow$(3) note that $\|z^\alpha\|_K=e^{h_A(\alpha)}$,
so that (2) says
\[
 \langle\alpha,x\rangle\le h_A(\alpha)\qquad
 \text{for } x=\Log z \text{ and all } \alpha\in M_\Gamma .
\]
This inequality extends to $\Gamma_M$. Indeed, if
$s=\sum_l(a_l/b)\alpha_l$ with $\alpha_l\in M_\Gamma$ and nonnegative
integers $a_l$, $b\ge1$, then $bs\in M_\Gamma$ because $M_\Gamma$ is a
semigroup, and the positive homogeneity of $h_A$ gives
$\langle s,x\rangle=b^{-1}\langle bs,x\rangle\le b^{-1}h_A(bs)=h_A(s)$;
such $s$ are dense in $\Gamma_M$, and both sides are continuous in $s$. The closed
convex set $\conv(A)-\Gamma_M^\vee$ has support function $h_A$ on
$\Gamma_M$ and $+\infty$ elsewhere, because
$\Gamma_M^{\vee\vee}=\Gamma_M$; hence
\[
 \conv(A)-\Gamma_M^\vee=\bigl\{x\in\R^n:\ \langle s,x\rangle\le h_A(s)\
 \text{for all}\ s\in\Gamma_M\bigr\}. \qedhere
\]
\end{proof}

This recovers the description of Reinhardt hulls in
\cite[Corollary~3.3 and Proposition~3.4]{Sig}, the latter being a
modification of \cite[Proposition~5.6]{MSSS}, without extremal functions;
the convex-geometric part of the proof is the same as there.
There the cone is $\R_+S$ and $S$ is normalized so that
$S=\overline{S\cap\Q^n}$; the cone $\Gamma_M$ makes this normalization
automatic.

\begin{example}\label{ex:irrational}
Let $S=\conv\{0,(1,\sqrt2)\}$. The ray $\R_+S$ contains no nonzero lattice
point, so that
\[
 \cP^S=\C,\qquad \widehat{\T^2}^{\,S}=\C^2,
\]
in accordance with $\Gamma_M=\{0\}$. The cone $\R_+S$ itself would give
the half-plane $\{x_1+\sqrt2\,x_2\le0\}$ in logarithmic coordinates.
\end{example}

\begin{corollary}\label{cor:torus}
Let $S\subset\R^n_+$ be compact and convex with $0\in S$, assume that
$\Gamma_S=\R_+S$ has nonempty interior, and let $K\subset(\C^*)^n$ be a
compact $S$-convex set. Then the conditions of Theorem~\ref{thm:B} hold
for $K$. More precisely, there are $\beta\in G_S$ with all coordinates
positive and $q\in\cP^S$ such that $\|1-z^\beta q\|_K<1$.
\end{corollary}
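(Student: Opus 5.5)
The plan is to combine Sigurðardóttir's theorem \cite[Theorem~1.1]{Sig} with Theorem~\ref{thm:B}, and then extract the sharper certificate from a single invertible monomial.

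Since $K\subset(\C^*)^n$ is compact and $S$-convex, \cite[Theorem~1.1]{Sig} gives the Oka--Weil property on $K$. This is condition (i) of Theorem~\ref{thm:B}, so all the conditions of Theorem~\ref{thm:B} hold, and in particular (ii): every character of $A_S(K)$ is the evaluation at a point of $K$.

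For the sharper statement we need a good exponent with all coordinates positive. Since $\Gamma_S$ has nonempty interior, the interior contains a rational point, and hence an integer point $\gamma$ with all $\gamma_i>0$. Because $\gamma$ is interior, $k\gamma+e_j=k(\gamma+e_j/k)\in\Gamma_S$ for all large $k$ and all $j$. Taking one such $k$ for all $j$, the exponent $\beta=k\gamma$ lies in $G_S$, and all its coordinates are positive. This is the same argument as in the proof of Lemma~\ref{lem:strata}, applied with $J=\{1,\ldots,n\}$.

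Now $z^\beta$ has no zeros on $K\subset(\C^*)^n$. By (ii) no character of $A_S(K)$ vanishes on $z^\beta$, so (G3) gives $h\in A_S(K)$ with $z^\beta h=1$ on $K$. Choose $q\in\cP^S$ with
\[
 \|h-q\|_K<\eta,\qquad \eta\,\|z^\beta\|_K<1 .
\]
Then
\[
 \|1-z^\beta q\|_K=\|z^\beta(h-q)\|_K<1 .
\]

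The main point is that the hard analytic input, the Oka--Weil property on toric sets, is imported from \cite{Sig}. The only point needing care is that Sigurðardóttir's hypotheses match ours. Her normalization $S=\overline{S\cap\Q^n}$ might differ from our $S$, but $S$-convexity depends only on $\cP^S$, so replacing $S$ by the closure of its rational points does not change the ring. If that reduction were problematic, a fallback would be to verify condition (ii) directly on the torus using Proposition~\ref{prop:reinhardt}, but I expect the citation to suffice.
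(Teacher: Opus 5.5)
Your proof is correct and follows essentially the paper's route: you use the same interior lattice point $\gamma$ and good exponent $\beta=k\gamma$, and \cite[Theorem~1.1]{Sig} is the only analytic input. The one difference is a small detour: the $h$ you get from (ii) and (G3) is just $z^{-\beta}$ on $K$, so the paper applies \cite[Theorem~1.1]{Sig} directly to $z^{-\beta}$, which is holomorphic near $K$, and obtains $\|1-z^\beta q\|_K\le\|z^\beta\|_K\,\|z^{-\beta}-q\|_K<1$.
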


\begin{proof}
Let $\gamma$ be a lattice point in the interior of $\Gamma_S$, which
exists because the interior is a nonempty open cone; its coordinates are
positive because $\Gamma_S\subset\R^n_+$. The same choice of an interior
lattice point is used in \cite[Proposition~2.1]{Sig}. Then $k\gamma+e_j\in\Gamma_S$ for all $j$ and
all large $k$, so $\beta=k\gamma\in G_S$ for a suitable $k$. The function
$z^{-\beta}$ is holomorphic in a neighbourhood of $K$, and Sigurðardóttir's
theorem \cite[Theorem~1.1]{Sig} gives $q\in\cP^S$ with
$\|z^{-\beta}-q\|_K<1/\|z^\beta\|_K$. Then
\[
 \|1-z^\beta q\|_K\le\|z^\beta\|_K\,\|z^{-\beta}-q\|_K<1,
\]
which is condition~(iii) of Theorem~\ref{thm:B}.
\end{proof}

Corollary~\ref{cor:torus} adds nothing to the approximation theorem on
the torus, but it supplies the finite-stage compactness on which the
following arguments rely. Outside the torus we use coordinate directions
along which the compact set stays away from the origin. For a nonempty
proper subset $I\subset\{1,\ldots,n\}$ we write $\pr_I(z)=(z_i)_{i\in I}$
and consider the following conditions on a convex cone $\Gamma$:
\begin{enumerate}[label=\textup{(A\arabic*)}]
\item $e_i\in\Gamma$ for every $i\in I$, and for all $i\in I$ and
$k\notin I$ there is an integer $m_{ik}\ge1$ with
$m_{ik}e_i+e_k\in\Gamma$;
\item every $\alpha\in M_\Gamma\setminus\{0\}$ satisfies
$\alpha-\delta e_i\in\Gamma$ for some $i\in I$ and some $\delta>0$.
\end{enumerate}
Condition (A2) says that no nonzero lattice point of $\Gamma$ lies on the
part of the boundary of $\Gamma$ that is reached by moving in one of the
directions $-e_i$, $i\in I$. The way these conditions are used, namely
first to bound the anchor coordinates $z_i$, $i\in I$, from below and then
to bound the other coordinates through the monomials $z_i^{m_{ik}}z_k$, is
similar in spirit to the argument showing that monomial maps are proper on
the torus \cite[Lemma~2.3 and Proposition~2.4]{Sig}.

\begin{theorem}\label{thm:anchor}
Let $\Gamma\subset\R^n_+$ be a convex cone, let
$\varnothing\ne I\subsetneq\{1,\ldots,n\}$, assume \textup{(A1)}, and let
$K\subset\C^n$ be compact.
\begin{enumerate}[label=\textup{(\alph*)}]
\item If $0\notin\widehat{\pr_I(K)}$, the ordinary polynomial hull in
$\C^I$, then $\widehat K^\cB$ is compact for
\[
 \cB=\C\bigl[z_i,\ z_i^{m_{ik}}z_k:\ i\in I,\ k\notin I\bigr].
\]
\item If $\Gamma$ satisfies \textup{(A2)} and $K$ is $\Gamma$-convex, then
$\widehat K^\cB$ is compact for some finitely generated monomial algebra
$\cB\subset\cP^\Gamma$.
\end{enumerate}
In both cases, if $K$ is $\Gamma$-convex, the Oka--Weil property holds on
$K$.
\end{theorem}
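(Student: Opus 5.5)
For (a), take $z\in\widehat K^\cB$ and bound its coordinates. Every polynomial in the variables $z_i$, $i\in I$, belongs to $\cB$, so
\[
 |q(\pr_I z)|\le\|q\|_K=\|q\|_{\pr_I(K)}
\]
for all polynomials $q$ on $\C^I$; hence $\pr_I(z)\in\widehat{\pr_I(K)}$, which is compact. Since $0\notin\widehat{\pr_I(K)}$, compactness gives $c>0$ with $\max_{i\in I}|w_i|\ge c$ on $\widehat{\pr_I(K)}$, and also $|z_i|\le\|z_i\|_K$. So some $i\in I$ has $|z_i|\ge c$. Then for each $k\notin I$,
\[
 |z_k|=\frac{|z_i^{m_{ik}}z_k|}{|z_i|^{m_{ik}}}\le\frac{\|z_i^{m_{ik}}z_k\|_K}{c^{m_{ik}}},
\]
and taking the maximum over $i\in I$ gives a uniform bound. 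Thus $\widehat K^\cB$ is closed and bounded, hence compact. The generators lie in $\cP^\Gamma$ by (A1).

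For (b), it suffices to show $0\notin\widehat{\pr_I(K)}$ and then apply (a). Suppose instead that $0\in\widehat{\pr_I(K)}$. The aim is to produce a point of $\widehat K^\Gamma$ with all $I$-coordinates zero, and to show such points cannot lie in $K$.

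\emph{Claim:} every $\alpha\in M_\Gamma\setminus\{0\}$ has $\alpha_i>0$ for some $i\in I$. If $\alpha_i=0$ for all $i\in I$, then $\alpha-\delta e_i\notin\R^n_+\supset\Gamma$, contradicting (A2).

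Now fix a point $w\in K$ whose $I$-part is $0$ but whose other coordinates are arbitrary. By the claim, every nonconstant monomial of $\cP^\Gamma$ vanishes at $w$, so each $p\in\cP^\Gamma$ equals its constant term there, and $|p(w)|=|p(0)|$. The key step is to show that for every $p\in\cP^\Gamma$,
\[
 |p(0)|\le\|p\|_K .
\]
The plan is to derive this from $0\in\widehat{\pr_I(K)}$ by a weighted dilation argument, which is the main obstacle. Let $\mu$ be a Jensen measure on $\pr_I(K)$ for $0$; one exists because $0$ lies in the polynomial hull. The difficulty is that $p$ also depends on $z_k$, $k\notin I$, so a measure on $\pr_I(K)$ does not directly control $p$.

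\emph{Alternative route:} use $\Gamma$-convexity together with Corollary~\ref{cor:goodcover}, which gives $K\subset U_\Gamma$ with good exponents $\beta$. Consider the family $z\mapsto(tz_I,z_{I^c})$ and exploit that each monomial of $p$ contains a positive power of some $z_i$, $i\in I$.

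Honestly, I expect the argument to go instead through the fibre $F(w)$. If $0\in\widehat{\pr_I(K)}$, then the one-variable maximum principle applied along analytic discs should put points with $I$-part $0$ into $\widehat K^\Gamma=K$. But at such points every monomial of $\cP^\Gamma$ other than $1$ vanishes, so the fibre is all of $\{0\}\times\C^{I^c}$, which is unbounded. This contradicts $K\subset U_\Gamma$ via Lemma~\ref{lem:strata}. Making the step from $0\in\widehat{\pr_I(K)}$ to a point of $K$ with $I$-part zero rigorous is the hardest part, and I would try it first via the Jensen-measure inequality $|p(0,\cdot)|\le\|p\|_K$.

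Finally, once $\widehat K^\cB$ is compact for a finitely generated $\cB\subset\cP^\Gamma$, the Oka--Weil property for $\Gamma$-convex $K$ follows from (v)$\Rightarrow$(i) of Theorem~\ref{thm:B}.
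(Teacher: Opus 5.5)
Your part (a) is exactly the paper's argument, and so is your closing appeal to (v)$\Rightarrow$(i) of Theorem~\ref{thm:B}. Part (b), however, rests on a reduction that is false. For a $\Gamma$-convex $K$ satisfying (A1)--(A2) one can have $0\in\widehat{\pr_I(K)}$. So neither the Jensen-measure inequality $|p(0)|\le\|p\|_K$ nor the ``analytic disc'' route to a point of $\widehat K^\Gamma$ in $V_I=\{z_i=0,\ i\in I\}$ can be made to work.

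Here is a counterexample. Take $n=2$, $I=\{1\}$, $\Gamma=\{x\in\R^2_+:\ x_2\le\sqrt2\,x_1\}$ and $K=\{(\zeta,\zeta^{-1}):\ |\zeta|=1\}$. The cone satisfies (A1) with $m_{12}=1$, and (A2) because $b<\sqrt2\,a$ for every nonzero lattice point $(a,b)\in\Gamma$. A point $w\in\widehat K^\Gamma$ satisfies:
\begin{itemize}
\item $w_1w_2=1$, using $1-z_1z_2$;
\item $|w_1|\le1$;
\item $|w_2|=|w_1^3w_2^4|\le1$, since $(3,4)\in\Gamma$.
\end{itemize}
Hence $|w_1|=1$ and $w\in K$, so $K$ is $\Gamma$-convex. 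Yet $\pr_1(K)$ is the unit circle, so $0\in\widehat{\pr_1(K)}$. The polynomial $p=1-z_1z_2$ has $|p(0)|=1>0=\|p\|_K$. The hull of $K$ for $\C[z_1,z_1z_2]$ is the unbounded set $\{z_1z_2=1,\ |z_1|\le1\}$. So in (b) the algebra of (a) must genuinely be enlarged, and the lower bound $\max_{i\in I}|z_i|\ge\varepsilon$ has to be proved on $\widehat K^\cB$, not on $\widehat{\pr_I(K)}$.

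The missing idea is a quantitative use of (A2). Your Claim that every nonzero $\alpha\in M_\Gamma$ has a positive $I$-coordinate is correct. So is the consequence that $K$ cannot meet $V_I$ because the fibres there are unbounded. These are the paper's first step, and they give $0\notin K=\widehat K^\Gamma$. Hence there is some $p=c+\sum_{\alpha\in F}c_\alpha z^\alpha\in\cP^\Gamma$ with $|c|>\|p\|_K$.

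For each $\alpha\in F$, (A2) gives $i(\alpha)\in I$ and integers $N_\alpha,j_\alpha\ge1$ with $\gamma_\alpha=N_\alpha\alpha-j_\alpha e_{i(\alpha)}\in M_\Gamma$. Add $z^\alpha$ and $z^{\gamma_\alpha}$ to the generators $z_i$, $z_i^{m_{ik}}z_k$. Then every $z\in\widehat K^\cB$ satisfies $|z^\alpha|^{N_\alpha}\le|z_{i(\alpha)}|^{j_\alpha}\,\|z^{\gamma_\alpha}\|_K$. Consequently $p(z)\to c$ as $\max_{i\in I}|z_i|\to0$, which contradicts $|p(z)|\le\|p\|_K<|c|$.

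This yields $\max_{i\in I}|z_i|\ge\varepsilon$ on $\widehat K^\cB$. The (A1) monomials then bound the remaining coordinates exactly as in your part (a). In the example above the construction is $\alpha=(1,1)$ with $N_\alpha=4$, $j_\alpha=1$, giving $\gamma_\alpha=(3,4)$, which is precisely the monomial that cuts the hull down to $K$.
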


\begin{proof}
In both parts we show that, for a suitable finitely generated algebra
$\cB\supset\C[z_i,z_i^{m_{ik}}z_k]$ and some $\varepsilon>0$,
\begin{equation}\label{eq:anchorbound}
 \rho(z):=\max_{i\in I}|z_i|\ge\varepsilon\qquad
 \text{for all }z\in\widehat K^\cB .
\end{equation}
This suffices. On $\widehat K^\cB$ we have $|z_i|\le\|z_i\|_K$ for
$i\in I$, and for $k\notin I$ we choose $i\in I$ with
$|z_i|\ge\varepsilon$ and obtain
\[
 |z_k|\le\varepsilon^{-m_{ik}}\,\|z_i^{m_{ik}}z_k\|_K .
\]
Thus the closed set $\widehat K^\cB$ is bounded, and Theorem~\ref{thm:B}
gives the Oka--Weil property when $K$ is $\Gamma$-convex.

(a) The algebra $\cB$ contains all polynomials in $(z_i)_{i\in I}$, so
\[
 \pr_I\bigl(\widehat K^\cB\bigr)\subset\widehat{\pr_I(K)} .
\]
The latter is a compact set not containing $0$, which gives
\eqref{eq:anchorbound}.

(b) By (A2), every nonzero $\alpha\in M_\Gamma$ has a positive coordinate
indexed by $I$. Hence every element of $\cP^\Gamma$ is constant on the
unbounded set
\[
 V_I=\{z\in\C^n:\ z_i=0\ \text{for}\ i\in I\},
\]
and the compact $\Gamma$-convex set $K$ does not meet $V_I$; in
particular $0\notin K$. Choose
\[
 p=c+\sum_{\alpha\in F}c_\alpha z^\alpha\in\cP^\Gamma,\qquad
 F\subset M_\Gamma\setminus\{0\}\ \text{finite},\qquad
 |c|=|p(0)|>\|p\|_K .
\]
For $\alpha\in F$ choose $i(\alpha)\in I$ and $\delta_\alpha>0$ as in
(A2), and integers $N_\alpha,j_\alpha\ge1$ with
$j_\alpha/N_\alpha<\delta_\alpha$. The point
$\alpha-(j_\alpha/N_\alpha)e_{i(\alpha)}$ lies on the segment between
$\alpha$ and $\alpha-\delta_\alpha e_{i(\alpha)}$, hence in $\Gamma$, and
therefore
\[
 \gamma_\alpha:=N_\alpha\alpha-j_\alpha e_{i(\alpha)}
 =N_\alpha\Bigl(\alpha-\frac{j_\alpha}{N_\alpha}\,e_{i(\alpha)}\Bigr)
 \in M_\Gamma .
\]
Let $\cB$ be generated by the $z_i$, the $z_i^{m_{ik}}z_k$, and the
monomials $z^\alpha$ and $z^{\gamma_\alpha}$ for $\alpha\in F$. The
identity $N_\alpha\alpha=j_\alpha e_{i(\alpha)}+\gamma_\alpha$ gives, for
$z\in\widehat K^\cB$,
\[
 |z^\alpha|
 \le|z_{i(\alpha)}|^{j_\alpha/N_\alpha}\,
 \|z^{\gamma_\alpha}\|_K^{1/N_\alpha}
 \le\rho(z)^{j_\alpha/N_\alpha}\,\|z^{\gamma_\alpha}\|_K^{1/N_\alpha}.
\]
Consequently, for $z\in\widehat K^\cB$,
\[
 |p(z)-c|\le\sum_{\alpha\in F}|c_\alpha|\,
 \rho(z)^{j_\alpha/N_\alpha}\,\|z^{\gamma_\alpha}\|_K^{1/N_\alpha}.
\]
The right-hand side tends to $0$ as $\rho(z)\to0$, so there is
$\varepsilon>0$ such that it is smaller than $|c|-\|p\|_K$ whenever
$\rho(z)<\varepsilon$. For such $z$ the reverse triangle inequality gives
$|p(z)|>\|p\|_K$, which is impossible for $z\in\widehat K^\cB$ because
$p\in\cB$. Hence $\rho\ge\varepsilon$ on $\widehat K^\cB$, which is
\eqref{eq:anchorbound}.
\end{proof}

\begin{example}\label{ex:pi}
For $n\ge2$ let
\[
 \Gamma_\pi=\Bigl\{x\in\R^n_+:\
 x_1\ge\pi\bigl(x_2^2+\cdots+x_n^2\bigr)^{1/2}\Bigr\},
\]
a closed convex cone with nonempty interior, nonpolyhedral for $n\ge3$, and
equal to $\R_+S$ for $S=\Gamma_\pi\cap\{x_1+\cdots+x_n\le1\}$. If
$\alpha\in M_{\Gamma_\pi}$ and $\alpha_j\ne0$ for some $j\ge2$, then
equality in the defining inequality would make $\pi^2$ rational; hence
the inequality is strict and $\alpha-\delta e_1\in\Gamma_\pi$ for small
$\delta>0$, which is also clear for $\alpha\in\N e_1$. Moreover $e_1$ and
$4e_1+e_j$ lie in $\Gamma_\pi$. Thus (A1) and (A2) hold with $I=\{1\}$, and
the Oka--Weil property holds on every compact $\Gamma_\pi$-convex subset of
$\C^n$. For instance, if $|c|>r>0$ and $s_j>0$, then
\[
 \bigl\{z\in\C^n:\ |z_1-c|\le r,\ |z_1^4z_j|\le s_j\ (2\le j\le n)\bigr\}
\]
is such a set, since the map $(z_1,z_1^4z_2,\ldots,z_1^4z_n)$ identifies it
with a closed polydisc; it meets every coordinate hyperplane
$\{z_j=0\}$, $j\ge2$, and all their intersections.
\end{example}

For the cone $\Gamma_0$ of Example~\ref{ex:certificate}, condition (A2)
fails for $I=\{1,2\}$: the lattice point $(1,0,1)$ lies on the boundary,
and neither $(1,0,1)-\delta e_1$ nor $(1,0,1)-\delta e_2$ belongs to
$\Gamma_0$ for $\delta>0$. Theorem~\ref{thm:anchor}(a) still applies to
the set $K_{r,s}$ of \eqref{eq:Krs}, because $\pr_{\{1,2\}}(K_{r,s})$ is a
closed Euclidean ball not containing the origin; this gives a second proof
of the conclusion of Example~\ref{ex:certificate}.

Condition (A1) requires coordinate vectors in $\Gamma$. When $\Gamma$
contains none, one can instead use a block of coordinates on which the
compact set stays in the torus, and transfer
Corollary~\ref{cor:torus} from that block. For $J\subset\{1,\ldots,n\}$ we
write $S_J=S\cap\R^J$; then $\R_+S_J=\Gamma_S\cap\R^J$, and we regard
$\cP^{S_J}$ as a ring of polynomials on $\C^J$.

\begin{proposition}[Block criterion]\label{prop:block}
Let $S\subset\R^n_+$ be compact and convex with $0\in S$, let
$\Gamma=\Gamma_S$, and let $\varnothing\ne J\subsetneq\{1,\ldots,n\}$.
Assume that
\begin{enumerate}[label=\textup{(B\arabic*)}]
\item $M_J=M_\Gamma\cap\R^J$ spans $\R^J$;
\item for every $k\notin J$ there is $\alpha^{(k)}\in M_\Gamma$ supported
in $J\cup\{k\}$ with $\alpha^{(k)}_k>0$.
\end{enumerate}
Let $K\subset\C^n$ be compact and assume that the hull $H_J$ of
$\pr_J(K)$ with respect to $\cP^{S_J}$ in $\C^J$ is a compact subset of
$(\C^*)^J$. Then $\widehat K^\cB$ is compact for some finitely generated
monomial algebra $\cB\subset\cP^S$. In particular, if $K$ is $S$-convex,
the Oka--Weil property holds on $K$.
\end{proposition}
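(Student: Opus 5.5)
The plan is to reduce to a coordinate bound on $\widehat K^\cB$, as in the proof of Theorem~\ref{thm:anchor}. The block $J$ will play the role of the anchor coordinates: first bound $z_J$ into a compact subset of $(\C^*)^J$, then bound the remaining coordinates through the monomials $z^{\alpha^{(k)}}$.

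\emph{Step 1 (the block).} Since $M_J$ spans $\R^J$, the cone $\Gamma\cap\R^J=\R_+S_J$ has nonempty interior in $\R^J$. The set $H_J\subset(\C^*)^J$ is compact and $S_J$-convex in $\C^J$: it is the hull of $\pr_J(K)$, and hulls are idempotent. Corollary~\ref{cor:torus}, applied in $\C^J$ to $S_J$, shows that the conditions of Theorem~\ref{thm:B} hold for $H_J$. By (v) there is a finitely generated monomial algebra $\cB_J\subset\cP^{S_J}$ such that $\widehat{H_J}^{\cB_J}$ is compact. By the last assertion of Theorem~\ref{thm:B}, we may even arrange that this hull lies in a prescribed neighbourhood of $H_J$ with compact closure in $(\C^*)^J$. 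For this, refine the bounded sparse polyhedron and add its defining functions to the generators, as in the proof of (iv)$\Rightarrow$(v). Then there are constants $0<a\le b$ with $a\le|z_j|\le b$ for $j\in J$ and all $z_J\in\widehat{H_J}^{\cB_J}$.

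\emph{Step 2 (lifting to $\C^n$).} Every element of $\cP^{S_J}$ is a polynomial in $z_J$ with exponents in $M_J\subset M_\Gamma$, so it lies in $\cP^S$. Its values on $K$ are its values on $\pr_J(K)$, and $\|b\|_{\pr_J(K)}=\|b\|_{H_J}$ for $b\in\cB_J$. Let $\cB$ be generated by $\cB_J$ and the monomials $z^{\alpha^{(k)}}$, $k\notin J$. For $z\in\widehat K^\cB$ and $b\in\cB_J$ we get $|b(z_J)|\le\|b\|_{H_J}$, hence $z_J\in\widehat{H_J}^{\cB_J}$. This is where the identification of the $\cB_J$-hull of $H_J$ with that of $\pr_J(K)$ is used.

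\emph{Step 3 (other coordinates).} For $k\notin J$ write $z^{\alpha^{(k)}}=z_J^{\alpha'}z_k^{\alpha^{(k)}_k}$. On $\widehat K^\cB$ we have $|z^{\alpha^{(k)}}|\le\|z^{\alpha^{(k)}}\|_K$ and $|z_J^{\alpha'}|\ge a^{|\alpha'|}$ (with the convention $a\le1$). Together these give
\[
|z_k|^{\alpha^{(k)}_k}\le a^{-|\alpha'|}\|z^{\alpha^{(k)}}\|_K .
\]
Thus $\widehat K^\cB$ is closed and bounded, hence compact. If $K$ is $S$-convex, condition (v) of Theorem~\ref{thm:B} holds, which gives the Oka--Weil property.

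The main obstacle is Step 1. Corollary~\ref{cor:torus} needs $\R_+S_J$ to have nonempty interior in $\R^J$, which is (B1). One must also check that Theorem~\ref{thm:B} indeed yields a finitely generated $\cB_J$ whose hull stays inside $(\C^*)^J$, not merely a bounded hull. The neighbourhood-basis statement at the end of Theorem~\ref{thm:B} is what provides this.

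Hypothesis (B2) is used only to control the coordinates $z_k$, $k\notin J$. (B1) and (B2) together also ensure that the points of $K$ with $z_J\in(\C^*)^J$ lie in $U_\Gamma$, by Lemma~\ref{lem:strata}, although the argument above does not need this explicitly.
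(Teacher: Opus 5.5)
Your proof is correct. Steps 2 and 3 (adding the monomials $z^{\alpha^{(k)}}$ and bounding $z_k$ once $z_J$ is confined to a compact subset of $(\C^*)^J$) are exactly the paper's argument. The genuine difference is in Step 1, where you obtain a finitely generated $\cB_J$ whose hull of $\pr_J(K)$ stays in a compact subset of the torus.

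\emph{The paper's route.} It uses the sharper form of Corollary~\ref{cor:torus}: a single $\beta\in G_{S_J}$ with all coordinates positive and $q\in\cP^{S_J}$ with $\vartheta=\|1-z^\beta q\|_{H_J}<1$. It takes $\cB_J$ generated by the monomials of $q$ and by $z^\beta$, $z^{\beta+e_j}$. It then computes directly $|w^\beta|\ge(1-\vartheta)/\|q\|_{H_J}$ and upper bounds on $|w_j|$ via $w_j=w^{\beta+e_j}/w^\beta$. Lower bounds on each $|w_j|$ follow from the positivity of every $\beta_j$.

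\emph{Your route.} You use only that the conditions of Theorem~\ref{thm:B} hold for $H_J$. You apply the neighbourhood-basis assertion at the end of that theorem to a neighbourhood with compact closure in $(\C^*)^J$, and then the (iv)$\Rightarrow$(v) construction. This is valid: $H_J$ is $S_J$-convex because $\|p\|_{H_J}=\|p\|_{\pr_J(K)}$ for $p\in\cP^{S_J}$, which is what that assertion needs to separate the points of $\overline D\setminus\Omega$.

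\emph{Comparison.} Your route avoids the positivity of $\beta$ and is quicker to state. However, it is non-constructive: it rests on a compactness and covering argument and yields neither explicit generators nor explicit constants $a,b$. The paper's route is fully explicit. It uses one polynomial $q$ and the good monomials, with quantitative bounds in the spirit of \eqref{eq:coordbound}.

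A minor point: the convention $a\le1$ in Step 3 is unnecessary, since $\prod_{j\in J}|z_j|^{\alpha'_j}\ge a^{|\alpha'|}$ holds for any $a>0$.
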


\begin{proof}
The argument combines Corollary~\ref{cor:torus} with lower bounds for
coordinates obtained from monomial inequalities, as in
\cite[Lemma~2.3]{Sig}. By (B1) the cone $\R_+S_J$ has nonempty interior in
$\R^J$. The set $H_J$
is a compact $S_J$-convex subset of $(\C^*)^J$, and
Corollary~\ref{cor:torus}, applied in $\C^J$, gives $\beta\in G_{S_J}$
with all coordinates positive and $q\in\cP^{S_J}$ with
\[
 \vartheta:=\|1-z^\beta q\|_{H_J}<1 .
\]
Let $\cB_J$ be generated by the monomials of $q$ and by $z^\beta$ and
$z^{\beta+e_j}$, $j\in J$. Since $\|b\|_{H_J}=\|b\|_{\pr_J(K)}$ for
$b\in\cP^{S_J}$, every $w$ in the $\cB_J$-hull $H'$ of $\pr_J(K)$ satisfies
\[
 |1-w^\beta q(w)|\le\vartheta,\qquad |q(w)|\le\|q\|_{H_J},\qquad
 |w^{\beta+e_j}|\le\|z^{\beta+e_j}\|_{H_J},
\]
and hence
\[
 |w^\beta|\ge\frac{1-\vartheta}{\|q\|_{H_J}}>0,\qquad
 |w_j|=\frac{|w^{\beta+e_j}|}{|w^\beta|}\le
 \frac{\|q\|_{H_J}\,\|z^{\beta+e_j}\|_{H_J}}{1-\vartheta}
 \quad(j\in J).
\]
Since all coordinates of $\beta$ are positive, these bounds give, for
each $j\in J$,
\[
 |w_j|^{\beta_j}=\frac{|w^\beta|}{\prod_{l\in J,\,l\ne j}|w_l|^{\beta_l}}
 \ge\frac{1-\vartheta}{\|q\|_{H_J}}
 \prod_{l\in J,\,l\ne j}\Bigl(\frac{1-\vartheta}
 {\|q\|_{H_J}\,\|z^{\beta+e_l}\|_{H_J}}\Bigr)^{\beta_l}>0 .
\] Thus $H'$ is a compact subset of $(\C^*)^J$.

Let $\cB\subset\cP^S$ be generated by the monomials of $\cB_J$, regarded
as polynomials on $\C^n$, and by $z^{\alpha^{(k)}}$, $k\notin J$. If
$z\in\widehat K^\cB$, then $\pr_J(z)\in H'$, so that
$0<c\le|z_j|\le C$ for $j\in J$ with constants independent of $z$. For
$k\notin J$ the inequality $|z^{\alpha^{(k)}}|\le\|z^{\alpha^{(k)}}\|_K$
gives
\[
 |z_k|^{\alpha^{(k)}_k}\le
 \frac{\|z^{\alpha^{(k)}}\|_K}{\prod_{j\in J}|z_j|^{\alpha^{(k)}_j}}
 \le\frac{\|z^{\alpha^{(k)}}\|_K}{\prod_{j\in J}c^{\,\alpha^{(k)}_j}} .
\]
Hence $\widehat K^\cB$ is bounded, and it is closed. The last assertion
follows from Theorem~\ref{thm:B}.
\end{proof}

\begin{example}\label{ex:block}
In $\R^3_+$ let
\[
 \Gamma=\Bigl\{x\in\R^3_+:\ 2x_2\le x_1\le3x_2,\
 x_3\le\sqrt{x_1x_2}\Bigr\},
\]
a closed convex cone with nonempty interior which is not polyhedral and
equals $\R_+S$ for $S=\Gamma\cap\{x_1+x_2+x_3\le1\}$. It contains none of
$e_1,e_2,e_3$, so condition (A1) fails for every $I$. For $J=\{1,2\}$ the
lattice points
\[
 \alpha^1=(2,1,0),\qquad \alpha^2=(3,1,0),\qquad \alpha^{(3)}=(2,1,1)
\]
belong to $\Gamma$; the first two span $\R^J$, so (B1) and (B2) hold. Put
\[
 u=z_1^2z_2,\qquad v=z_1^3z_2,\qquad w=z_1^2z_2z_3 .
\]
On $\{z_1z_2\ne0\}$ the map $(u,v,w)$ is a bijection onto
$\{uv\ne0\}\times\C$, with inverse
\[
 z_1=\frac vu,\qquad z_2=\frac{u^3}{v^2},\qquad z_3=\frac wu .
\]
Fix $c_1,c_2\in\C^*$ and $0<r_j<|c_j|$, and let
\[
 Q=\bigl\{(z_1,z_2)\in\C^2:\ |u-c_1|\le r_1,\ |v-c_2|\le r_2\bigr\}.
\]
Then $Q$ is a compact $S_J$-convex subset of $(\C^*)^2$, since $(u,v)$
identifies it with a closed bidisc in $(\C^*)^2$. Consequently, every
compact $\Gamma$-convex set $K\subset\C^3$ with $\pr_J(K)\subset Q$ satisfies
the hypothesis of Proposition~\ref{prop:block}, because $H_J\subset Q$,
and the Oka--Weil property holds on $K$. For instance,
\[
 K=\bigl\{z\in\C^3:\ |u-c_1|\le r_1,\ |v-c_2|\le r_2,\ |w|\le s\bigr\}
\]
is such a set; the map $(u,v,w)$ identifies it with a closed polydisc, and
it meets the plane $\{z_3=0\}$.
\end{example}

In two variables the conditions (A1)--(A2) cover everything that is not
already rational polyhedral or contained in the torus.

\begin{corollary}\label{cor:dimtwo}
Let $S\subset\R^2_+$ be compact and convex with $0\in S$. Then the
conditions of Theorem~\ref{thm:B} hold, and the Oka--Weil property holds,
for every compact $S$-convex set $K\subset\C^2$.
\end{corollary}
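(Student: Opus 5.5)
We argue by cases on the position of $K$ relative to the coordinate axes, as suggested by the sentence preceding the statement. By Corollary~\ref{cor:goodcover} we may assume that $\Gamma=\Gamma_S$ has nonempty interior and that $0\notin K$ unless $\Gamma=\R^2_+$. If $\Gamma=\R^2_+$, then $\cP^S$ is the full polynomial ring and the classical Oka--Weil theorem applies. If $K\subset(\C^*)^2$, Corollary~\ref{cor:torus} gives condition (iii) of Theorem~\ref{thm:B}. So assume $\Gamma\neq\R^2_+$ and that $K$ meets an axis, say some $z\in K$ has $z_2=0$, hence $z_1\ne0$ since $0\notin K$. Then $J=\{1\}$ for this point, and Lemma~\ref{lem:strata}(2) gives two facts: $M_J$ spans $\R e_1$, so $e_1\in\Gamma$; and some lattice point $\alpha$ with $\alpha_2>0$ lies in $\Gamma$, so $me_1+e_2\in\Gamma$ for large $m$, as in the proof of that lemma. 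Thus (A1) holds with $I=\{1\}$. If $K$ also meets $\{z_1=0\}$, then symmetrically $e_2\in\Gamma$, so $\Gamma=\R^2_+$, which was excluded. Hence $K$ meets only the axis $\{z_2=0\}$.

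Next we check (A2) for $I=\{1\}$, or otherwise reach a rational polyhedral situation. In $\R^2_+$ a cone containing $e_1$ with nonempty interior has the form $\{x_2\le \lambda x_1\}$ with $0<\lambda\le\infty$, possibly with its upper boundary ray removed. Let $\alpha\in M_\Gamma\setminus\{0\}$. If $\alpha$ lies in the interior of $\Gamma$, or on the ray $\R_+e_1$, then $\alpha-\delta e_1\in\Gamma$ for small $\delta$. Otherwise $\alpha$ lies on the boundary ray $x_2=\lambda x_1$. If $\lambda$ is irrational, this ray contains no lattice point except $0$. If $\lambda$ is rational, or $\lambda=\infty$, then either the ray is missing, or the cone is closed and generated by $e_1$ and a rational vector, so it is rational polyhedral.

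In the irrational case (A2) holds, and Theorem~\ref{thm:anchor}(b) gives a finitely generated $\cB$ with compact $\widehat K^\cB$. In the rational closed case Theorem~\ref{thm:A} applies, and condition (v) of Theorem~\ref{thm:B} holds because the Oka--Weil property holds there and (i)$\Leftrightarrow$(v). In the rational case with the boundary ray missing, no lattice point lies on the upper boundary, so (A2) holds again and Theorem~\ref{thm:anchor}(b) applies.

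The only delicate point is this boundary classification for possibly nonclosed two-dimensional cones. The tools needed are that convex cones in the plane are sectors, together with the dichotomy between rational and irrational slopes. Everything else consists of invoking Theorems~\ref{thm:anchor}, \ref{thm:A} and~\ref{thm:B}.
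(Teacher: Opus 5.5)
Your proposal is correct and follows essentially the paper's argument. The ingredients are the same: the full orthant via classical Oka--Weil, the torus case via Corollary~\ref{cor:torus}, and, when exactly one coordinate vector $e_1$ lies in $\Gamma$, either the rational polyhedral case (Theorem~\ref{thm:A}) or (A1)--(A2) with $I=\{1\}$ and Theorem~\ref{thm:anchor}(b); your slope $\lambda$ plays the role of the paper's $\sigma$. The only difference is cosmetic: you split according to whether $K$ meets an axis and get $e_1\in\Gamma$ and (A1) from Lemma~\ref{lem:strata}, whereas the paper splits according to which $e_i$ lie in $\Gamma$ and uses $U_\Gamma\subset(\C^*)^2$ when neither does.
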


\begin{proof}
We may assume $K\ne\varnothing$, so that $\Gamma=\Gamma_S$ has nonempty
interior by Corollary~\ref{cor:goodcover}. If $e_1,e_2\in\Gamma$, then
$\Gamma=\R^2_+$ and the classical Oka--Weil theorem applies. If
$e_1,e_2\notin\Gamma$, then every nonzero $\alpha\in M_\Gamma$ has two
positive coordinates, every good exponent is nonzero, $U_\Gamma\subset
(\C^*)^2$, and Corollaries~\ref{cor:goodcover} and~\ref{cor:torus} apply.

By symmetry it remains to consider the case $e_1\in\Gamma$,
$e_2\notin\Gamma$. Then $x_1>0$ for every $x\in\Gamma\setminus\{0\}$, and
we put
\[
 \sigma=\sup\Bigl\{\frac{x_2}{x_1}:\ x\in\Gamma\setminus\{0\}\Bigr\}
 \in(0,\infty].
\]
For $0\le\tau<\sigma$ there is $x\in\Gamma$ with $\sigma'=x_2/x_1>\tau$,
and
\[
 (1,\tau)=\Bigl(1-\frac\tau{\sigma'}\Bigr)e_1+\frac\tau{\sigma'}\cdot
 \frac x{x_1}
\]
is a nonnegative combination of $e_1$ and $x$, so $(1,\tau)\in\Gamma$. If $\Gamma$ is generated by $e_1$ and a lattice
point, then $\Gamma$ is rational polyhedral, and Theorems~\ref{thm:A}
and~\ref{thm:B} apply. Otherwise we verify (A1) and (A2) for $I=\{1\}$.
Condition (A1) holds because
\[
 me_1+e_2=m\Bigl(1,\frac1m\Bigr)\in\Gamma\qquad\text{when }\frac1m<\sigma .
\]
Let $\alpha=(a,b)\in M_\Gamma\setminus\{0\}$, so that $a\ge1$ and
$b/a\le\sigma$. If $b/a<\sigma$, choose $0<\delta<a$ with
$b/(a-\delta)<\sigma$; then
\[
 \alpha-\delta e_1=(a-\delta)\Bigl(1,\frac b{a-\delta}\Bigr)\in\Gamma .
\]
If $b/a=\sigma$, then $\Gamma$ lies in $\{x_2\le\sigma x_1\}\cap\R^2_+$,
which is the cone generated by $e_1$ and the lattice point $\alpha$; since
both lie in $\Gamma$, this is the excluded rational polyhedral case.
Theorem~\ref{thm:anchor}(b) completes the proof.
\end{proof}

\begin{example}\label{ex:nonclosed}
Let
\[
 S=\bigl\{x\in\R^2_+:\ 0\le x_1\le1,\ x_2\le x_1(1-x_1)\bigr\},
\]
which is compact and convex because $x_1(1-x_1)$ is concave. A point
$(x_1,\tau x_1)$ with $x_1>0$ lies in $S$ exactly when $x_1\le1-\tau$, so
\[
 \R_+S=\{0\}\cup\{x\in\R^2:\ x_1>0,\ 0\le x_2<x_1\},\qquad
 M_{\R_+S}=\{0\}\cup\{(a,b)\in\N^2:\ a>b\}.
\]
The cone is not closed, and $\cP^S$ is not finitely generated, since the
lattice points $(k+1,k)$ are indecomposable in $M_{\R_+S}$. By
Corollary~\ref{cor:dimtwo} the Oka--Weil theorem holds on every compact
$S$-convex subset of $\C^2$. An example meeting the coordinate axis
$\{z_2=0\}$ is
\[
 \bigl\{z\in\C^2:\ |z_1-2|\le\tfrac12,\ |z_1^2z_2|\le1\bigr\},
\]
which the map $(z_1,z_2)\mapsto(z_1,z_1^2z_2)$ identifies with a closed
bidisc.
\end{example}

A compact set lying in a coordinate subspace can be treated in that
subspace. We write $\C^J=\{z\in\C^n:\ z_i=0\text{ for }i\notin J\}$,
identified with $\C^{|J|}$. The restriction $p\mapsto p|_{\C^J}$ maps
$\cP^S$ onto $\cP^{S_J}$, with a right inverse given by regarding a
polynomial on $\C^J$ as a polynomial on $\C^n$ independent of the
variables outside $J$.

The following proposition is the approximation counterpart of the
restriction of hulls and extremal functions to coordinate strata in
\cite[Proposition~3.5]{Sig}.

\begin{proposition}\label{prop:face}
Let $S\subset\R^n_+$ be compact and convex with $0\in S$, let
$J\subset\{1,\ldots,n\}$ be nonempty, and let $K\subset\C^J$ be a compact
$S$-convex set. Then $K$ is $S_J$-convex as a subset of $\C^J$, and if the
Oka--Weil property for $\cP^{S_J}$ holds on $K$ in $\C^J$, then the
Oka--Weil property for $\cP^S$ holds on $K$ in $\C^n$. In particular, the
Oka--Weil property holds on every compact $S$-convex subset of a
coordinate subspace of dimension at most two.
\end{proposition}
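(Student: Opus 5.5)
The plan rests on the restriction map $p\mapsto p|_{\C^J}$ and its right inverse. Write $\iota:\cP^{S_J}\to\cP^S$ for the extension of a polynomial on $\C^J$ to $\C^n$ that does not depend on the variables outside $J$. This map is well defined because a monomial $z^\alpha$ with $\alpha\in M_\Gamma\cap\R^J$ belongs to $\cP^S$. Restriction maps $\cP^S$ onto $\cP^{S_J}$: a monomial $z^\alpha$ with $\alpha\notin\R^J$ restricts to $0$, and the remaining monomials restrict to elements of $\cP^{S_J}$.

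\emph{Step 1: $K$ is $S_J$-convex in $\C^J$.} Let $w\in\C^J$ lie in the $\cP^{S_J}$-hull of $K$ taken in $\C^J$. For $p\in\cP^S$ we have $p|_{\C^J}\in\cP^{S_J}$, and both $w$ and $K$ lie in $\C^J$. Hence
\[
 |p(w)|=|p|_{\C^J}(w)|\le\|p|_{\C^J}\|_K=\|p\|_K .
\]
So $w\in\widehat K^S=K$, and the $S_J$-hull of $K$ equals $K$.

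\emph{Step 2: transfer of approximation.} Let $f$ be holomorphic on an open set $\Omega\supset K$ in $\C^n$. Then $f|_{\Omega\cap\C^J}$ is holomorphic on a neighbourhood of $K$ in $\C^J$. By hypothesis there are $q_m\in\cP^{S_J}$ with $q_m\to f$ uniformly on $K$. Since $K\subset\C^J$, the extensions satisfy $\iota(q_m)=q_m$ on $K$, so $\iota(q_m)\in\cP^S$ converge uniformly to $f$ on $K$. Steps 1 and 2 are routine.

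\emph{Step 3: the final assertion.} Let $K$ be a compact $S$-convex subset of a coordinate subspace of dimension at most two. Choose $J$ with $K\subset\C^J$ and $|J|\le2$, and work with $S_J$ in $\C^{|J|}$. By Step 1, $K$ is $S_J$-convex there, and $S_J$ is compact and convex with $0\in S_J$.
\begin{itemize}
\item If $|J|=2$, Corollary~\ref{cor:dimtwo} gives the Oka--Weil property for $\cP^{S_J}$ on $K$.
\item If $|J|=1$, $S_J$ is a segment $[0,a]e_j$. If $a=0$, then $\cP^{S_J}=\C$ and no nonempty compact set is convex, so $K$ is empty. If $a>0$, then $\cP^{S_J}=\C[z_j]$, and the classical one-variable Oka--Weil (Runge) theorem for polynomially convex compact sets applies.
\item The case $J=\varnothing$, i.e.\ $K\subset\{0\}$, is trivial: constants suffice.
\end{itemize}
Step 2 then lifts the approximation back to $\C^n$.

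The main point needing care is that the dimension-two reduction genuinely lands in Corollary~\ref{cor:dimtwo}. This requires checking that $S_J$, viewed in $\R^2_+$, is again compact convex with $0\in S_J$, which holds because $S_J=S\cap\R^J$. Beyond this, only Step 1, which relies on restriction being onto, has any substance.
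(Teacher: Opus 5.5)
Your proof is correct and is essentially the paper's argument. You get the hull statement by restricting elements of $\cP^S$ to $\C^J$, the approximation by regarding elements of $\cP^{S_J}$ as elements of $\cP^S$, and the case $|J|\le 2$ from Corollary~\ref{cor:dimtwo} or the one-variable theorem. The only difference is cosmetic: you rule out $S_J=\{0\}$ directly (the hull of a nonempty set is then all of $\C$), where the paper cites Corollary~\ref{cor:goodcover}. Also, Step~1 needs only that restriction maps $\cP^S$ into $\cP^{S_J}$; it is Step~2 that uses the embedding $\iota$.
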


\begin{proof}
For $p\in\cP^S$ we have $\|p|_{\C^J}\|_K=\|p\|_K$. If $w$ lies in the
$\cP^{S_J}$-hull of $K$ in $\C^J$, then
\[
 |p(w)|\le\|p\|_K\qquad\text{for all }p\in\cP^S,
\]
so $w\in\widehat K^S=K$. If $f$ is holomorphic near $K$ in $\C^n$, then
$f|_{\C^J}$ is holomorphic near $K$ in $\C^J$; approximating it on $K$ by
elements of $\cP^{S_J}$ and regarding these as elements of $\cP^S$ gives
the second assertion. If $|J|=2$, Corollary~\ref{cor:dimtwo} applies in
$\C^J$. If $|J|=1$, say $J=\{j\}$, Corollary~\ref{cor:goodcover} in
$\C^J$ shows that $\R_+S_J$ is the ray $\R_+e_j$, so that
$\cP^{S_J}=\C[z_j]$ and the classical one-variable theorem applies.
\end{proof}

Theorem~\ref{thmC} follows from Corollaries~\ref{cor:dimtwo}
and~\ref{cor:torus}, Theorem~\ref{thm:anchor}(b) and
Proposition~\ref{prop:face}.

\section{Applications: Oka--Stolzenberg criteria and rational
approximation}\label{sec:applications}

Stolzenberg \cite[(1.2)]{Stolz} showed that a point $y$ lies outside the
polynomial hull of a compact set $L\subset\C^N$ if and only if $y$ lies on
an algebraic hypersurface that can be moved continuously to infinity
without meeting $L$, and that $y$ lies outside the rational hull if and
only if a single algebraic hypersurface passes through $y$ and misses $L$
\cite[(1.1)]{Stolz}. The second statement is immediate from the
definition, and its analogue for $\cP^S$ is the description of
$\cR_S(K)$ given in Section~\ref{sec:prelim}. The first statement is a
consequence of the Oka--Weil theorem, and its sparse analogue follows from
the results above. We say that a family $(p_t)_{t\ge0}$ of polynomials is
\emph{continuous} if $(t,w)\mapsto p_t(w)$ is continuous on
$[0,\infty)\times\C^n$, and that the hypersurfaces
$Z_t=\{p_t=0\}$ \emph{escape} if for every compact $B\subset\C^n$
\[
 Z_t\cap B=\varnothing\qquad\text{for all sufficiently large }t .
\]

The proof of the next theorem is Stolzenberg's proof of
\cite[(1.2)]{Stolz}; the only change is that the Oka--Weil theorem on the
polynomial hull is replaced by the Oka--Weil property on the
$\cP^\Gamma$-hull, which is available by Theorem~\ref{thm:B}.

\begin{theorem}\label{thm:OS}
Let $\Gamma\subset\R^n_+$ be a convex cone and let $K\subset\C^n$ be a
compact set such that $\widehat K^{\cB_0}$ is compact for some finitely
generated monomial algebra $\cB_0\subset\cP^\Gamma$. For $z\in\C^n$ the
following are equivalent.
\begin{enumerate}[label=\textup{(\roman*)}]
\item $z\notin\widehat K^\Gamma$.
\item There is a continuous family $(p_t)_{t\ge0}$ in $\cP^\Gamma$ such
that $p_0(z)=0$, no $p_t$ has a zero on $K$, and the hypersurfaces
$\{p_t=0\}$ escape.
\end{enumerate}
In \textup{(ii)} one may take $p_t=p-(1+t)$ with $p\in\cP^\Gamma$; in
particular, all $p_t$ lie in one space $\cP^S_m$ when $\Gamma=\Gamma_S$.
The implication \textup{(i)$\Rightarrow$(ii)} holds for every compact
$K$.
\end{theorem}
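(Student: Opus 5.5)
We follow Stolzenberg's argument. Put $H=\widehat K^\Gamma$.

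\emph{(i)$\Rightarrow$(ii), for arbitrary compact $K$.} If $z\notin H$, choose $q\in\cP^\Gamma$ with $|q(z)|>\|q\|_K$. Multiply $q$ by a constant so that $q(z)=1$ and $\|q\|_K<1$. Put $p=q$ and $p_t=p-(1+t)$. Then:
\begin{itemize}
\item $p_0(z)=p(z)-1=0$;
\item for $t\ge0$ and $x\in K$ we have $|p(x)|<1\le 1+t$, so $p_t$ has no zero on $K$;
\item on a compact set $B$, $p$ is bounded by some $C_B$, so $\{p_t=0\}=\{p=1+t\}$ misses $B$ once $1+t>C_B$; hence the hypersurfaces escape;
\item $(t,w)\mapsto p(w)-1-t$ is continuous;
\item if $p\in\cP^S_m$, then $p_t\in\cP^S_m$ for all $t$, because $0\in mS$ and the constants belong to $\cP^S_m$.
\end{itemize}
This gives the final assertions of the theorem. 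No compactness hypothesis is used here.

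\emph{(ii)$\Rightarrow$(i).} Suppose $z\in H$ and a family $(p_t)$ as in (ii) exists; we derive a contradiction.

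By Remark~\ref{rem:nonconvex}, the hypothesis on $\cB_0$ makes $H$ compact. The same remark, via Theorem~\ref{thm:B} applied to the $\Gamma$-convex set $H$, gives the Oka--Weil property on $H$, and by (i)$\Rightarrow$(ii) of Theorem~\ref{thm:B} every character of $A_\Gamma(H)$ is a point evaluation at a point of $H$.

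Let $T=\{t\ge0:\ p_t \text{ has no zero on } H\}$. Since the hypersurfaces escape and $H$ is compact, $T$ contains all large $t$. Since $H$ is compact and $(t,w)\mapsto p_t(w)$ is continuous, $T$ is open in $[0,\infty)$. Also $0\notin T$, because $z\in H$ and $p_0(z)=0$.

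Let $t_0=\inf\{s:\ [s,\infty)\subset T\}$. Then $t_0\notin T$, since $t_0=0$ or $T$ is open, so $p_{t_0}$ vanishes at some point of $H$. Take $t_n\downarrow t_0$ with $t_n\in T$. We show that for $t\in T$ no $p_t$ can acquire a zero on $H$ in the limit; this is the key step.

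The Stolzenberg device is the following. For $t\in T$, the function $1/p_t$ is holomorphic near $H$, so by the Oka--Weil property it lies in $A_\Gamma(H)$; equivalently, $p_t$ is invertible in $A_\Gamma(H)$. On the other hand, $p_t$ has no zero on $K$, and we must exploit $\|\cdot\|_H=\|\cdot\|_K$ on $A_\Gamma(H)$. Since $p_t$ has no zero on the compact set $K$, we have $\min_K|p_t|\ge c>0$ uniformly for $t$ near $t_0$. If $1/p_t=\lim f_j$ in $A_\Gamma(H)$ with $f_j\in\cP^\Gamma$, then $\|f_j\|_H=\|f_j\|_K$, so
\[
\|1/p_t\|_H=\|1/p_t\|_K\le 1/c .
\]
Hence $|p_t|\ge c$ on $H$ for $t\in T$ near $t_0$. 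Letting $t_n\to t_0$ gives $|p_{t_0}|\ge c$ on $H$, contradicting $p_{t_0}(w)=0$ for some $w\in H$. Therefore no such family exists when $z\in H$, that is, (ii) implies (i).

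The main obstacle is this step: obtaining the uniform lower bound on $H$ from the one on $K$. It requires that $1/p_t$ lie in the uniform closure of $\cP^\Gamma$ on the whole hull $H$, not just on $K$, and this is exactly where the compactness of $\widehat K^{\cB_0}$ and Theorem~\ref{thm:B} are needed.
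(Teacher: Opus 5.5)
Your proof is correct and is essentially the paper's own argument. The same family $p_t=p-(1+t)$ gives (i)$\Rightarrow$(ii). In the converse, your $t_0$ coincides with the paper's $t_*=\max\{t\ge0:\ \{p_t=0\}\cap H\ne\varnothing\}$. After that, the Oka--Weil property on the compact hull $H$ gives $\|1/p_t\|_H=\|1/p_t\|_K$ for $t>t_0$. This quantity stays bounded near $t_0$ by joint continuity on $[t_0,t_0+1]\times K$, which contradicts the zero of $p_{t_0}$ on $H$.

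The only cosmetic differences are that the paper evaluates at that zero instead of taking the norm on $H$, and that your remark about characters of $A_\Gamma(H)$ is never used.
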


\begin{proof}
(i)$\Rightarrow$(ii). Choose $p\in\cP^\Gamma$ with $p(z)=1>\|p\|_K$; this
is possible for every compact $K$. The polynomials
\[
 p_t=p-(1+t),\qquad t\ge0,
\]
satisfy $p_0(z)=0$ and $|p_t|\ge1+t-\|p\|_K>0$ on $K$, and $\{p_t=0\}$
escapes because $p$ is bounded on compact sets. Since $\cP^S_m$ contains
the constants, $p_t\in\cP^S_m$ whenever $p\in\cP^S_m$.

(ii)$\Rightarrow$(i). By Remark~\ref{rem:nonconvex},
$H=\widehat K^\Gamma$ is compact and the Oka--Weil property holds on $H$.
Suppose that $z\in H$ and let
\[
 T=\bigl\{t\ge0:\ \{p_t=0\}\cap H\ne\varnothing\bigr\}.
\]
Then $0\in T$ because $z\in H$, and $T$ is bounded because the
hypersurfaces escape and $H$ is compact. Moreover $T$ is closed: if
$t_l\in T$, $t_l\to t$, and $y_l\in H$ satisfy $p_{t_l}(y_l)=0$, then a
subsequence of $(y_l)$ converges to some $y\in H$, and $p_t(y)=0$ by the
joint continuity of $(t,w)\mapsto p_t(w)$. Let $t_*=\max T$ and choose $y\in H$ with
$p_{t_*}(y)=0$. For $t>t_*$ the function $1/p_t$ is holomorphic in a
neighbourhood of $H$, so it is a uniform limit on $H$ of elements
$a_j\in\cP^\Gamma$. Since $y\in H$ and $|a_j(y)|\le\|a_j\|_K$, we get
\[
 \frac1{|p_t(y)|}\le\sup_K\frac1{|p_t|}\qquad\text{for all }t>t_* .
\]
The right-hand side is bounded for $t\in(t_*,t_*+1]$, because $p_t(w)\ne0$
on the compact set $[t_*,t_*+1]\times K$, whereas the left-hand side tends to $+\infty$ as $t\downarrow t_*$,
because $p_t(y)\to p_{t_*}(y)=0$ and $p_t(y)\ne0$ for $t>t_*$ (indeed
$y\in H$ and $t\notin T$). This contradiction proves (i).
\end{proof}

\begin{remark}\label{rem:OSlocal}
(1) Without a compact finite stage the hull may be unbounded, and the
criterion must be formulated in ambient spaces. If
$\cB_1\subset\cB_2\subset\cdots$ are finitely generated monomial algebras
with union $\cP^\Gamma$ and $\pi_j$ are the corresponding monomial maps,
then
\[
 \widehat K^\Gamma=\bigcap_j\pi_j^{-1}\bigl(\widehat{\pi_j(K)}\bigr),
\]
and Stolzenberg's criterion applies to each $\pi_j(K)$ in $\C^{N_j}$. The
preimages of escaping hypersurfaces in $\C^{N_j}$ need not escape in
$\C^n$, because $\pi_j$ may have unbounded fibres.

(2) For a compact Reinhardt set $K\subset(\C^*)^n$ and
$z\in(\C^*)^n\setminus\widehat K^\Gamma$, Proposition~\ref{prop:reinhardt}
gives $\alpha\in M_\Gamma$ with $|z^\alpha|>\|z^\alpha\|_K$, and the
hypersurfaces
\[
 \bigl\{w\in\C^n:\ w^\alpha=(1+t)\,z^\alpha\bigr\},\qquad t\ge0,
\]
form an explicit escaping family of the kind in Theorem~\ref{thm:OS}(ii),
even when $\widehat K^\Gamma$ is unbounded.
\end{remark}

We turn to rational approximation. Rational hulls and their
hypersurface description are due to Stolzenberg \cite[(1.1)]{Stolz}, and
the rational Oka--Weil theorem is classical \cite[Chapter~1]{Stout}. Here
the denominators are allowed to
range over $\cP^\Gamma$, so the degree parameter is not fixed.

\begin{theorem}\label{thm:rational}
Let $\Gamma\subset\R^n_+$ be a convex cone and let $K\subset\C^n$ be a
nonempty compact set with $\cR_\Gamma(K)=K$. Assume that one of the
following conditions holds:
\begin{enumerate}[label=\textup{(\alph*)}]
\item $\Gamma$ is rational polyhedral;
\item $\cR_{\cB_0}(K)$ is compact for some finitely generated monomial
algebra $\cB_0\subset\cP^\Gamma$; this holds in particular when
$\widehat K^{\cB_0}$ is compact;
\item $K\subset(\C^*)^n$.
\end{enumerate}
Then every function holomorphic in a neighbourhood of $K$ is a uniform
limit on $K$ of quotients $p/q$ with $p,q\in\cP^\Gamma$ and $q$ without
zeros on $K$. Moreover, if $\Gamma$ has nonempty interior, then
\[
 \cR_\Gamma(L)=\cR(L)\qquad\text{for every compact }L\subset(\C^*)^n .
\]
\end{theorem}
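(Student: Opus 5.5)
Case (a) is Theorem~\ref{thm:A}(b), so nothing new is needed there. For case (b) I would run the rational version of (v)$\Rightarrow$(i) in Theorem~\ref{thm:B}. Let $f$ be holomorphic on an open $\Omega\supset K$. Corollary~\ref{cor:goodcover} applies because $\cR_\Gamma(K)=K$, and gives $\beta^1,\ldots,\beta^r\in G_\Gamma$ with $K\subset V=\bigcup_k\{z^{\beta^k}\ne0\}$; shrink $\Omega$ so that $\Omega\subset V$. Build an increasing sequence of finitely generated monomial algebras $\cB_j\supset\cB_0$ containing all $z^{\beta^k}$ and $z^{\beta^k+e_i}$, with $\bigcup_j\cB_j=\cP^\Gamma$, and put $K_j=\cR_{\cB_j}(K)$. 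These are closed subsets of the compact set $\cR_{\cB_0}(K)$, they decrease, and $\bigcap_jK_j=\cR_\Gamma(K)=K$; the last equality holds because a point outside $\cR_\Gamma(K)$ is witnessed by a single $b\in\cP^\Gamma$, which lies in some $\cB_j$. By compactness $K_j\subset\Omega\subset V\subset U_{\cB_j}$ for some $j$.

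Next I would check that $\cR_{\cB_j}(K_j)=K_j$. Here $b(K)\subset b(K_j)$ holds since $K\subset K_j$, and $b(K_j)\subset b(K)$ holds by the definition of $K_j$, so $b(K_j)=b(K)$ for every $b\in\cB_j$ and the two hulls coincide. Theorem~\ref{thm:chart}(b) then approximates $f$ on $K_j$, hence on $K$, by quotients $b/c$ with $b,c\in\cB_j$ and $c\ne0$ on $K_j\supset K$. The side remark in (b) follows from $\cR_{\cB_0}(K)\subset\widehat K^{\cB_0}$: a closed subset of a compact set is compact.

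For case (c), with $K\subset(\C^*)^n$ and $\cR_\Gamma(K)=K$, I would first prove the final identity and reduce to the classical rational Oka--Weil theorem. By Corollary~\ref{cor:goodcover}, $\Gamma$ has nonempty interior; as in Corollary~\ref{cor:torus}, pick a lattice point $\gamma$ in the interior, so all its coordinates are positive and $k\gamma+e_i\in\Gamma$ for large $k$. Then every coordinate is $z_i=z^{k\gamma+e_i}/z^{k\gamma}$, and more generally every monomial $z^\alpha$ equals $z^{\alpha+m\gamma}/z^{m\gamma}$ with both exponents in $M_\Gamma$ for large $m$. So every polynomial $P$ can be written $P=b/z^{m\gamma}$ with $b\in\cP^\Gamma$. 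Now take a compact $L\subset(\C^*)^n$. One inclusion, $\cR(L)\subset\cR_\Gamma(L)$, is immediate. For the other, let $w\notin\cR(L)$, so some polynomial $P$ has $P(w)=0$ and no zero on $L$. If $w\in(\C^*)^n$, then $b=z^{m\gamma}P\in\cP^\Gamma$ vanishes at $w$ and has no zero on $L$, because $z^{m\gamma}\ne0$ on $L$. If instead $w$ has some zero coordinate, then $b=z^{\gamma}$ already vanishes at $w$ and has no zero on $L$. In both cases $w\notin\cR_\Gamma(L)$, which proves the identity. Applied to $L=K$ it shows that $K$ is rationally convex. The classical rational Oka--Weil theorem then gives approximation by $P/Q$ with $Q\ne0$ on $K$, and
\[
 \frac PQ=\frac{z^{m\gamma}P}{z^{m\gamma}Q}
\]
with a common $m$ is a quotient of elements of $\cP^\Gamma$ whose denominator has no zero on $K$.

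The step I expect to need most care is the compactness argument in case (b): one must confirm that the rational hulls $K_j$ really decrease and intersect to $K$, and that the hypothesis $K_j\subset U_{\cB_j}$ of Theorem~\ref{thm:chart}(b) is satisfied, which is why the good monomials are put into every $\cB_j$.
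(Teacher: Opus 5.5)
Your proposal is correct and follows the paper's proof essentially step by step: case (a) is quoted from Theorem~\ref{thm:A}(b); case (b) is the rational version of (v)$\Rightarrow$(i) in Theorem~\ref{thm:B}, with $K_j=\cR_{\cB_j}(K)$, good monomials placed in every $\cB_j$, and Theorem~\ref{thm:chart}(b) applied once $K_j\subset\Omega\subset V\subset U_{\cB_j}$; and case (c), together with the identity $\cR_\Gamma(L)=\cR(L)$, comes from an interior lattice point with positive coordinates and multiplication by its high powers. As a small aside, rational convexity of $K$ in case (c) already follows from $K\subset\cR(K)\subset\cR_\Gamma(K)=K$, without the identity.
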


\begin{proof}
Case (a) is Theorem~\ref{thm:A}(b). In case (b) we repeat the proof of
(v)$\Rightarrow$(i) in Theorem~\ref{thm:B} with
\[
 K_j=\cR_{\cB_j}(K),\qquad j=1,2,\ldots
\]
These sets are closed subsets of the compact set $\cR_{\cB_0}(K)$, they
decrease, and $\bigcap_jK_j=\cR_\Gamma(K)=K$. Moreover
$\cR_{\cB_j}(K_j)=K_j$, because $b(K_j)\subset b(K)$ for $b\in\cB_j$. Let $f$ be holomorphic on an open set $\Omega\supset K$. By
Corollary~\ref{cor:goodcover}, $K$ lies in a finite union $V$ of good
charts $\{z^{\beta^k}\ne0\}$, and we may assume $\Omega\subset V$ and that
every $\cB_j$ contains the monomials $z^{\beta^k}$ and $z^{\beta^k+e_i}$.
As in the proof of Theorem~\ref{thm:B}, for large $j$ we have
$K_j\subset\Omega\subset V\subset U_{\cB_j}$, and
Theorem~\ref{thm:chart}(b), applied to $K_j$ and $\cB_j$, shows that $f$
is a uniform limit on $K_j$, hence on $K$, of quotients $b/c$ with
$b,c\in\cB_j\subset\cP^\Gamma$ and $c\ne0$ on $K_j\supset K$.

For (c) and the last assertion, $\Gamma$ has nonempty interior by
Corollary~\ref{cor:goodcover} in case (c) and by assumption otherwise.
Choose a lattice point $\beta$ in the interior of $\Gamma$; its
coordinates are positive. For a polynomial $p$ and a large integer $k$,
all exponents of $z^{k\beta}p$ lie in $\Gamma$, since
\[
 \alpha+k\beta=k\Bigl(\beta+\frac\alpha k\Bigr).
\]
Let $L\subset(\C^*)^n$ be compact. A point with a vanishing coordinate is
not in $\cR_\Gamma(L)$, since $z^\beta$ vanishes there and not on $L$. If
$w\in(\C^*)^n\setminus\cR(L)$, a polynomial $p$ vanishes at $w$ and not on
$L$, and so does $z^{k\beta}p\in\cP^\Gamma$. Hence
$\cR_\Gamma(L)\subset\cR(L)$, and the opposite inclusion is clear. In case
(c), $K=\cR(K)$ is rationally convex, so a function holomorphic near $K$
is a uniform limit of rational functions $p/q$ with $q\ne0$ on $K$, and
\[
 \frac pq=\frac{z^{k\beta}p}{z^{k\beta}q},\qquad
 z^{k\beta}p,\ z^{k\beta}q\in\cP^\Gamma\ \text{for large }k. \qedhere
\]
\end{proof}

The difference between the two hulls is already visible in the simplest
example.

\begin{example}\label{ex:circle}
Let $S=\conv\{0,e_1,e_1+e_2\}$, so that $\cP^S=\C[z_1,z_1z_2]$, and let
\[
 K=\bigl\{(2,z_2):\ |z_2|=1\bigr\}.
\]
Under $\pi=(z_1,z_1z_2)$ the set $K$ is mapped onto a circle in the line
$\{w_1=2\}$, and \eqref{eq:pullback} gives
\[
 \widehat K^S=\bigl\{(2,z_2):\ |z_2|\le1\bigr\},\qquad \cR_S(K)=K .
\]
The function $1/z_2$ is holomorphic near $K$ and equals the admissible
quotient
\[
 \frac1{z_2}=\frac{z_1}{z_1z_2}
\]
there, but it is not a uniform limit on $K$ of elements of $\cP^S$: such
limits extend holomorphically to the disc $\widehat K^S$ by the maximum
principle. The point $(2,0)$ lies on the single hypersurface
$\{z_1z_2=0\}$, which misses $K$; by Theorem~\ref{thm:OS}, which applies
because $\widehat K^S$ is compact, no escaping family of hypersurfaces
$\{p_t=0\}$, $p_t\in\cP^S$, starts at $(2,0)$ without meeting $K$.
Similarly, for the torus $\T^2$ Theorem~\ref{thm:rational} gives
\[
 \cR_S(\T^2)=\cR(\T^2)=\T^2,
\]
while $\widehat{\T^2}^{\,S}$ is unbounded (Example~\ref{ex:triangle}).
\end{example}


\begin{thebibliography}{99}

\bibitem{BL}
L.~Bos and N.~Levenberg,
\emph{Bernstein--Walsh theory associated to convex bodies and
applications to multivariate approximation theory},
Comput. Methods Funct. Theory \textbf{18} (2018), 361--388.

\bibitem{CLS}
D.~A.~Cox, J.~B.~Little and H.~K.~Schenck,
\emph{Toric Varieties}, Graduate Studies in Mathematics \textbf{124},
American Mathematical Society, Providence, RI, 2011.

\bibitem{Gam}
T.~W.~Gamelin, \emph{Uniform Algebras}, Prentice-Hall, Englewood Cliffs,
NJ, 1969.

\bibitem{Hor}
L.~Hörmander,
\emph{An Introduction to Complex Analysis in Several Variables},
3rd ed., North-Holland, Amsterdam, 1990.

\bibitem{MSS}
B.~S.~Magnússon, Á.~E.~Sigurðardóttir and R.~Sigurðsson,
\emph{Polynomials with exponents in compact convex sets and associated
weighted extremal functions: the Siciak--Zakharyuta theorem},
Complex Anal. Synerg. \textbf{10} (2024), Paper No.~12.

\bibitem{MSSS}
B.~S.~Magnússon, Á.~E.~Sigurðardóttir, R.~Sigurðsson and B.~Snorrason,
\emph{Polynomials with exponents in compact convex sets and associated
weighted extremal functions -- fundamental results},
Ann. Polon. Math. \textbf{133} (2024), 37--70.

\bibitem{MSiS}
B.~S.~Magnússon, R.~Sigurðsson and B.~Snorrason,
\emph{Polynomials with exponents in compact convex sets and associated
weighted extremal functions: the Bernstein--Walsh--Siciak theorem},
arXiv:2306.02486.

\bibitem{Oka}
K.~Oka,
\emph{Sur les fonctions analytiques de plusieurs variables. I.
Domaines convexes par rapport aux fonctions rationnelles},
J. Sci. Hiroshima Univ. Ser.~A \textbf{6} (1936), 245--255.

\bibitem{Sig}
Á.~E.~Sigurðardóttir,
\emph{Holomorphic approximation by polynomials with exponents restricted
to a convex cone}, arXiv:2409.12132v3 (2025).

\bibitem{Stolz}
G.~Stolzenberg,
\emph{Polynomially and rationally convex sets},
Acta Math. \textbf{109} (1963), 259--289.

\bibitem{Stout}
E.~L.~Stout, \emph{Polynomial Convexity}, Progress in Mathematics
\textbf{261}, Birkhäuser, Boston, 2007.

\bibitem{Tref}
L.~N.~Trefethen,
\emph{Multivariate polynomial approximation in the hypercube},
Proc. Amer. Math. Soc. \textbf{145} (2017), 4837--4844.

\bibitem{Weil}
A.~Weil,
\emph{L'intégrale de Cauchy et les fonctions de plusieurs variables},
Math. Ann. \textbf{111} (1935), 178--182.

\end{thebibliography}
\end{document}